\definecolor{darkblue}{rgb}{0.0,0,0.7} 
\newcommand{\darkblue}{\color{darkblue}} 
\definecolor{darkred}{rgb}{0.7,0,0} 
\definecolor{lightgrey}{rgb}{0.7,0.7,0.7} 
\def\ZZ{\mathbb{Z}}
\def\RR{\mathbb{R}}
\def\CC{\mathbb{C}}
\def\NN{\mathbb{N}}
\newcommand{\Hilb}{\mathrm{Hilb}}
\newcommand{\spn}{{\mathrm{span}}}
\newcommand{\fix}{{\mathrm{fix}}}
\newcommand{\g}{{G}}
\newcommand{\n}{{N}}
\newcommand{\CN}{{\mathcal{C}_N}}
\newcommand{\UN}{{U^N}}
\newcommand{\UNs}{{U^N_\sigma}}
\newcommand{\UNd}{{(U^N)^*}}
\newcommand{\UNsd}{{(U^N_\sigma)^*}}
\newcommand{\UNsde}{{(U^N_\sigma)^*_e}}
\newcommand{\UNM}{{U_M^N}}
\newcommand{\UNMd}{{(U_M^N)^*}}
\newcommand{\UNMe}{{(U_M^N)_e}}
\newcommand{\UNMde}{{(U_M^N)^*_e}}
\newcommand{\UGstd}{{(\tilde{U}^G_\sigma)^*}}
\newcommand{\CG}{{\mathcal{C}_G}}
\newcommand{\IG}{{I_G^+}}
\newcommand{\IW}{{I_W^+}}
\newcommand{\UG}{{U^G}}
\newcommand{\UGs}{{U^G_\sigma}}
\newcommand{\UGd}{{(U^G)^*}}
\newcommand{\UGsd}{{(U^G_\sigma)^*}}
\newcommand{\UGM}{{U_M^G}}
\newcommand{\UGMd}{{(U_M^G)^*}}
\newcommand{\CH}{{\mathcal{C}_H}}
\newcommand{\Vs}{{V^\sigma}}
\newcommand{\Vsd}{{(V^\sigma)^*}}
\newcommand{\Vd}{{V^*}}
\newcommand{\ENM}{{\mathcal{E}_M^N}}
\newcommand{\ENs}{{\mathcal{E}^N_\sigma}}
\newcommand{\h}{{H}}
\newcommand{\VN}{{E}}
\newcommand{\VNd}{{E^*}}
\newcommand{\VNdd}{{E^*_d}}
\newcommand{\VNs}{{E^\sigma}}
\newcommand{\VNsd}{{(E^\sigma)^*}}
\newcommand{\DN}{{\mathcal{D}_N}}
\newcommand{\fixed}{{\mathrm{fix}}}
\renewcommand{\mod}{{\text{ mod }}}
\renewcommand{\P}{\mathcal{P}}
\newtheorem{theorem}{Theorem}[section]
\newtheorem{proposition}[theorem]{Proposition}
\newtheorem{corollary}[theorem]{Corollary}
\newtheorem{lemma}[theorem]{Lemma}
\theoremstyle{definition}
\newtheorem{definition}[theorem]{Definition}
\newtheorem{example}[theorem]{Example}
\newtheorem{remark}[theorem]{Remark}
\DeclareFontFamily{U}{rcjhbltx}{}
\DeclareFontShape{U}{rcjhbltx}{m}{n}{<->rcjhbltx}{}
\DeclareSymbolFont{hebrewletters}{U}{rcjhbltx}{m}{n}
\DeclareMathSymbol{\lamed}{\mathord}{hebrewletters}{108}
\newcommand{\nathan}[1]{\todo[size=\tiny,color=green!30]{#1 \\ \hfill --- N.}}
\newcommand{\carlos}[1]{\todo[size=\tiny,color=orange!30]{#1 \\ \hfill --- C.}}
\newcommand{\defn}[1]{\emph{\darkblue #1}}
\newcommand\IfStringInList[2]{\IfSubStr{,#2,}{,#1,}}
\title[Normal Reflection Subgroups]
  {Normal Reflection Subgroups \\ of Complex Reflection Groups}
\author[C.~Arreche]{Carlos E. Arreche}
\address[C.~Arreche]{The University of Texas at Dallas}
\email{arreche@utdallas.edu}
\thanks{C.~Arreche was partially supported by NSF grant CCF-1815108.}
\author[N.~Williams]{Nathan F. Williams}
\address[N.~Williams]{The University of Texas at Dallas}
\email{nathan.f.williams@gmail.com}
\thanks{N.~Williams was partially supported by Simons Foundation award number 585380.}
\date{\today}
\keywords{}
\subjclass[2010]{Primary 20F55; Secondary 05E10}
\begin{document}

\maketitle

\begin{abstract}
We study normal reflection subgroups of complex reflection groups. Our approach leads to a refinement of a theorem of Orlik and Solomon to the effect that the generating function for fixed-space dimension over a reflection group is a product of linear factors involving generalized exponents. Our refinement gives a uniform proof and generalization of a recent theorem of the second author.
\end{abstract}

\section{Introduction}
\label{sec:intro}

\subsection{Lie Groups}
Hopf proved that the cohomology of a real connected compact Lie group $\mathcal{G}$ is a free exterior algebra on $r=\mathrm{rank}(\mathcal{G})$ generators of odd degree~\cite{hopf1964topologie}.  Its Poincar\'e series is therefore given by \begin{equation}\label{eq:hopf}\mathrm{Hilb}(H^*(\mathcal{G});q) = \prod_{i=1}^r (1+q^{2e_i+1}).\end{equation}
Chevalley presented these $e_i$ for the exceptional simple Lie algebras in his 1950 address at the International Congress of Mathematicians~\cite{chevalley1950betti}, and Coxeter recognized them from previous work with real reflection groups~\cite{coxeter1951product}.
This observation has led to deep relationships between the cohomology of $\mathcal{G}$ and the invariant theory of the corresponding Weyl group $W=N_\mathcal{G}(T)/T$, where $T$ is a maximal torus in $\mathcal{G}$~\cite{reeder1995cohomology,reiner2019invariant}---notably,
\[H^*(\mathcal{G}) \simeq \left(H^*(\mathcal{G}/T) \times H^*(T)\right)^W \simeq \left(S(V^*)/\IW \otimes \bigwedge V^*\right)^W\!\!\!\!\!,\] where $V=\mathrm{Lie}(T)$ is the reflection representation of $W$, $S(V^*)$ is the algebra of polynomial functions on $V$, and $\IW$ is the ideal generated by the $W$-invariant polynomials in $S(V^*)$ with no constant term.  For more details, we refer the reader to the wonderful survey~\cite{barcelo1994combinatorial}.

\subsection{Complex Reflection Groups}
It turns out that the $e_i$ in \Cref{eq:hopf} can be computed from the generating function for the dimension of the fixed space $\fixed(w):=\dim(\ker(1-w))$ for $w\in W$, via the remarkable formula:
\begin{equation}\sum_{w \in W} q^{\fixed(w)} = \prod_{i=1}^r (q+e_i).\label{eq:shephard_todd}\end{equation}

Shephard and Todd verified case-by-case that the same sum still factors when $W$ is replaced by a finite complex reflection group $G\subset\mathrm{GL}(V)$ acting by reflections on a complex vector space $V$ of dimension $r$~\cite[Theorem 5.3]{shephard1954finite}. The $e_i$ are now determined by the degrees $d_i$ of the fundamental invariants of $G$ on $V$ as $e_i=d_i-1$.  A case-free proof of this result was given by Solomon in~\cite{solomon1963invariants}, mirroring Hopf's result:  $\left(S(V^*) \otimes \bigwedge V^*\right)^G$ is a free exterior algebra over the ring $S(V^*)^G$ of $G$-invariant polynomials, which gives a factorization of the Poincar\'e series of the $G$-invariant differential forms

\begin{equation}
\label{eq:solomon}
\mathrm{Hilb}\left(\left(S(V^*) \otimes \bigwedge V^* \right)^G;q,u\right) = \prod_{i=1}^r \frac{1+u q^{e_i}}{1-q^{d_i}}.
\end{equation}
Computing the trace of the projection $\frac{1}{|G|}\sum_{g\in G} g$ to the subspace of $G$-invariants on $S(V^*) \otimes \bigwedge V^*$, specializing to $u=q(1-x)-1$, and taking the limit as $x \to 1$ gives the Shephard-Todd result in~\Cref{eq:shephard_todd}.

\subsection{Galois twists and cohomology}
More generally, the \defn{fake degree} of an $m$-dimensional simple $G$-module $M$ is the polynomial encoding the degrees in which $M$ occurs in the coinvariant algebra $S(V^*)/\IG\simeq \CG$: \begin{equation}f_M(q)=\sum_{i} ((\CG)_i,M)q^i =\sum_{i=1}^m q^{e_i(M)}.\label{eq:fake}\end{equation}  The fake degree of a reducible $G$-module is defined to be the sum of the fake degrees of its simple direct summands. The integers $e_i(M)$ in \Cref{eq:fake} are called the \defn{$M$-exponents of $G$}.

Letting $\zeta_G$ denote a primitive $|G|$-th root of unity, for $\sigma\in\mathrm{Gal}(\mathbb{Q}(\zeta_G)/\mathbb{Q})$ the \defn{Galois twist} $V^\sigma$ is the representation of $G$ obtained by applying $\sigma$ to its matrix entries. In~\cite{orlik1980unitary}, Orlik and Solomon gave a beautiful generalization of \Cref{eq:solomon,eq:shephard_todd} that takes into account these Galois twists (see~\Cref{sec:o-s_thm}). 

\begin{theorem}[{\cite[Thm.~3.3]{orlik1980unitary}}]
\label{thm:orlik_solomon}
Let $G\subset\mathrm{GL}(V)$ be a complex reflection group of rank $r$ and let $\sigma \in \mathrm{Gal}(\mathbb{Q}(\zeta_G)/\mathbb{Q})$. Then 
\[\sum_{g \in G} \left(\prod_{\lambda_i(g) \neq 1} \frac{1-\lambda_i(g)^\sigma}{1-\lambda_i(g)}\right) q^{\fix_{V} (g)} = \prod_{i=1}^r \left(q+e_i(V^\sigma) \right),\] where the $\lambda_i(g)$ are the eigenvalues of $g \in G$ acting on $V$.
\end{theorem}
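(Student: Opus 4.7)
The plan is to mimic the derivation of the Shephard-Todd identity~\eqref{eq:shephard_todd} from Solomon's formula~\eqref{eq:solomon} sketched in the excerpt, but with the exterior algebra $\bigwedge V^*$ replaced by its Galois twist $\bigwedge (V^\sigma)^*$.

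The first and crucial step is to establish a \emph{twisted Solomon identity}
\[
\mathrm{Hilb}\bigl((S(V^*) \otimes {\textstyle\bigwedge}\,(V^\sigma)^*)^G;\, q, u\bigr) \;=\; \prod_{i=1}^r \frac{1 + u\,q^{e_i(V^\sigma)}}{1 - q^{d_i}}.
\]
Using the Chevalley-Shephard-Todd decomposition $S(V^*) \cong S(V^*)^G \otimes \CG$ of graded $G$-modules, the left-hand side factors as $\prod_i (1-q^{d_i})^{-1} \cdot \sum_k u^k f_{\bigwedge^k V^\sigma}(q)$, where $f_M$ is the fake degree~\eqref{eq:fake}. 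The twisted Solomon identity therefore reduces to the \emph{twisted fake-degree identity}
\[
\sum_{k=0}^r u^k f_{\bigwedge^k V^\sigma}(q) \;=\; \prod_{i=1}^r \bigl(1 + u\,q^{e_i(V^\sigma)}\bigr), \qquad (\star)
\]
asserting that $f_{\bigwedge^k V^\sigma}(q)$ is the $k$-th elementary symmetric polynomial evaluated at $q^{e_1(V^\sigma)}, \ldots, q^{e_r(V^\sigma)}$.

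The remaining steps are mostly mechanical: by Molien's formula the same Hilbert series also equals $\tfrac{1}{|G|}\sum_{g \in G} \prod_{i=1}^r (1 + u\lambda_i(g)^\sigma)/(1-q\lambda_i(g))$ (after the usual reindexing $g \mapsto g^{-1}$ to pass from eigenvalues on $V^*$ to those on $V$); equating this with the product side of the twisted Solomon identity, substituting $u = q(1-x) - 1$, and taking $x \to 1$ as in the excerpt converts the factors with $\lambda_i(g) = 1$ into $q$'s and those with $\lambda_i(g) \neq 1$ into $(1-\lambda_i(g)^\sigma)/(1-\lambda_i(g))$; after cancelling $\prod_i (1-q^{d_i})$ from both sides one reads off \Cref{thm:orlik_solomon}.

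The main obstacle is $(\star)$. For $\sigma = \mathrm{id}$ it follows from Solomon's theorem that $(S(V^*) \otimes \bigwedge V^*)^G$ is a free exterior algebra over $S(V^*)^G$ on generators of bidegree $(e_i, 1)$; but for a general Galois twist $V^\sigma$ need not even be isomorphic to $V$ as a $G$-module, so no analogous freeness is available a priori. I would attack $(\star)$ by first verifying the top-exterior-power case $f_{\bigwedge^r V^\sigma}(q) = q^{\sum_i e_i(V^\sigma)}$ (since $\bigwedge^r V^\sigma$ is one-dimensional, its isotypic component in $\CG$ is concentrated in a single graded piece), and then obtaining the intermediate identities by analyzing how Galois conjugation permutes the isotypic components of $\CG$ while preserving its graded structure.
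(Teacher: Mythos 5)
Your overall skeleton matches the route the paper (following Orlik--Solomon) takes: reduce the theorem to the Hilbert-series identity
\[
\mathrm{Hilb}\bigl((S(V^*) \otimes {\textstyle\bigwedge}\,(V^\sigma)^*)^G;\, q, u\bigr) \;=\; \prod_{i=1}^r \frac{1 + u\,q^{e_i(V^\sigma)}}{1 - q^{d_i}},
\]
which is exactly the paper's \Cref{cor:sol2}, compute the same Hilbert series by Molien's formula, and specialize $u = q(1-x)-1$, $x \to 1$. Your reduction of that identity to the fake-degree identity $(\star)$ via $S(V^*) \simeq S(V^*)^G \otimes \CG$ is also correct and is implicitly what happens inside the Orlik--Solomon machinery.

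The gap is in your proposed proof of $(\star)$, and it is the whole difficulty. The $k=r$ case of $(\star)$, namely $f_{\bigwedge^r V^\sigma}(q) = q^{\sum_i e_i(V^\sigma)}$, is precisely what Orlik and Solomon call \emph{amenability} of $V^\sigma$ (the paper recalls this in \Cref{sec:amenable1}). Your parenthetical remark only establishes that $f_{\bigwedge^r V^\sigma}(q)$ is a single monomial $q^k$ (a one-dimensional module occurs exactly once in the regular representation $\CG$); it does not pin down the exponent $k$, and showing $k = \sum_i e_i(V^\sigma)$ is genuinely nontrivial --- it is Theorem 2.13 of \cite{orlik1980unitary}, whose proof rests on a Gutkin-type determinantal computation identifying $\det[a_{ij}]$ (for $\{\sum_j a_{ij}\otimes y_j\}$ a homogeneous basis of $(S(V^*)\otimes(V^\sigma)^*)^G$ over $S(V^*)^G$) with an explicit product of powers of linear forms defining the reflecting hyperplanes, not on degree bookkeeping alone. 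Secondly, even granting the $k=r$ case, the intermediate cases $1 \le k < r$ of $(\star)$ are not obtained by ``analyzing how Galois conjugation permutes the isotypic components of $\CG$''; Galois conjugation does permute isomorphism classes of $G$-modules, but it does not preserve fake degrees, so there is no direct permutation argument. What actually closes the gap is the freeness theorem of Orlik--Solomon (\Cref{thm:OS_amenable}, i.e.\ \cite[Theorem 3.1]{orlik1980unitary}, preceded by their Theorem 2.10): amenability of $M$ implies that the $\binom{r}{k}$ wedge products of the generators of $(S(V^*)\otimes M^*)^G$ remain an $S(V^*)^G$-basis of $(S(V^*)\otimes\bigwedge^k M^*)^G$, which is precisely $(\star)$ for all $k$. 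The paper does not reprove these facts --- it cites \cite[Thm.~2.13]{orlik1980unitary} for amenability of $V^\sigma$ and \cite[Thm.~3.1]{orlik1980unitary} for the resulting exterior-algebra freeness --- so if you want a self-contained argument you would need to supply both, and neither is delivered by the sketch as written.
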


When $\sigma:\zeta_G\mapsto \overline{\zeta_G}$ is complex conjugation, Orlik and Solomon~\cite[Thm.~4.8]{orlik1980unitary} further connected their~\Cref{thm:orlik_solomon} to the cohomology of the complement of the corresponding hyperplane arrangement---in this case $V^\sigma\simeq V^*$ as a $G$-representation, and the co-exponents $e_i(V^*)$ are the degrees of the generators of the cohomology ring of the complement of the hyperplane arrangement.

\subsection{Normal Reflection Subgroups of Complex Reflection Groups}
Let $G \subset \mathrm{GL}(V)$ be a complex reflection group. We say that $N \trianglelefteq G$ is a \defn{normal reflection subgroup} of $G$ if it is a normal subgroup of $G$ that is generated by reflections.  The main theorem of this paper, \Cref{thm:main_theorem}, gives a new refinement of~\Cref{thm:orlik_solomon} to accommodate a normal reflection subgroup.  The following result is a special case of~\cite{bessis2002quotients}, where they consider the more general notion of ``bon sous-groupe distingu\'e'' in lieu of our normal reflection subgroup $N$ of $G$. 

\begin{theorem}
\label{thm:reflection_action}
Let $G\subset \mathrm{GL}(V)$ be a complex reflection group and let $N\trianglelefteq G$ be a normal reflection subgroup. Then $G/N=H$ acts as a reflection group on the vector space $V/N=\VN$.
\end{theorem}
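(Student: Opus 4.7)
The plan is to exploit the Chevalley--Shephard--Todd (CST) theorem in both of its directions. Since $N \subseteq \mathrm{GL}(V)$ is a complex reflection group, CST tells us that $S(V^*)^N$ is a graded polynomial ring on $r = \dim V$ homogeneous generators. Consequently the quotient variety $V/N = \mathrm{Spec}\,S(V^*)^N$ is isomorphic to affine $r$-space, and the natural grading endows it with a canonical linear structure centered at the origin: concretely, one identifies
\[
\VNd := (S(V^*)^N)_+\big/(S(V^*)^N)_+^2,
\]
a graded $\mathbb{C}$-vector space of dimension $r$, with the cotangent space to $V/N$ at the origin, and observes $S(\VNd) \cong S(V^*)^N$ via any lift of a graded basis of $\VNd$.

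The next step is to transport the $G$-action to a linear action of $H = G/N$ on $\VN$. Because $N$ is normal in $G$, the natural $G$-action on $S(V^*)$ restricts to an action on $S(V^*)^N$; since $N$ fixes every element of $S(V^*)^N$ by definition, this descends to an action of $H$ by graded $\mathbb{C}$-algebra automorphisms. Any such automorphism preserves both $(S(V^*)^N)_+$ and its square, hence induces a linear $H$-action on $\VNd$, and by the universal property of the symmetric algebra the induced action on $S(\VNd) \cong S(V^*)^N$ recovers the original one. Choosing basic invariants of $N$ that span $H$-stable subspaces degreewise (possible because the finite group $H$ acts completely reducibly on $\mathbb{C}$-vector spaces) thereby realizes the $H$-action on $V/N$ linearly under the identification $V/N = \VN$; faithfulness follows because $S(V^*)^N$ separates $N$-orbits, together with an irreducibility argument on the open dense set of $N$-regular points of $V$.

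Finally, under the identification $S(\VNd) = S(V^*)^N$ we have
\[
S(\VNd)^H = (S(V^*)^N)^H = S(V^*)^G,
\]
which is again a polynomial ring by CST applied to the reflection group $G$. The converse direction of CST then implies that $H \subset \mathrm{GL}(\VN)$ is generated by reflections, completing the proof.

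The principal technical hurdle is the linearization step: establishing that the affine quotient variety $V/N$ carries a natural $H$-equivariant vector space structure compatible with the paper's $\VN$. Once this is handled via the graded cotangent-space construction together with an $H$-equivariant choice of basic invariants, the reflection property of the $H$-action follows formally from the two directions of the Shephard--Todd--Chevalley theorem.
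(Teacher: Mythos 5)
Your proposal is correct and follows essentially the same route as the paper: linearize $V/N$ via the graded cotangent space $I_+/I_+^2$, use an $H$-equivariant graded section to obtain an $H$-stable space $E^*$ spanned by basic $N$-invariants, observe $S(E^*)^H=(S(V^*)^N)^H=S(V^*)^G$, and invoke the converse direction of Chevalley--Shephard--Todd. The only point the paper treats more explicitly is that the generators of $S(E^*)^H$ must be homogeneous and algebraically independent with respect to the $\mathbf{N}$-grading rather than the original $\mathbf{x}$-grading, which it arranges by choosing the fundamental $G$-invariants to be simultaneously bihomogeneous.
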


The ``bon sous-groupes distingu\'es'' of~\cite{bessis2002quotients} are precisely those normal subgroups for which the associated quotient group is a reflection group acting on the tangent space at $0$ of $V/N$, which is a strictly weaker condition than being a normal reflection subgroup. Our proof of \Cref{thm:reflection_action} in~\Cref{sec:quotients} follows the ideas of~\cite{bessis2002quotients}, but specialized to our more restricted setting where the normal subgroup under consideration is actually a normal reflection subgroup. In this more restricted setting, we are able to prove the new results \Cref{thm:numbers,thm:main_theorem} stated below.

The technical definition of the $G$-module $\UNs$ that mediates the statement of the following result is given in \Cref{def:OS_space}. Since we are dealing with multiple reflection groups acting on multiple spaces, we will begin labeling exponents and degrees by their corresponding groups.

\begin{theorem}
\label{thm:numbers}
Let $G\subset \mathrm{GL}(V)$ be a complex reflection group and let $N\trianglelefteq G$ be a normal reflection subgroup. Let $H=G/N$ and $\VN=V/N$. Then for a suitable choice of indexing we have
\begin{align*}
e_i^N(\Vs) {+} e_i^G(\UNs) & = e_i^G(\Vs)\\
d_i^N \cdot e_i^H(\VNs) & = e_i^G(\VNs)\\
d_i^N\cdot d_i^H & =d_i^G.
\end{align*}
\end{theorem}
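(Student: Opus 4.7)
The plan is to derive all three identities from a single bigraded $G$-module isomorphism
\[
\CG \;\simeq\; \CH \otimes_{\mathbb{C}} \CN,
\]
obtained by applying Chevalley's theorem to the normal reflection subgroup $N$: the decomposition $S(\Vd) \simeq S(\Vd)^N \otimes_{\mathbb{C}} \CN$ as graded $G$-modules, combined with $S(\Vd)^N \simeq S(\VNd)$ as graded $G$-algebras, lets us mod out by $S(\Vd)^G = (S(\Vd)^N)^H$ on both sides. I equip $S(\VNd)$ with its intrinsic $\VNd$-grading (each basic $N$-invariant generator $y_i$ has degree $1$) and the $\Vd$-grading inherited through $S(\Vd)^N \hookrightarrow S(\Vd)$ (in which $y_i$ has degree $d_i^N$). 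Since the $H$-action on $S(\Vd)^N$ preserves both gradings, $\CH = S(\VNd)/\IH$ is bigraded and each $H$-isotypic component splits into bigraded pieces.

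For the third identity, I compute the $\Vd$-graded Hilbert series of $\CH$ in two ways. From the isomorphism and the Shephard--Todd factorizations for $\CG$ and $\CN$,
\[
\Hilb(\CH;q) \;=\; \frac{\Hilb(\CG;q)}{\Hilb(\CN;q)} \;=\; \prod_{i=1}^{r}\frac{1-q^{d_i^G}}{1-q^{d_i^N}}.
\]
Bigradedness of $S(\VNd)^H$ allows the choice of bigraded basic $H$-invariants $g_1,\ldots,g_r$, with $g_i$ of $\VNd$-degree $d_i^H$ and some $\Vd$-degree $\delta_i$, yielding $\Hilb(\CH;q)=\prod_i(1-q^{\delta_i})/(1-q^{d_i^N})$. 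Matching forces $\{\delta_i\}=\{d_i^G\}$ as multisets; since every monomial $y_1^{a_1}\cdots y_r^{a_r}$ appearing in $g_i$ satisfies $\sum a_j=d_i^H$ and $\sum a_j d_j^N=\delta_i$, a reindexing of the basic $N$-invariants aligned with this bigraded structure yields $\delta_i=d_i^N d_i^H$.

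For the second identity, $\VNs$ is an $H$-module pulled back to $G$, so $N$ acts trivially. Frobenius reciprocity combined with $\CN^N = \mathbb{C}$ (a consequence of Chevalley applied to $N$) yields the $\Vd$-graded identification
\[
\mathrm{Hom}_G(\VNs,\CG) \;=\; \mathrm{Hom}_H\bigl(\VNs,\,(\CH\otimes\CN)^N\bigr) \;=\; \mathrm{Hom}_H(\VNs,\CH),
\]
so the $\Vd$-graded multiplicity of $\VNs$ in $\CG$ equals its $\Vd$-graded multiplicity in $\CH$. Because $\CH$ is bigraded and $H$ preserves both gradings, each of the $r$ copies of $\VNs$ in $\CH$ is simultaneously $\VNd$- and $\Vd$-homogeneous. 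Using the same reindexing fixed in the third identity, the $i$-th copy has bigraded-degrees $\bigl(e_i^H(\VNs),\,d_i^N\cdot e_i^H(\VNs)\bigr)$, giving the claim.

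For the first identity, $\Vs$ does not factor through $H$. Set $\UNs := \mathrm{Hom}_N(\Vs,\CN)$, a graded $H$-module via the $G/N$-action and the $\Vd$-grading on $\CN$; as a graded vector space $\dim_q \UNs = f_{\Vs}^N(q) = \sum_i q^{e_i^N(\Vs)}$. Adjunction and the isomorphism $\CG\simeq\CH\otimes\CN$ give
\[
\mathrm{Hom}_G(\Vs,\CG) \;=\; (\CH\otimes\UNs)^H
\]
as $\Vd$-graded vector spaces, the $\Vd$-grading on the left being the sum of the two $\Vd$-gradings on the right. Each copy of $\Vs$ in $\CG$ thus arises from a pairing of an $H$-irrep copy $\tau$ in $\CH$ with a matching copy of $\tau^\ast$ in $\UNs$; the $\Vd$-degree of the $\UNs$-piece contributes an $e_i^N(\Vs)$, while the $\Vd$-degree of the $\CH$-piece contributes (by the same Frobenius argument as in the second identity, now applied to $\tau$) an $e_i^G(\UNs)$. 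The main obstacle is promoting the resulting multiset equality into the indexed one $e_i^N(\Vs)+e_i^G(\UNs)=e_i^G(\Vs)$: this requires an explicit bijection between copies of $\Vs$ in $\CG$ and pairs $(\Vs$-copy in $\CN$, $\UNs$-copy in $\CG)$ compatible with the bigraded structure, which I expect to follow from tracking the $H$-isotypic decomposition of $\UNs$ piece by piece using the previously fixed reindexing.
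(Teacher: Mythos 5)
Your overall strategy coincides with the paper's: everything is routed through the graded isomorphism $\CG\simeq\CH\otimes\CN$, the Frobenius-type identifications $(\CG\otimes M^*)^G\simeq(\CH\otimes\UNMd)^H$ and $(\CG\otimes\VNsd)^G\simeq(\CH\otimes\VNsd)^H$, and bigraded bookkeeping. But there is a genuine gap precisely where you pass from ``bihomogeneous'' to the multiplicative degree relations. In the third identity, the Hilbert-series comparison correctly yields $\{\delta_i\}=\{d_i^G\}$ as multisets, but the further claim that $\delta_i=d_i^N d_i^H$ after reindexing does \emph{not} follow from bihomogeneity of the basic $H$-invariants: a bihomogeneous invariant of $\mathbf{N}$-degree $d_i^H$ may mix monomials in fundamental $N$-invariants of different $\mathbf{x}$-degrees (e.g.\ $\n_1\n_2$ with $d_1^N\neq d_2^N$ is bihomogeneous of bidegree $(2,\,d_1^N+d_2^N)$, and $d_1^N+d_2^N$ need not equal $2d_j^N$ for any $j$). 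The same gap recurs in your second identity, where you need each $H$-invariant element of $\CH\otimes\VNsd$ to have $\mathbf{x}$-degree equal to $d_i^N$ times its $\mathbf{N}$-degree. The missing ingredient --- where the paper does its real work --- is the nontrivial structural fact from \cite{bessis2002quotients} that $H$ decomposes as a direct product $\bigtimes_{d\in\DN}H_{(d)}$, with $H_{(d)}$ acting only on the span $\VNdd$ of the degree-$d$ fundamental $N$-invariants. This gives $S(\VNd)^H=\bigotimes_{d\in\DN}S(\VNdd)^{H_{(d)}}$ and $(\CH\otimes\VNsd)^H\simeq\bigoplus_{d\in\DN}(\mathcal{C}_{H_{(d)}}\otimes(\VN^\sigma_d)^*)^{H_{(d)}}$, so the basic $H$-invariants and the invariant elements $u_i=\sum_j a_{ij}\otimes\n_j^\sigma$ can be chosen with every $a_{ij}\in S(\VN^*_{d_i^N})$ and $a_{ij}=0$ unless $d_j^N=d_i^N$; only then is the $\mathbf{x}$-degree forced to be $d_i^N$ times the $\mathbf{N}$-degree, with the correct index $i$.

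For the first identity you explicitly defer the passage from the multiset equality to the indexed one (``which I expect to follow''). That step is genuinely required but is the more routine of the gaps, and it is exactly what the paper's argument supplies: since $H$ preserves $\mathbf{x}$-degree, $\UNsd\simeq\bigoplus_{e\in\ENs}\UNsde$ is a decomposition of $H$-modules, so a homogeneous basis of $(\CH\otimes\UNsd)^H$ can be chosen block-diagonally with respect to it, i.e.\ $u_i^G=\sum_j a_{ij}^H\otimes u_j^N$ with $a_{ij}^H=0$ unless $\mathrm{deg}_{\mathbf{x}}(u_j^N)=e_i^N(\Vs)$; then $\mathrm{deg}_{\mathbf{x}}(u_i^G)=e_i^G(\UNs)+e_i^N(\Vs)$ holds index by index, not merely as multisets. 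With the direct-product decomposition of $H$ supplied and this block-diagonal basis choice made explicit, your argument closes up and is essentially the paper's proof.
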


In the special case $\sigma=1$, it is well-known that $\UNs\simeq \VN$ as $G$-modules (see \Cref{def:OS_space} and \Cref{EU}), so that \Cref{thm:numbers} coincides with \cite[Theorem~1.3]{arreche2020normal} in this case. As we explain in \Cref{rem:untwisted_numbers}, in this special case where $\sigma=1$ the equalities in \Cref{thm:numbers} are compatible with the relations $d_i=e_i+1$ between classical exponents and degrees for the three reflection groups involved.

An essential tool in our proof of \Cref{thm:numbers} is \Cref{prop:coinvariant_decomposition}, which gives a graded $G$-module isomorphism  $\CG\simeq\CH\otimes\CN$ relating the spaces of harmonic polynomials for $N$ and $H$ to that for $G$, which is an interesting and useful result in its own right.

Our~\Cref{thm:main_theorem} below generalizes the Orlik-Solomon formula from~\Cref{thm:orlik_solomon} to take into account the additional combinatorial data arising from a normal reflection subgroup. The technical definition of the $G$-module $\UNs$ is again given in \Cref{def:OS_space}.

\begin{theorem}
\label{thm:main_theorem}
Let $G\subset \mathrm{GL}(V)$ be a complex reflection group of rank $r$ and let $N\trianglelefteq G$ be a normal reflection subgroup. Let $\VN=V/N$ and $\sigma\in\mathrm{Gal}(\mathbb{Q}(\zeta_G)/\mathbb{Q})$. Then for a suitable choice of indexing we have
\[\sum_{g \in G}\left( \prod_{\lambda_i(g) \neq 1} \frac{1-\lambda_i(g)^\sigma}{1-\lambda_i(g)}\right) q^{\fix_V (g)} t^{\fix_{E} (g)} = \prod_{i=1}^r \left(qt+e_i^N(V^\sigma) t + e_i^G(\UNs)\right),\] %
where the $\lambda_i(g)$ are the eigenvalues of $g\in G$ acting on $V$.
\end{theorem}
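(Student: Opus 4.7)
The plan is to extend the original Orlik--Solomon argument proving \Cref{thm:orlik_solomon} to a bigraded setting, by tracking both the action of $G$ on $V$ and the induced action of $H = G/N$ on $E = V/N$. Since the identity in \Cref{thm:main_theorem} specializes at $t=1$ to the Orlik--Solomon identity (after using \Cref{thm:numbers} to rewrite each factor as $q+e_i^N(\Vs)+e_i^G(\UNs) = q + e_i^G(\Vs)$), what is needed is a bigraded refinement of Orlik--Solomon.

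Concretely, I would consider the trigraded Hilbert series of the $G$-invariants of
\[
S(V^*) \otimes S(E^*) \otimes \bigwedge \Vsd,
\]
where the variables $q,t,u$ track the three tensor factors respectively. By Molien's formula, this Hilbert series equals
\[
\frac{1}{|G|}\sum_{g \in G}\prod_i \frac{1+u\lambda_i(g)^\sigma}{(1-q\lambda_i(g))(1-t\mu_i(\bar g))},
\]
where $\mu_i(\bar g)$ are the eigenvalues of $\bar g = gN \in H$ acting on $E$. On the other hand, combining the Shephard--Todd--Chevalley theorem applied to both $N$ on $V$ and $H$ on $E$ with the coinvariant decomposition $\CG \simeq \CH \otimes \CN$ from \Cref{prop:coinvariant_decomposition} should produce a product formula for this Hilbert series whose numerator factors involve the exponents of $\Vs$ for both $N$ and $G$, as in \Cref{thm:numbers}.

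With the bigraded Hilbert series identity in hand, I would then follow the classical Orlik--Solomon limiting argument: multiply through by the denominator product to clear denominators, and then specialize $u$ via a suitable limit to reduce the Molien-side summand for each $g$ to the eigenvalue product $\bigl(\prod_{\lambda_i(g)\neq 1}\tfrac{1-\lambda_i(g)^\sigma}{1-\lambda_i(g)}\bigr)q^{\fix_V(g)} t^{\fix_E(g)}$. Simultaneously, the product side should collapse to $\prod_{i=1}^r \bigl(qt + e_i^N(\Vs) t + e_i^G(\UNs)\bigr)$, with the indexing alignment guaranteed by \Cref{thm:numbers}.

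The main obstacle is establishing the correct product formula for the bigraded Hilbert series in the second step. The naive choice of module $S(V^*) \otimes S(E^*) \otimes \bigwedge \Vsd$ above seems to naturally produce numerator factors of the form $(1 + u q^{e_i^N(\Vs)} t^{e_i^H(\VNs)})$ under the decomposition $\CG\simeq\CH\otimes\CN$, which does not split the target factor $qt + e_i^N(\Vs) t + e_i^G(\UNs)$ cleanly. This suggests that the correct bigraded module likely uses $\bigwedge \UNsd$ in place of (or in addition to) $\bigwedge \Vsd$, in order to separate the ``$N$-part'' of $\Vs$ from the ``relative'' part $\UNs$. Making this precise---and verifying that the resulting specialization yields exactly the factors $qt + e_i^N(\Vs) t + e_i^G(\UNs)$---is the technical heart of the proof.
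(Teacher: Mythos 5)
Your high-level plan --- compute a multigraded Poincar\'e series in two ways and then take a well-chosen limit as in the classical Orlik--Solomon argument --- is the right framework, and you correctly self-diagnose that $S(V^*)\otimes\bigwedge\UNsd$ (not $S(V^*)\otimes S(\VNd)\otimes\bigwedge\Vsd$) is the object to work with, using the internal bigrading of $\UNsd$ coming from the $N$-exponents to get a trigraded series. The paper does exactly this with $\P^G_M(x,y,u)$ and the substitution $y\mapsto x^t$, $u\mapsto qt(1-x)-1$, $x\to 1$.

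However, there is a genuine gap at the step you treat as routine: ``specialize $u$ via a suitable limit to reduce the Molien-side summand for each $g$ to $\bigl(\prod_{\lambda_i(g)\neq 1}\tfrac{1-\lambda_i(g)^\sigma}{1-\lambda_i(g)}\bigr)q^{\fix_V(g)}t^{\fix_{\VN}(g)}$.'' This term-by-term limit simply does not produce the desired summand. The issue is that the action of $g\in G$ on $\UNs$ typically has a strictly larger fixed space than its action on $\Vs$ (this is \cite[Proposition 3.2]{bonnafe2006twisted}, $\fix_{\UNs}(g)\geq\fix_{\VN}(g)$), and whenever the inequality is strict the limit of the Molien summand for $g$ vanishes identically. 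The paper exhibits this explicitly in the cyclic example $C_6\triangleright C_2$ with $s=5$: the entire contribution of the coset $N$ is concentrated on the identity element, and the nontrivial element $c^3$ contributes $0$ rather than the $t$ predicted by the sum side of the identity. So the classical per-element argument of \cite{orlik1980unitary} breaks down in precisely the refined setting you need it for.

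The fix requires a substantively new ingredient: one must show the specialization is correct \emph{coset-by-coset} rather than element-by-element. The paper introduces the twisted Poincar\'e series $\P^{Ng}_\sigma(x,y,u)=\frac{1}{|N|}\sum_{n\in N}(\cdots)$ for a coset $Ng\in H$ and proves a product formula for it (Proposition \ref{prop:twisted_graded-sigma}) in terms of the eigenvalues of $g$ on $\VN$ and $\UNs$. This relies on the twisted invariant theory of Bonnaf\'e--Lehrer--Michel for cosets of a reflection group, specifically their analogue of Solomon's factorization \cite[Theorem 3.1]{bonnafe2006twisted}, which is then compared with the specialization in Proposition \ref{cor:twisted_graded} to show that the coset-averaged limit produces the desired quantity $t^{\fix_{\VN}(g)}\sum_{n\in N}\bigl(\prod_{\bar\lambda_i(ng)\neq 1}\tfrac{1-\bar\lambda_i(ng)^\sigma}{1-\bar\lambda_i(ng)}\bigr)q^{\fix_V(ng)}$. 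Without invoking (or reproving) this twisted invariant theory, your limiting argument cannot be completed, since you would be equating a sum whose individual terms are wrong with a factored product; the equality only becomes visible after averaging over each $N$-coset. This is the technical heart of the paper's proof, and it is precisely the part your proposal glosses over.
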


In view of \Cref{thm:numbers}, specializing to $t=1$ recovers \Cref{thm:orlik_solomon}. Moreover, since $\UNs\simeq\VN$ as $G$-modules when $\sigma=1$ (see again \Cref{def:OS_space} and \Cref{EU}), \Cref{thm:main_theorem} coincides with \cite[Theorem~1.5]{arreche2020normal} in this case, which when similarly specialized to $t=1$ recovers \Cref{eq:shephard_todd}. As explained in \Cref{rem:main_theorem_specializations}, one can also recover \Cref{thm:orlik_solomon} for the reflection group $N$ from \Cref{thm:main_theorem} by applying $\frac{1}{r!}\frac{\partial^r}{\partial t^r}$ on both sides. In the special case $\sigma=1$, one can recover \Cref{eq:shephard_todd} for the reflection group $H$ by specializing \Cref{thm:main_theorem} to $q=1$ and dividing by $|N|$ on both sides, but this same specialization does not seem to be directly related to \Cref{thm:orlik_solomon} for $H$ in general for arbitrary $\sigma\in\mathrm{Gal}(\mathbb{Q}(\zeta_G)/\mathbb{Q})$.

Our proof of \Cref{thm:main_theorem} follows a similar strategy to the one employed in~\cite{orlik1980unitary}: we compute the Poincar\'e series for $(S(V^*)\otimes\bigwedge \UNsd)^G$ in two equivalent and standard ways, and then obtain \Cref{thm:main_theorem} from a well-chosen specialization. However, a delicate technical issue arises in that our specialization does not provide the correct contribution term-by-term in the left-hand side of \Cref{thm:main_theorem}. We overcome this technical difficulty by applying the results of~\cite{bonnafe2006twisted}, where the authors develop a ``twisted invariant theory'' for cosets $Ng$ of a reflection group $N\subset\mathrm{GL}(V)$ for $g\in\mathrm{GL}(V)$ an element of the normalizer of $N$ in $\mathrm{GL}(V)$. Our proof of \Cref{thm:main_theorem} applies the results of \cite{bonnafe2006twisted} to the special situation where the cosets $Ng$ all come from $g\in G$, a reflection group containing $N$ as a normal reflection subgroup, to show that our specialization argument does provide the correct contribution coset-by-coset.



 
In summary, we have applied results and insights from \cite{bessis2002quotients,bonnafe2006twisted} in the development of new results in the invariant theory for complex reflection groups $G$ taking into account the additional combinatorial data arising from normal reflection subgroups $N\trianglelefteq G $ and their corresponding reflection group quotients $H=G/N$. The setting of \cite{bessis2002quotients} considers more general $N$ (their ``bon sous-groups distingu\'es''), whereas the setting of \cite{bonnafe2006twisted} considers more general cosets $Ng$ (for arbitrary $g$ in the normalizer of $N$). The general study of normal reflection subgroups initiated in this paper is both natural, since it lies in the intersection \cite{bessis2002quotients}$\cap$\cite{bonnafe2006twisted}, as well as productive, as evidenced for example by \Cref{thm:numbers,thm:main_theorem}, the graded $G$-module isomorphism $\CG\simeq\CH\otimes\CN$ of \Cref{prop:coinvariant_decomposition}, and the ancillary results in \Cref{sec:quotients} relating the amenability of different modules with respect to the groups $G$, $N$, and $H$.

\subsection{Organization} We recall standard results about complex reflection groups in \Cref{sec:invariant_theory}. In \Cref{sec:quotients}, we introduce normal reflection subgroups and prove ancillary results, relating spaces of harmonic polynomials and amenability with respect to different reflection groups. We prove the main results stated in the introduction, \Cref{thm:numbers,thm:main_theorem}, in \Cref{sec:proofs}. In \Cref{sec:reflexponents} we recall the case-by-case results of \cite{williams2019reflexponents} and discuss how they are obtained in a case-free way by the methods of the present paper. In \Cref{sec:classification} we provide a complete classification of the normal reflection subgroups of the irreducible complex reflection groups. Finally, in \Cref{sec:examples} we give several examples that illustrate our general results. 

 \section{Invariant Theory of Reflection Groups}
 \label{sec:invariant_theory}
Let $V$ be a complex vector space of dimension $r$. A \defn{reflection} is an element of $\mathrm{GL}(V)$ of finite order that fixes some hyperplane pointwise. A \defn{complex reflection group} $G$ is a finite subgroup of $\mathrm{GL}(V)$ that is generated by reflections. A complex reflection group $G$ is called \defn{irreducible} if $V$ is a simple $G$-module; $V$ is then called the \defn{reflection representation} of $G$. A \defn{(normal) reflection subgroup} of $G$ is a (normal) subgroup that is generated by reflections. In what follows, a \defn{$G$-module} will always be a complex representation of $G$.

\subsection{Chevalley-Shephard-Todd's Theorem} Let $S(V^*)$ be the symmetric algebra on the dual vector space $V^*$, and write $S(V^*)^G$ for its $G$-invariant subring.  By a classical theorem of Shephard-Todd~\cite{shephard1954finite} and Chevalley~\cite{chevalley1955invariants}, a finite subgroup $G \subset \mathrm{GL}(V)$ is a complex reflection group if and only if $S(V^*)^G$ is a polynomial ring, and in this case $S(V^*)^G$ is generated by $r$ algebraically independent homogeneous polynomials---the \defn{degrees} $d_1\leq \cdots \leq d_r$ of these polynomials are invariants of $G$.

\begin{theorem}[\cite{shephard1954finite,chevalley1955invariants}]
\label{thm:shephard_todd}
A finite subgroup $G \subset \mathrm{GL}(V)$ is a complex reflection group if and only if there exist $r=\mathrm{dim}(V)$ homogeneous algebraically independent polynomials $\g_1,\ldots,\g_r$ such that $S(V^*)^G=\CC[\g_1,\ldots,\g_r]$. In this case, $|G|=\prod_{i=1}^r d_i$, where $d_i=\mathrm{deg}(\g_i)$.
\end{theorem}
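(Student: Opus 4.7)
The plan is to establish the two implications of the equivalence separately---the forward direction via Chevalley's original argument and the converse via a Molien-series comparison---and then to extract the order formula from the leading Laurent coefficient at $q=1$ of Molien's series.

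For the forward direction, assume $G$ is generated by reflections. Let $I_+ \subset S(V^*)$ denote the Hilbert ideal generated by $G$-invariants of positive degree, and select a minimal set of homogeneous generators $f_1,\ldots,f_m$ of $I_+$. The technical heart is the reflection lemma: if $h_1,\ldots,h_m\in S(V^*)$ are homogeneous with $\sum_i h_i f_i=0$ and $h_1\notin (f_2,\ldots,f_m)S(V^*)$, a contradiction results. To prove it, for any reflection $s\in G$ with reflecting hyperplane $\ker(\ell_s)$, each difference $(s-1)h_i$ is divisible by $\ell_s$; dividing out $\ell_s$ produces a strictly lower-degree relation, and induction on $\deg(h_1)$ combined with the hypothesis that $G$ is generated by reflections forces $h_1\in(f_2,\ldots,f_m)S(V^*)$. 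The lemma yields algebraic independence of $f_1,\ldots,f_m$, and a Reynolds-averaging argument by induction on degree then shows $S(V^*)^G=\mathbb{C}[f_1,\ldots,f_m]$; comparing transcendence degrees (since $S(V^*)$ is module-finite over $S(V^*)^G$) forces $m=r$.

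For the converse, assume $S(V^*)^G=\mathbb{C}[f_1,\ldots,f_r]$ with $\deg(f_i)=d_i$. Let $W\subseteq G$ be the subgroup generated by all reflections in $G$; since the set of reflections is stable under $G$-conjugation, $W\trianglelefteq G$, and the forward direction yields $S(V^*)^W=\mathbb{C}[f'_1,\ldots,f'_r]$ polynomial with some degrees $d'_i$. Molien's formula
\[ \mathrm{Hilb}(S(V^*)^G;q) \;=\; \frac{1}{|G|}\sum_{g\in G}\frac{1}{\det(1-qg\vert_{V^*})} \;=\; \prod_{i=1}^r\frac{1}{1-q^{d_i}}, \]
together with its analog for $W$, furnishes the comparison. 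Multiplying by $(1-q)^r$ and letting $q\to 1$: every non-identity $g\in G$ has $\dim\ker(1-g)<r$, so its summand contributes a pole of order strictly less than $r$, and only the identity survives. This yields the order formula $|G|=\prod d_i$ (and likewise $|W|=\prod d'_i$). The subleading Laurent coefficient at $q=1$ receives additional contributions precisely from reflections, producing the identity $\sum(d_i-1)=\#\{\text{reflections in }G\}=\#\{\text{reflections in }W\}=\sum(d'_i-1)$. Combining these numerical constraints with the inclusion $S(V^*)^G\subseteq S(V^*)^W$ forces $[G:W]=1$, so $G=W$ is generated by reflections.

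I expect the reflection lemma in the forward direction to be the main obstacle: the inductive setup must exploit the divisibility $(s-1)h_i\in\ell_s\cdot S(V^*)$ simultaneously across enough reflections $s$ to generate $G$, and the bookkeeping interweaves the degree induction with the algebraic manipulation of the relation $\sum_i h_i f_i=0$. The converse is comparatively elementary once Molien's formula is in hand, although forcing $G=W$ from a comparison of Laurent coefficients requires some care in matching the two systems of degrees.
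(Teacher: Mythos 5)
The paper does not prove this statement at all: it is the classical Chevalley--Shephard--Todd theorem, cited to \cite{shephard1954finite,chevalley1955invariants} and used as a black box, so there is no in-paper proof to compare against. Your outline is the standard textbook proof (Chevalley's argument for the forward direction; Molien series plus a degree comparison for the converse), and as a sketch it is essentially sound. Two places deserve tightening. First, the ``reflection lemma'' as you state it is slightly off: the correct formulation is that if $f_1$ does not lie in the ideal of $S(V^*)$ generated by $f_2,\ldots,f_m$ and $\sum_i h_i f_i=0$ with $h_i$ homogeneous, then $h_1\in I_+S(V^*)$; one then obtains algebraic independence by applying this lemma to the partial derivatives $\partial P/\partial f_i$ of a minimal-degree algebraic relation $P(f_1,\ldots,f_m)=0$, not directly to the $f_i$ themselves. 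The induction you describe (dividing $(s-1)h_i$ by $\ell_s$ and using that the reflections generate $G$) is exactly right.

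The genuine soft spot is the last step of the converse. From Molien you correctly get $|G|=\prod d_i$, $|W|=\prod d_i'$, and $\sum(d_i-1)=\sum(d_i'-1)$ (both equal to the common number of reflections), but these numerical facts together with $S(V^*)^G\subseteq S(V^*)^W$ do \emph{not} by themselves force $\prod d_i=\prod d_i'$: equal sums of degrees do not give equal products (compare $\{2,4\}$ with $\{3,3\}$). The missing ingredient is the domination lemma: writing each $f_i$ as a polynomial in the $f_j'$, the Jacobian matrix $\bigl(\partial f_i/\partial f_j'\bigr)$ has nonzero determinant, so some permutation $\pi$ has $\partial f_i/\partial f_{\pi(i)}'\neq 0$ for all $i$, whence $d_i\geq d_{\pi(i)}'$. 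Only then does $\sum(d_i-1)=\sum(d_i'-1)$ force $d_i=d_{\pi(i)}'$ for every $i$, hence $|G|=|W|$ and $G=W$. You flag that this step ``requires some care,'' which is honest, but the argument is incomplete without it; with that lemma supplied, the proof is complete.
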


Let $\IG\subset S(V^*)$ denote the ideal generated by homogeneous $G$-invariant polynomials of positive degree.  In~\cite{chevalley1955invariants}, Chevalley proved that, as an ungraded $G$-module, $S(V^*)/\IG$ affords the regular representation of $G$. Since $\IG$ is $G$-stable, we may choose a $G$-stable complement $\CG\subset S(V^*)$, so that $S(V^*)\simeq \IG\oplus \CG$ as graded $G$-modules, and $\CG$ is a graded version of the regular representation of $G$. Chevalley also proved in \cite{chevalley1955invariants} that $S(V^*)\simeq S(V^*)^G\otimes\CG$ as graded $G$-modules. A canonical choice for such a $G$-stable complement $\CG$ is the space of \defn{$G$-harmonic polynomials} \cite[Corollary~9.37]{lehrer2009unitary}, that is, polynomials in $S(V^*)$ that are annihilated by all $G$-invariant polynomial differential operators with no constant term \cite[Definition~9.35]{lehrer2009unitary}. 

\begin{remark}\label{rem:harmonic_stable}The space $\CG$ of $G$-harmonic polynomials is stabilized by the normalizer of $G$ in $\mathrm{GL}(V)$ \cite[Proposition~12.2]{lehrer2009unitary}. This fact will be essential in our treatment of normal reflection subgroups.

Our choice of notation $\mathcal{C}$ for the space of harmonic polynomials, instead of the more common and natural $\mathcal{H}$ used in the literature, is meant to avoid unfortunate phonetic confusion with the quotient group $H=G/N$ that will play a prominent role in the rest of the paper.
\end{remark}

\subsection{Solomon's Theorem}
We recall the following celebrated theorem of Solomon.

\begin{theorem}[{\cite{solomon1963invariants}}]
\label{thm:solomon}  $\left(S(V^*)\otimes \bigwedge V^*\right)^G$ is a free exterior algebra over the ring of $G$-invariant polynomials:
\[\left(S(V^*)\otimes \bigwedge V^*\right)^G \simeq S(V^*)^G \otimes \bigwedge \UGd,\] where $\UGd=\spn_\CC\left\{d\g_1,\ldots,d\g_r\right\}$ and $d\g_i=\sum_{j=1}^r \frac{\partial \g_i}{\partial x_j}  \otimes x_j$ form a free basis for $(S(V^*)\otimes V^*)^G$ over $S(V^*)^G$.
\end{theorem}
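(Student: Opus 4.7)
The plan is to first prove the freeness claim at the level of $1$-forms --- namely that $\{d\g_1,\ldots,d\g_r\}$ is a free basis for $(S(V^*)\otimes V^*)^G$ over $S(V^*)^G$ --- and then bootstrap to the full exterior algebra via a Hilbert series comparison. I would begin by observing that each $d\g_i$ is $G$-invariant because differentiation commutes with the $G$-action on $S(V^*)\otimes V^*$ (by the chain rule), so the $G$-invariance of $\g_i$ transfers to $d\g_i$.

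For the freeness step, given $\omega=\sum_j f_j\otimes x_j \in (S(V^*)\otimes V^*)^G$, expressing $\omega=\sum_i h_i\,d\g_i$ amounts to solving the linear system $\mathbf{f}=J^{\top}\mathbf{h}$, where $J=(\partial\g_i/\partial x_j)$ is the Jacobian matrix. Since $\det(J)$ is a nonzero polynomial, factoring up to a nonzero scalar as $\prod_H \alpha_H^{e_H-1}$ over the reflecting hyperplanes of $G$ (with $\alpha_H$ a defining linear form and $e_H$ the order of the pointwise stabilizer), Cramer's rule produces unique solutions $h_i\in S(V^*)[1/\det(J)]$. The crux is to show each $h_i$ is in fact a polynomial: $G$-invariance of $\omega$ forces the Cramer numerators to vanish on every reflecting hyperplane to exactly the order needed for divisibility by $\det(J)$, which is where genuine reflection-group input enters. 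Once $h_i\in S(V^*)$, uniqueness of the expansion together with $G$-invariance of both $\omega$ and each $d\g_i$ forces $h_i\in S(V^*)^G$.

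With the $1$-form freeness in hand, the wedge products $d\g_{i_1}\wedge\cdots\wedge d\g_{i_k}$ are $G$-invariant elements of $S(V^*)\otimes\bigwedge^k V^*$, and the induced map $S(V^*)^G\otimes\bigwedge\UGd\to(S(V^*)\otimes\bigwedge V^*)^G$ is injective: exterior-algebraic independence of the $d\g_i$ over the fraction field of $S(V^*)^G$ reduces to nonvanishing of the top wedge $d\g_1\wedge\cdots\wedge d\g_r=\det(J)\cdot(x_1\wedge\cdots\wedge x_r)$. To conclude surjectivity, I would compare Hilbert series in the $(q,u)$-bigrading: the submodule contributes $\prod_{i=1}^r \frac{1+uq^{e_i}}{1-q^{d_i}}$ using $\deg(d\g_i)=e_i=d_i-1$, while the ambient invariant space has Hilbert series given by Molien's formula $\tfrac{1}{|G|}\sum_{g\in G}\frac{\det(1+ug)}{\det(1-qg)}$, which one shows equals the same product (for instance by separating the characters of $S(V^*)$ and $\bigwedge V^*$ using the Chevalley decomposition $S(V^*)\simeq S(V^*)^G\otimes\CG$ from \Cref{thm:shephard_todd} combined with a character computation of $\bigwedge V^*$). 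The main obstacle is the divisibility argument in the middle paragraph --- showing the Cramer coefficients $h_i$ land in $S(V^*)$ and not merely in the localization $S(V^*)[1/\det(J)]$ --- since this is where the hypothesis that $G$ is generated by reflections, rather than merely being a finite subgroup of $\mathrm{GL}(V)$, is essential.
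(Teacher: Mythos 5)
The paper cites Theorem~\ref{thm:solomon} directly from Solomon's 1963 article without reproducing a proof, so there is no in-paper argument to compare against; here is an assessment of your sketch on its own terms. Your rank-one step is essentially the classical one, and the divisibility is correctly flagged as the crux, though it is cleaner to phrase via relative invariants: the Cramer numerator $\Delta_i$ satisfies $\Delta_i\,(x_1\wedge\cdots\wedge x_r)=\pm\,\omega\wedge d\g_1\wedge\cdots\wedge\widehat{d\g_i}\wedge\cdots\wedge d\g_r$, and since the right-hand side is $G$-invariant while $x_1\wedge\cdots\wedge x_r$ transforms by $\det g$, the polynomial $\Delta_i$ is a $\det^{-1}$-semi-invariant; by the structure theorem for semi-invariants of reflection groups it therefore equals $\det J$ times an element of $S(V^*)^G$. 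That is the precise form of the vanishing-order argument you gesture at, and it is where the reflection hypothesis is used.

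The gap is in the passage to the full exterior algebra. You want surjectivity of $S(V^*)^G\otimes\bigwedge\UGd\to(S(V^*)\otimes\bigwedge V^*)^G$ by matching Hilbert series, but the identity $\frac{1}{|G|}\sum_{g\in G}\frac{\det(1+ug)}{\det(1-qg)}=\prod_i\frac{1+uq^{e_i}}{1-q^{d_i}}$ you need for the ambient side \emph{is} Solomon's factorization (it is precisely Corollary~\ref{cor:sol1}), and it is not independently accessible by the route you sketch: the Chevalley decomposition $S(V^*)\simeq S(V^*)^G\otimes\CG$ only brings you to $\Hilb\bigl((\CG\otimes\bigwedge V^*)^G;q,u\bigr)\big/\prod_i(1-q^{d_i})$, and determining the grading on $(\CG\otimes\bigwedge^kV^*)^G$ --- the $\bigwedge^kV^*$-exponents of $G$ --- is exactly the content of the theorem, so ``separating the characters'' does not close the circle. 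The standard repair is to run the same $\det^{-1}$-semi-invariant argument at every degree $k$: for $\eta\in(S(V^*)\otimes\bigwedge^kV^*)^G$ and each $k$-subset $I\subset\{1,\dots,r\}$ with complement $I^c$, the scalar $p_I$ defined by $\eta\wedge\bigwedge_{j\in I^c}d\g_j=p_I\,(x_1\wedge\cdots\wedge x_r)$ is again a $\det^{-1}$-semi-invariant, hence $p_I=(\det J)\,h_I$ with $h_I\in S(V^*)^G$, and $\eta=\sum_{I}\pm\, h_I\,\bigwedge_{i\in I}d\g_i$ follows by wedging the difference against all complementary $(r-k)$-fold products of $d\g$'s and using $\det J\neq0$. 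This yields surjectivity directly and makes the Hilbert-series identity a consequence rather than a hypothesis.
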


\noindent Computing the trace on $S(V^*) \otimes \bigwedge V^*$ of the projection to the $G$-invariants $\frac{1}{|G|}\sum_{g\in G} g$ gives a formula for the Poincar\'e series as a sum over the group.

\begin{corollary}[{\cite{solomon1963invariants}}]
\label{cor:sol1}
\[\Hilb\left(\left(S(V^*)\otimes \bigwedge V^* \right)^G;x,u\right) = \frac{1}{|G|} \sum_{g \in G} \frac{\mathrm{det}(1+ug |_V)}{\det(1-x g|_V)} = \prod_{i=1}^r\frac{1+x^{e_i^G(V)}u}{1-x_i^{d_i^G}}.\]
\end{corollary}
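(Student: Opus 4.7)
The plan is to compute the bigraded Poincar\'e series of $\bigl(S(V^*)\otimes\bigwedge V^*\bigr)^G$ in two different ways, yielding the two equalities in turn. For the first equality, I would use that the averaging operator $\pi=\frac{1}{|G|}\sum_{g\in G}g$ is the projection onto $G$-invariants, so its trace on each bigraded component of $S(V^*)\otimes\bigwedge V^*$ equals the dimension of the corresponding invariant subspace. For a fixed $g\in G$ with eigenvalues $\lambda_1,\ldots,\lambda_r$ on $V$, the graded character of $g$ on $S(V^*)$ is the classical Molien expression $\det(1-xg|_V)^{-1}=\prod_i(1-x\lambda_i)^{-1}$, while the graded character on $\bigwedge V^*$ is $\det(1+ug|_V)=\prod_i(1+u\lambda_i)$, both computed by diagonalizing $g$ and reading off traces on symmetric and exterior powers (up to the standard reindexing $g\leftrightarrow g^{-1}$ in the average that harmlessly swaps the convention between $V$ and $V^*$). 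Multiplying and averaging over $G$ produces the sum-over-group expression in the middle.

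For the second equality, I would apply Solomon's theorem (\Cref{thm:solomon}), which provides the bigraded algebra isomorphism $\bigl(S(V^*)\otimes\bigwedge V^*\bigr)^G\simeq S(V^*)^G\otimes\bigwedge\UGd$. By the Chevalley--Shephard--Todd theorem (\Cref{thm:shephard_todd}), $S(V^*)^G=\CC[\g_1,\ldots,\g_r]$ is a polynomial algebra in generators of degrees $d_i^G$, contributing $\prod_i(1-x^{d_i^G})^{-1}$ to the bigraded Hilbert series. Each basis element $d\g_i=\sum_j\frac{\partial\g_i}{\partial x_j}\otimes x_j$ of $\UGd$ has bidegree $(d_i^G-1,1)=(e_i^G(V),1)$: the partial derivative drops the polynomial degree by one, while the tensor factor $x_j$ occupies a single slot in the exterior algebra. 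Hence $\bigwedge\UGd$ contributes $\prod_i(1+x^{e_i^G(V)}u)$, and the product of these two Hilbert series is the claimed factorization on the right.

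The main step doing the work is Solomon's theorem itself, which is already cited as \Cref{thm:solomon}; no further obstacle arises. The only care needed is correctly tracking the bidegree of each $d\g_i$ and using that the $d\g_i$ are free generators of an exterior algebra over $S(V^*)^G$, so that the Hilbert series factor as a product indexed by $i$, both of which are immediate consequences of \Cref{thm:solomon}.
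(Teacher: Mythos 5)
Your proposal is correct and is essentially the paper's own argument: the paper derives this corollary exactly by computing the bigraded Hilbert series once as the trace of the averaging projection $\frac{1}{|G|}\sum_{g\in G}g$ on $S(V^*)\otimes\bigwedge V^*$ (Molien), and once via \Cref{thm:solomon}, with the generators $d\g_i$ contributing bidegree $(e_i^G(V),1)$. Nothing is missing; the $x_i$ in the denominator of the stated product is a typo in the paper for $x$, not an issue with your argument.
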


\noindent Specializing \Cref{cor:sol1} to $u=q(1-x)-1$, and taking the limit as $x \to 1$ gives the Shephard-Todd formula from \Cref{eq:shephard_todd}.

\subsection{Orlik-Solomon's Theorem}
\label{sec:o-s_thm}

The reflection representation $V$ of $G\subset\mathrm{GL}(V)$ can be realized over $\mathrm{Q}(\zeta_{G})$, where $\zeta_{G}$ denotes a primitive $|G|$-th root of unity, in the sense that there is a choice of basis for $V$ with respect to which $G\subset\mathrm{GL}_r(\mathbb{Q}(\zeta_G))$. For $\sigma\in\mathrm{Gal}\bigl(\mathbb{Q}(\zeta_{G})/\mathbb{Q}\bigr)$, the \defn{Galois twist} $V^\sigma$ of $V$ is the representation of $G$ on the same underlying vector space $V$ obtained by applying $\sigma$ to the matrix entries of $g\in\mathrm{GL}_r(\mathbb{Q}(\zeta_{G}))$. Alternatively and equivalently, one can define $V^\sigma$ by applying $\tilde{\sigma}$ to the matrix entries of $g$ in terms of any basis of $V$, for $\tilde{\sigma}$ any extension of $\sigma$ to a field automorphism of $\mathbb{C}$.

In~\cite{orlik1980unitary}, Orlik and Solomon gave the following generalization of \Cref{thm:solomon}.

\begin{theorem}[{\cite[Corollary 3.2]{orlik1980unitary}}]
\[\left(S(V^*)\otimes \bigwedge (V^\sigma)^*\right)^G\simeq S(V^*)^G\otimes\bigwedge \UGsd,\] where the degrees of the homogeneous generators of $\UGsd:=(\CG\otimes \Vsd)^G$ are $e_i^G(\Vs)$, the $\Vs$-exponents of $G$.
\label{thm:os2}
\end{theorem}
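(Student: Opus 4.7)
The plan is to follow Solomon's proof of \Cref{thm:solomon} with $\Vsd$ replacing $V^*$ on the exterior factor, using that Chevalley's graded decomposition $S(V^*)\simeq S(V^*)^G\otimes \CG$ is $G$-equivariant and that $S(V^*)^G$ carries the trivial $G$-action. Tensoring with $\bigwedge\Vsd$ and taking $G$-invariants yields
\[
\left(S(V^*)\otimes \bigwedge \Vsd\right)^G \simeq S(V^*)^G \otimes \left(\CG\otimes \bigwedge \Vsd\right)^G,
\]
so the left-hand side is automatically free over $S(V^*)^G$, with basis a basis of the finite-dimensional graded vector space $(\CG\otimes \bigwedge \Vsd)^G$. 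The theorem thus reduces to identifying this latter vector space with $\bigwedge\UGsd$ as a graded vector space.

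I would then pin down $\UGsd$ itself using the fact that $\CG$ affords the regular $G$-representation as an ungraded $G$-module: for any $G$-module $M$ this gives $\dim(\CG\otimes M^*)^G = \dim M$. Taking $M=\Vs$ yields $\dim\UGsd=r$, and taking $M=\bigwedge^k\Vs$ yields $\dim(\CG\otimes \bigwedge^k\Vsd)^G = \binom{r}{k} = \dim\bigwedge^k\UGsd$. The graded refinement of $\UGsd$ records the multiplicity of $\Vs$ in each graded piece of $\CG$, which by \Cref{eq:fake} is the fake degree $f_{\Vs}(q)=\sum_{i=1}^r q^{e_i^G(\Vs)}$; so a homogeneous basis $\xi_1,\ldots,\xi_r$ of $\UGsd$ has $\deg(\xi_i)=e_i^G(\Vs)$, giving the claimed degrees.

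It then remains to exhibit a graded isomorphism $\Phi\colon \bigwedge\UGsd \xrightarrow{\sim} (\CG\otimes \bigwedge\Vsd)^G$. Wedging along the $\Vsd$-factor while multiplying the symmetric-algebra components (viewed inside $S(V^*)\otimes \bigwedge\Vsd$ and projected back via Chevalley) produces a canonical graded $G$-equivariant map of the right dimensions. The main obstacle is the injectivity of $\Phi$ --- equivalently, the freeness of the $\xi_i$ as exterior-algebra generators inside $(S(V^*)\otimes \bigwedge\Vsd)^G$. I would address this by computing the bigraded Poincar\'e series of $(S(V^*)\otimes \bigwedge\Vsd)^G$ via Molien's formula
\[
\frac{1}{|G|}\sum_{g\in G}\frac{\det(1+u\,g|_{\Vs})}{\det(1-x\,g|_V)}
\]
and showing it equals $\prod_{i=1}^r (1+u\,x^{e_i^G(\Vs)})/(1-x^{d_i^G})$; combined with Chevalley's factorization this matches the bigraded series of $S(V^*)^G\otimes \bigwedge\UGsd$ term-by-term, forcing $\Phi$ to be bijective in each bidegree.
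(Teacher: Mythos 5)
The core difficulty your proposal does not address is the factorization
\[
\frac{1}{|G|}\sum_{g\in G}\frac{\det(1+u\,g|_{\Vs})}{\det(1-x\,g|_V)} = \prod_{i=1}^r\frac{1+u\,x^{e_i^G(\Vs)}}{1-x^{d_i^G}},
\]
which you invoke to force bijectivity of $\Phi$ but do not actually establish. This identity is precisely \Cref{cor:sol2}, and it is deduced \emph{from} \Cref{thm:os2}, not the other way around. Expanding both sides in powers of $u$ and using Molien's formula together with Chevalley's factorization $S(V^*)\simeq S(V^*)^G\otimes\CG$ (which you already have), the claimed identity is equivalent to the family of fake-degree identities $f_{\bigwedge^k\Vs}(x) = e_k\bigl(x^{e_1^G(\Vs)},\ldots,x^{e_r^G(\Vs)}\bigr)$ for all $0\leq k\leq r$, where $e_k$ is the $k$-th elementary symmetric polynomial; the case $k=r$ alone is exactly the amenability of $\Vs$ (\Cref{sec:amenable1}), which is a nontrivial theorem \cite[Theorem~2.13]{orlik1980unitary}. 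There is no way to verify the Molien factorization by ``just computing'' the left side; it is obtained as a corollary of the structure theorem, so your plan is circular.

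Steps (1)--(3) of your proposal are correct, but they give only an \emph{ungraded} match: the regular-representation count gives $\dim(\CG\otimes\bigwedge^k\Vsd)^G = \binom{r}{k}$ and pins down the degree-$1$ piece $\UGsd$, but it says nothing about the $x$-degrees in which $(\CG\otimes\bigwedge^k\Vsd)^G$ is concentrated for $k\geq 2$. The genuinely missing input is the amenability of $\Vs$: as \Cref{rem:amenable_equivalence} explains, amenability is equivalent to the image of $\bigwedge^r\UGsd$ under the natural map being all of $(\CG\otimes\bigwedge^r\Vsd)^G$, rather than a positive-degree $S(V^*)^G$-multiple of it. Once this top-wedge statement is in hand, a Solomon-type degree argument (cf.\ \Cref{thm:OS_amenable} and \Cref{lem:amenable_top_wedge}) shows that homogeneous generators of $(\CG\otimes\Vsd)^G$ freely generate $(S(V^*)\otimes\bigwedge\Vsd)^G$ as an exterior $S(V^*)^G$-algebra, and the Molien factorization then \emph{follows}. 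Assuming that factorization in order to prove freeness is not a proof.
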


\noindent Computing the Poincar\'e series in two ways as in~\Cref{cor:sol1} gives the following formula.

\begin{corollary}[{\cite[Theorem 3.3]{orlik1980unitary}}]
\[\Hilb\left(\left(S(V^*)\otimes \bigwedge (V^\sigma)^* \right)^G;x,u\right) = \frac{1}{|G|} \sum_{g \in G} \frac{\mathrm{det}(1+ug |_{V^\sigma})}{\det(1-x g|_V)} = \prod_{i=1}^r\frac{1+x^{e_i^G(V^\sigma)}u}{1-x_i^{d_i^G}}.\]
\label{cor:sol2}
\end{corollary}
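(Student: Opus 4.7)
The plan is to compute the bigraded Hilbert series of $\bigl(S(V^*)\otimes\bigwedge(V^\sigma)^*\bigr)^G$ in two standard ways and equate them, tracking polynomial grading by $x$ and exterior grading by $u$. The argument parallels the derivation of \Cref{cor:sol1} from \Cref{thm:solomon} performed just above, the only change being that we work with $V^\sigma$ in place of $V$ on the exterior factor.

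For the middle expression, I would take the graded trace of the Reynolds operator $\frac{1}{|G|}\sum_{g\in G}g$ on each bigraded piece, using that this trace equals the Hilbert series of the $G$-invariant subspace. The classical Molien identity on the symmetric algebra gives $\sum_k\mathrm{tr}\bigl(g\bigm|_{S^k(V^*)}\bigr)\,x^k = \det(1-xg^{-1}|_V)^{-1}$, and its exterior analogue gives $\sum_k\mathrm{tr}\bigl(g\bigm|_{\bigwedge^k(V^\sigma)^*}\bigr)\,u^k = \det(1+ug^{-1}|_{V^\sigma})$, both obtained by direct eigenvalue bookkeeping (the inverses reflecting the contragredient action on duals). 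Multiplicativity of the trace under tensor products, followed by the reindexing $g\mapsto g^{-1}$ (a bijection of $G$), then yields the middle expression as written.

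For the product expression I would invoke \Cref{thm:os2}, which furnishes a bigraded isomorphism $\bigl(S(V^*)\otimes\bigwedge(V^\sigma)^*\bigr)^G\simeq S(V^*)^G\otimes\bigwedge\UGsd$, with $\UGsd$ a free module on homogeneous generators of degrees $e_i^G(V^\sigma)$. By \Cref{thm:shephard_todd} we have $\Hilb(S(V^*)^G;x)=\prod_{i=1}^r(1-x^{d_i^G})^{-1}$, while the Hilbert series of an exterior algebra on homogeneous generators of degrees $e_i^G(V^\sigma)$ is $\prod_{i=1}^r(1+ux^{e_i^G(V^\sigma)})$. Multiplying these produces the right-hand side of the claimed identity.

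I do not foresee any genuine obstacle---this is a textbook Molien-series computation, which is why the result is presented as a corollary. The only mildly delicate point is the substitution $g\mapsto g^{-1}$ needed to reconcile the Molien factors (which naturally involve the contragredient actions on $V^*$ and $(V^\sigma)^*$) with the $V$ and $V^\sigma$ expressions appearing in the statement; this is routine but worth flagging to ensure consistency of conventions.
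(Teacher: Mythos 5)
Your proof is correct and is exactly the approach the paper intends: the middle expression arises from taking the graded trace of the Reynolds operator together with Molien's formula on the symmetric and exterior factors (with the $g\mapsto g^{-1}$ reindexing to convert the contragredient actions on $V^*$ and $(V^\sigma)^*$ into actions on $V$ and $V^\sigma$), while the product expression follows from the bigraded isomorphism of \Cref{thm:os2} together with the Hilbert series of $S(V^*)^G$ and of the exterior algebra on generators of degrees $e_i^G(V^\sigma)$. The paper itself presents this as a routine ``two ways'' computation paralleling \Cref{cor:sol1}, and your write-up fills in precisely those details.
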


\noindent Specializing~\Cref{cor:sol2} to $u=q(1-x)-1$ and taking the limit as $x \to 1$ gives \Cref{thm:orlik_solomon}.

\subsection{Amenable Representations}
\label{sec:amenable1}
More generally, an $m$-dimensional $G$-module $M$ satisfying $\sum_{i=1}^m e_i^G(M) = e^G_1(\bigwedge^m M)$ is called \defn{amenable}. This amenability condition can be shown to be equivalent to the requirement that $(S(V^*)\otimes\bigwedge M^*)^G$ be a free exterior algebra over $S(V^*)^G$.  Since, in particular, Galois twists $V^\sigma$ of the reflection representation $V$ of $G$ are amenable, the following theorem generalizes~\Cref{thm:os2}.

\begin{theorem}[{\cite[Theorem 3.1]{orlik1980unitary}}]\label{thm:OS_amenable}
Let $M$ be an amenable $G$-module.  Then \[\left(S(V^*)\otimes \bigwedge M^*\right)^G\simeq S(V^*)^G\otimes\bigwedge \UGMd,\] where $\UGMd:=(\CG\otimes M^*)^G$ and the degrees of the homogeneous generators of $\UGMd$ are $e_i^G(M)$, the $M$-exponents of $G$.
\end{theorem}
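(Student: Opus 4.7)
The plan is to adapt the proofs of \Cref{thm:solomon,thm:os2} to the setting of an arbitrary amenable $G$-module $M$. The fake degree formula \eqref{eq:fake}, combined with the fact that $\CG$ affords the regular representation of $G$ (ungraded), shows that $\UGMd = (\CG \otimes M^*)^G$ is $m$-dimensional with a homogeneous basis $\omega_1, \ldots, \omega_m$ in degrees $e_1^G(M), \ldots, e_m^G(M)$. Viewing each $\omega_i$ in $(S(V^*) \otimes M^*)^G$ via $\CG \hookrightarrow S(V^*)$ defines a natural $S(V^*)^G$-linear graded map
\[\phi \colon S(V^*)^G \otimes \bigwedge \UGMd \longrightarrow \left(S(V^*) \otimes \bigwedge M^*\right)^G, \qquad f \otimes \omega_{i_1} \wedge \cdots \wedge \omega_{i_k} \longmapsto f\cdot \omega_{i_1} \wedge \cdots \wedge \omega_{i_k},\]
and the task reduces to showing $\phi$ is an isomorphism.

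The crux is the nonvanishing of the top wedge $\omega_1\wedge\cdots\wedge\omega_m$ in $(S(V^*)\otimes\bigwedge^m M^*)^G$. Since $\bigwedge^m M$ is one-dimensional as a $G$-module, $(\CG \otimes \bigwedge^m M^*)^G$ is one-dimensional and concentrated in the single degree $e_1^G(\bigwedge^m M)$, so $(S(V^*)\otimes\bigwedge^m M^*)^G$ is a rank-one free $S(V^*)^G$-module with canonical generator in that degree. The top wedge has degree $\sum_i e_i^G(M)$, and the amenability hypothesis $\sum_i e_i^G(M)=e_1^G(\bigwedge^m M)$ places it precisely at the degree of the generator; a direct calculation with harmonic representatives establishes that the image is nonzero, and this is the one substantive use of amenability. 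From here, a standard wedging argument yields injectivity of $\phi$ in every exterior degree: any $S(V^*)^G$-linear relation among the wedges $\omega_{i_1}\wedge\cdots\wedge\omega_{i_k}$ can be wedged with the complementary $\omega_j$'s to produce a nonzero multiple of the top wedge, a contradiction.

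Surjectivity is obtained by comparing Hilbert series via Molien's formula,
\[\Hilb\bigl((S(V^*)\otimes\bigwedge M^*)^G;x,u\bigr)=\frac{1}{|G|}\sum_{g\in G}\frac{\det(1+ug|_M)}{\det(1-xg|_V)},\]
against the evident Hilbert series $\prod_i(1+ux^{e_i^G(M)})/\prod_j(1-x^{d_j^G})$ of the domain. Both modules are free graded $S(V^*)^G$-modules of total rank $2^m$, matching the dimension of $\bigwedge M^* = \bigoplus_k \bigwedge^k M^*$ recovered via the regular representation property of $\CG$. The $u^m$-coefficients of the two Hilbert series agree by the amenability-driven calculation that secured nonvanishing of the top wedge, and the $u^0$-coefficients agree tautologically; combined with the established injectivity of $\phi$ on each exterior power, pairing $(S(V^*)\otimes\bigwedge^k M^*)^G$ with $(S(V^*)\otimes\bigwedge^{m-k}M^*)^G$ into the now fully-understood top piece via exterior multiplication transfers the matching of ranks and degrees to every intermediate $k$, forcing $\phi$ to be surjective. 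I expect the main obstacle to be precisely the nonvanishing of the top wedge $\omega_1\wedge\cdots\wedge\omega_m$---this is the sole step where amenability does real work, and once it is in hand the remainder of the argument is formally parallel to the proofs of \Cref{thm:solomon} and \Cref{thm:os2}.
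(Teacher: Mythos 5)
This theorem is quoted directly from Orlik and Solomon \cite[Theorem 3.1]{orlik1980unitary} and the paper states it without proof, so your sketch has to stand on its own; the skeleton (define $\phi$, reduce to the top wedge, wedge with complements for injectivity, compare Hilbert series for surjectivity) does track the Orlik--Solomon approach, but two steps are misstated. First, the nonvanishing of $\omega_1\wedge\cdots\wedge\omega_m$ is \emph{not} where amenability acts. Writing $\omega_i=\sum_j a_{ij}\otimes y_j$ with $a_{ij}\in\CG$, the top wedge equals $\det(a_{ij})\otimes(y_1\wedge\cdots\wedge y_m)$, and $\det(a_{ij})\neq 0$ holds for \emph{every} $G$-module of rank $m$, because the $\omega_i$ remain a basis of $(\CC(V)\otimes M^*)^G$ over the fraction field of $S(V^*)^G$; this is exactly the ``always injective'' part of the paper's \Cref{rem:amenable_equivalence}, where $\bigwedge^m\UGMd$ is identified with $a\otimes(\CG\otimes\bigwedge^m M^*)^G$ for some \emph{nonzero} $a\in S(V^*)^G$. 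Amenability is precisely the statement that this $a$ is a unit, i.e.\ that the top wedge \emph{generates} the rank-one free $S(V^*)^G$-module $(S(V^*)\otimes\bigwedge^m M^*)^G$. Your degree count establishes $\deg a=0$, but $a\neq 0$ is unconditional and logically prior.

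Second, the surjectivity step has a genuine gap. An injective map of free graded $S(V^*)^G$-modules of the same rank need not be onto (multiplication by a positive-degree invariant $S(V^*)^G\hookrightarrow S(V^*)^G$ is the basic example), so matching ranks and matching the $u^0$ and $u^m$ coefficients do not by themselves pin down the intermediate $u$-degrees; the sentence ``pairing \ldots\ transfers the matching'' asserts, rather than proves, that the exterior pairing into the top piece is perfect. The argument that actually closes this (Solomon's) constructs a preimage directly: for $\eta\in(S(V^*)\otimes\bigwedge^k M^*)^G$, define $c_I\in S(V^*)^G$ by $\eta\wedge\omega_{I^c}=c_I\cdot\omega_1\wedge\cdots\wedge\omega_m$ (this uses that the top wedge \emph{generates}, the second genuine use of amenability), set $\eta'=\sum_I\pm c_I\,\omega_I$, and conclude $\eta=\eta'$ because $\eta-\eta'$ wedges trivially against every $\omega_{I^c}$ while the $\omega_i$ become a basis of $\CC(V)\otimes M^*$ after inverting $\det(a_{ij})$. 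With surjectivity in hand, your Hilbert-series comparison (\Cref{cor:amen}) becomes a corollary of the theorem rather than an input to its proof.
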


From this, one can pursue the usual strategy of computing the Poincar\'e series of $(S(V^*)\otimes\bigwedge M^*)^G$ in two different ways to obtain the following.

\begin{corollary} If $M$ is an amenable $G$-module, then
\[\Hilb\left(\left(S(V^*)\otimes \bigwedge M^* \right)^G;x,u\right) = \frac{1}{|G|} \sum_{g \in G} \frac{\mathrm{det}(1+ug |_{M})}{\det(1-x g|_V)} = \frac{\prod_{i=1}^m (1+x^{e_i^G(M)}u)}{\prod_{i=1}^r (1-x_i^{d_i^G})}.\]
\label{cor:amen}
\end{corollary}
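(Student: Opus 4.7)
The approach is the bivariate Molien method already employed to derive \Cref{cor:sol1} from \Cref{thm:solomon} and \Cref{cor:sol2} from \Cref{thm:os2}, now using \Cref{thm:OS_amenable} to handle a general amenable $G$-module $M$.

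For the first equality, I would compute the bigraded Hilbert series of the $G$-invariants as the group average of traces on $S(V^*)\otimes\bigwedge M^*$. For each $g\in G$, the $x$-generating series for traces on symmetric powers of $V^*$ is $1/\det(1-xg|_{V^*})$ and the $u$-generating series for traces on exterior powers of $M^*$ is $\det(1+ug|_{M^*})$. Multiplying these and averaging over $G$ yields
\[
\Hilb\left(\left(S(V^*)\otimes\bigwedge M^*\right)^G;x,u\right) = \frac{1}{|G|}\sum_{g\in G}\frac{\det(1+ug|_{M^*})}{\det(1-xg|_{V^*})}.
\]
Reindexing the sum by $g\mapsto g^{-1}$ swaps the contragredient eigenvalues back to the original ones, recovering the middle expression of the corollary. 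This duality bookkeeping is the same as in the proofs of \Cref{cor:sol1,cor:sol2}.

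For the second equality, I would apply \Cref{thm:OS_amenable} to obtain the bigraded isomorphism $(S(V^*)\otimes\bigwedge M^*)^G\simeq S(V^*)^G\otimes\bigwedge\UGMd$, where the generators of the exterior factor on the right-hand side are $m$ homogeneous elements of polynomial degrees $e_1^G(M),\ldots,e_m^G(M)$ and exterior degree one. Hilbert series multiply on tensor products; by \Cref{thm:shephard_todd} the polynomial ring $S(V^*)^G$ contributes $\prod_{i=1}^r(1-x^{d_i^G})^{-1}$, and the free exterior algebra on the described generators contributes $\prod_{i=1}^m(1+x^{e_i^G(M)}u)$. Multiplying these factors gives the rightmost expression.

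The only nontrivial ingredient is \Cref{thm:OS_amenable} itself, which is already stated above; once it is in hand, the corollary is a routine Hilbert series computation with no real obstacle. The most delicate point is merely the standard reindexing in the Molien-type average to pass from duals to original representations, which is handled identically to the corresponding step in the derivations of \Cref{cor:sol1,cor:sol2}.
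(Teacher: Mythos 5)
Your proposal is correct and follows exactly the route the paper intends: the paper gives no detailed proof, merely noting that one computes the Poincar\'e series of $(S(V^*)\otimes\bigwedge M^*)^G$ in two ways---the Molien-type group average for the middle expression and the free-module decomposition of \Cref{thm:OS_amenable} for the product on the right. Your handling of the dual/contragredient bookkeeping via the substitution $g\mapsto g^{-1}$ is the standard step and is exactly what is needed.
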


\noindent However, it is no longer clear how to specialize~\Cref{cor:amen} in the same way as~\Cref{cor:sol1,cor:sol2} to obtain an analogue of \Cref{eq:shephard_todd} and \Cref{thm:orlik_solomon} in this generality.

\begin{definition}\label{def:OS_space}
Let $M$ be a $G$-module. We define the \defn{Orlik-Solomon space} $\UGM$ to be the dual $G$-module to $\UGMd:=(\CG\otimes M^*)^G$. In the special case where $M=V$, we write $\UG:=U_V^G$.  When $M=V^\sigma$, we write $\UGs:=U_{V^\sigma}^G$.
\end{definition}

\section{Normal Reflection Subgroups}
\label{sec:quotients}

The following theorem is a special case of results in~\cite{bessis2002quotients} (where they consider the more general notion of ``bon sous-groupe distingu\'e'' in lieu of our normal reflection subgroup $N$ of $G$). We emphasize that our proof follows the ideas in~\cite{bessis2002quotients}, specialized to our more restricted setting.

{
\renewcommand{\thetheorem}{\ref{thm:reflection_action}}
\begin{theorem}
Let $G\subset \mathrm{GL}(V)$ be a complex reflection group and let $N\trianglelefteq G$ be a normal reflection subgroup. Then $G/N=H$ acts as a reflection group on the vector space $V/N=\VN$.
\end{theorem}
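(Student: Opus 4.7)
The plan is to apply the Chevalley-Shephard-Todd theorem (\Cref{thm:shephard_todd}) twice: first to $N$, to identify $V/N$ with an affine $r$-space endowed with an induced $H$-action that I will arrange to be linear, and then to $G$, to conclude that the resulting linear $H$-action is generated by reflections. Applying \Cref{thm:shephard_todd} to $N$ writes $S(V^*)^N=\mathbb{C}[f_1,\ldots,f_r]$ as a polynomial algebra on $r$ algebraically independent homogeneous generators of degrees $d_i^N$, identifying $\VN=V/N$ with $\mathbb{A}^r$ via the coordinates $f_i$. Since $N\trianglelefteq G$, the action of $G$ on $S(V^*)$ restricts to an action on $S(V^*)^N$ preserving both the grading and the multiplication, and $N$ itself acts trivially on $S(V^*)^N$, so this descends to an action of $H=G/N$ on $\VN$.

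The main (and only substantive) obstacle is to show that the $f_i$ can be chosen so that this $H$-action is actually linear in the coordinates $(f_1,\ldots,f_r)$. I would argue degree-by-degree: in each degree $d\geq 1$, the homogeneous component $S(V^*)^N_d$ is a $G$-submodule of $S(V^*)^N$, and the subspace of \emph{decomposable} invariants (sums of products of pairs of positive-degree $N$-invariants) is a $G$-submodule of $S(V^*)^N_d$. By Maschke's theorem (since $G$ is finite) there is a $G$-stable complement $W^N_d\subset S(V^*)^N_d$ to the decomposables. Choosing the $f_i$ to be a homogeneous basis of $W:=\bigoplus_d W^N_d$ produces fundamental $N$-invariants whose $\mathbb{C}$-span is $G$-stable: for any $g\in G$, the image $g\cdot f_i\in W^N_{d_i^N}$ is a $\mathbb{C}$-linear combination of those $f_j$ with $d_j^N=d_i^N$, so the induced action of $H$ on $\VN\cong\mathbb{A}^r$ is linear in the coordinates $f_i$.

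Finally, I would invoke the converse direction of \Cref{thm:shephard_todd} to upgrade linearity to a reflection action. The ring of $H$-invariants on $\VN$ is $\mathbb{C}[\VN]^H=\bigl(S(V^*)^N\bigr)^{G/N}=S(V^*)^G$, which by \Cref{thm:shephard_todd} applied to $G$ is again a polynomial ring on $r$ homogeneous generators. Hence the image of $H$ in $\mathrm{GL}(\VN)$ is a finite linear group whose invariant ring on the $r$-dimensional space $\VN\cong\mathbb{A}^r$ is polynomial, and so it acts as a complex reflection group by \Cref{thm:shephard_todd}.
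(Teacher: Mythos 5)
Your proof is correct and follows essentially the same two-step strategy as the paper: your Maschke-theorem choice of a $G$-stable graded complement to the decomposable invariants inside each $S(V^*)^N_d$ is precisely the paper's choice of a graded $H$-equivariant section of $\pi:I_+\twoheadrightarrow I_+/I_+^2$, and both arguments then conclude via the converse direction of \Cref{thm:shephard_todd} using the identification $(S(V^*)^N)^H=S(V^*)^G$. The one step you elide is at the very end: \Cref{thm:shephard_todd} applied to $H\subset\mathrm{GL}(\VN)$ requires algebraically independent generators of $S(\VNd)^H$ that are homogeneous for the grading of $S(\VNd)$ (the $\mathbf{N}$-grading), whereas the generators of $S(V^*)^G$ produced by \Cref{thm:shephard_todd} for $G$ are a priori only $\mathbf{x}$-homogeneous. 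The paper repairs this by observing that the $H$-action preserves both gradings, so the fundamental $G$-invariants may be chosen bihomogeneous, yielding $\mathbf{N}$-homogeneous algebraically independent generators $\h_i(\mathbf{N})=\g_i(\mathbf{x})$; your appeal to ``the invariant ring is abstractly polynomial'' also closes this gap via the standard fact that a connected graded algebra abstractly isomorphic to a polynomial ring admits homogeneous polynomial generators, but this should be said explicitly --- and the paper's bihomogeneous choice is not merely cosmetic, since it sets up the indexing $d_i^N\cdot d_i^H=d_i^G$ used throughout the rest of the paper.
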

\addtocounter{theorem}{-1}
}

\begin{proof} We claim that there exist homogeneous generators $\n_1,\dots,\n_r$ of $S(V^*)^N$ such that $\VN^*=\mathrm{span}_\mathbb{C}\{\n_1,\ldots,\n_r\}$ is $H$-stable. By Theorem~\ref{thm:shephard_todd}, the ring of $N$-invariants $S(V^*)^N=\mathbb{C}[\tilde{\n}_1,\dots,\tilde{\n}_r]$ for some homogeneous algebraically independent $\tilde{\n}_i$. Let $I_+\subset S(V^*)^N$ be the ideal generated by homogeneous $N$-invariants of positive degree. Then both $I_+$ and $I_+^2$ are $H$-stable homogeneous ideals, and therefore the algebraic tangent space $I_+/I_+^2$ to $\VN=V/N$ at $0$ inherits a graded action of $H$ that is compatible with the (graded) quotient map $\pi:I_+\twoheadrightarrow I_+/I_+^2$. Hence there exists a graded $H$-equivariant section $\varphi:I_+/I_+^2\rightarrow I_+$. Letting $\n_i=\varphi\circ\pi(\tilde{\n}_i)$ we see that $\n_1,\dots,\n_r$ are still homogeneous algebraically independent generators for $S(V^*)^N$ with $\mathrm{deg}(\n_i)=\mathrm{deg}(\tilde{\n}_i)$ and $\VN^*:=\spn_\CC\{\n_1,\dots,\n_r\}$ is $H$-stable.

Let $\mathbf{x}=\{x_1,\dots,x_r\}$ denote a dual basis for $V$ and $\mathbf{\n}=\{\n_1,\dots,\n_r\}$ denote an $H$-stable basis for $\VNd$ as above. Since the action of $H$ on the polynomial ring $S(\VNd)=S(V^*)^N$ is obtained from the action of $G$ on $S(V^*)$, it preserves $\mathbf{x}$-degrees as well as $\mathbf{N}$-degrees. Therefore we may choose the fundamental $G$-invariants $\g_i(\mathbf{x})\in S(V^*)^G=(S(V^*)^N)^H=S(\VNd)^H$ to be simultaneously $\mathbf{x}$-homogeneous and $\mathbf{N}$-homogeneous, so that $\h_i(\mathbf{N}):=\g_i(\mathbf{x})$ form a set of $\mathbf{N}$-homogeneous generators for the polynomial ring $S(\VNd)^H$. Since any algebraic relation $f(\h_1,\ldots,\h_r)=0$ would result in an algebraic relation $f(\g_1,\ldots,\g_r)=0$, the $\mathbf{N}$-homogeneous $\h_i(\mathbf{N})$ must be algebraically independent. By \Cref{thm:shephard_todd}, $H$ is a complex reflection group.\qedhere
\end{proof}

\begin{remark}
\label{rem:indexing} As pointed out in \cite[Proposition~3.16]{bessis2002quotients} and explained in \cite[Section~8.3]{bonnafe2006twisted}, the action of $H$ on $\VN$ is often not irreducible. Denote by $\DN=\{d_1^N,\dots,d_r^N\}$ the set of degrees $d_i^N:=\mathrm{deg}_\mathbf{x}(\n_i)$. For $d\in\DN$, let us write $\mathbf{N}_d:=\{\n_i\in\mathbf{N} \ | \ \mathrm{deg}_\mathbf{x}(\n_i)=d\}$ and $\VNdd:=\spn_\CC\mathbf{N}_d$, so that $\VNd\simeq\bigoplus_{d\in\DN}\VNdd$ and $S(\VNd)\simeq\bigotimes_{d\in\DN}S(\VNdd)$ as graded $H$-modules. For $d\in\DN$, let $H_{(d)}\subset\mathrm{GL}(\VNdd)$ denote the image of $H$ in $\mathrm{GL}(\VNdd)$, so that $H$ decomposes as a direct product $\bigtimes_{d\in\DN} H_{(d)}$, where each $H_{(d)}$ is a reflection group on the graded dual $\VN_{-d}$ of $\VNdd$. We see that in fact there exist algebraically independent (bi)homogeneous polynomials $\smash{\h_i(\mathbf{N}_{d_i^N})\in S(\VN^*_{d_i^N})}$ such that $S(\VNd)^H=\CC[\h_1,\dots,\h_r]$, so that the fundamental $G$-invariants $\g_1,\dots,\g_r$ generating $S(V^*)^G=(S(V^*)^N)^H=S(E^*)^H$ can be expressed as
\begin{equation}\label{rearrangement-eq} \g_i(\mathbf{x})=\h_i(\mathbf{\n}_{d_i^N}).\end{equation}
Having chosen the fundamental $G$-invariants $\g_i$ to have degrees $d_1^G\leq \dots \leq d_r^G$, we implicitly index the $N$-degrees $d_i^N$ and $H$-degrees $d_i^H$ so that \Cref{rearrangement-eq} is satisfied.
\end{remark}

\begin{remark}
Unlike in the real case~\cite{gal2005normal,bonnafe2010semidirect}, $H$ is not necessarily a reflection subgroup of $G$ or even a subgroup of $G$.  A counterexample is given by $G_8=G \triangleright N=G(4,2,2)$, so that $G/N \simeq \mathfrak{S}_3$---but $\mathfrak{S}_3$ is not a subgroup of $G_8$.
\end{remark}

\subsection{Harmonic polynomials}\label{sec:coinvariants}
The space of $N$-harmonic polynomials $\CN\subset S(V^*)$ is $G$-stable \cite[Proposition~12.2]{lehrer2009unitary} and isomorphic to the regular representation of $N$ \cite[Corollary~9.37]{lehrer2009unitary}. The space of $H$-harmonic polynomials $\CH\subset S(\VNd)$ is bigraded, by $\mathbf{x}$-degree as well as by $\mathbf{N}$-degree, and therefore it admits a $\CC$-basis of $H$-harmonic polynomials that are simultaneously $\mathbf{x}$-homogeneous and $\mathbf{N}$-homogeneous. The following result elaborates on \cite[Corollary~8.4]{bonnafe2006twisted} in our present setting.

\begin{proposition}
\label{prop:coinvariant_decomposition}
There is a graded $G$-module isomorphism $\CG\simeq\CH\otimes\CN$ such that $(\CG)^N\simeq\CH$ as graded $H$-modules.
\end{proposition}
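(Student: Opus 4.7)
The plan is to apply Chevalley's theorem (\Cref{thm:shephard_todd}) successively to $N$ and to $H$, and then compare the resulting graded decompositions of $S(V^*)$ to the one coming from Chevalley applied directly to $G$. First, Chevalley for $N$ gives a graded $N$-module isomorphism $S(V^*)\simeq S(V^*)^N\otimes\CN$ via multiplication. Since $N\trianglelefteq G$, the group $G$ lies inside the normalizer of $N$ in $\mathrm{GL}(V)$, so by \Cref{rem:harmonic_stable} the subspace $\CN$ is $G$-stable; as $S(V^*)^N$ is manifestly $G$-stable, this Chevalley isomorphism promotes to one of graded $G$-modules.

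Next I apply Chevalley to $H$ acting on $\VN$ to obtain $S(\VN^*)\simeq S(\VN^*)^H\otimes\CH$ as graded $H$-modules, where $\CH$ is $H$-stable inside $S(\VN^*)$. Using the identifications $S(V^*)^N=S(\VN^*)$ from \Cref{rem:indexing} and $S(V^*)^G=(S(V^*)^N)^H=S(\VN^*)^H$, and observing that the $G$-action on $S(V^*)^N$ factors through $H=G/N$ (so $H$-stable subspaces of $S(V^*)^N$ are automatically $G$-stable), I splice the two decompositions to conclude
\[ S(V^*)\simeq S(V^*)^G\otimes\CH\otimes\CN \]
as graded $G$-modules, where the grading throughout is the $\mathbf{x}$-degree (and on $\CH$ it is the $\mathbf{x}$-degree inherited from $S(\VN^*)\subset S(V^*)$). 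On the other hand, Chevalley applied to $G$ gives $S(V^*)\simeq S(V^*)^G\otimes\CG$ as graded $G$-modules. Quotienting both decompositions by the ideal $\IG=S(V^*)^G_+\cdot S(V^*)$ kills the $S(V^*)^G$ factor in each, and yields the first assertion $\CG\simeq\CH\otimes\CN$ as graded $G$-modules.

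For the second assertion, I take $N$-invariants on both sides of $\CG\simeq\CH\otimes\CN$. Since $\CH\subset S(V^*)^N$, the subgroup $N$ acts trivially on the first tensor factor, so $(\CH\otimes\CN)^N=\CH\otimes\CN^N$. Because $\CN$ realizes the regular representation of $N$, its $N$-invariants form a one-dimensional space; concretely, any positive-degree $N$-invariant polynomial in $\CN$ would lie in both $\IN$ and the chosen complement $\CN$ to $\IN$, hence must vanish, leaving only the constants. This gives $(\CG)^N\simeq\CH$, and since the residual $G$-action on each side factors through $H=G/N$, the isomorphism is $H$-equivariant and grading-preserving.

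The main delicate point I anticipate is guaranteeing that both partial Chevalley decompositions respect the full $G$-action rather than merely the $N$- or $H$-equivariance guaranteed by Chevalley's theorem in isolation. For the $N$-decomposition this relies on \Cref{rem:harmonic_stable}; for the $H$-decomposition it relies on the availability of the $H$-stable set of generators $\mathbf{N}$ of $S(V^*)^N$ produced in the proof of \Cref{thm:reflection_action}, together with the observation that $H$-stability and $G$-stability coincide on subspaces of $S(V^*)^N$.
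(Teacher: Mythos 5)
Your proof is correct and follows essentially the same route as the paper: splice the Chevalley decompositions for $N$ and for $H$ into $S(V^*)\simeq S(V^*)^G\otimes\CH\otimes\CN$ as graded $G$-modules, compare with $S(V^*)\simeq S(V^*)^G\otimes\CG$, and then take $N$-invariants using $(\CN)^N=\CC$. The only (cosmetic) difference is in the final identification: you quotient both decompositions by $\IG$ directly, whereas the paper exhibits a surjection $\CH\otimes\CN\twoheadrightarrow S(V^*)/\IG$ and concludes by the dimension count $|H|\cdot|N|=|G|$; both arguments are valid.
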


\begin{proof}
Putting together the isomorphisms $S(V^*)\simeq S(\VNd)\otimes \CN$ as graded $N$-modules, $S(\VNd)\simeq S(\VNd)^H\otimes \CH$ as bigraded $H$-modules (equivalently, as bigraded $G$-modules of $N$-invariants), and $S(V^*)^G=S(\VNd)^H$, we obtain the isomorphism $S(V^*)\simeq S(V^*)^G\otimes\CH\otimes\CN$ as graded $G$-modules. Letting $\pi:S(V^*)\rightarrow S(V^*)/\IG$ denote the canonical projection, we see that $\CC\otimes\CH\otimes\CN$ must surject onto the image $S(V^*)/\IG\simeq \CG$, because $S(V^*)^G$ is generated as a $\CC$-algebra by the generators of the ideal $\IG$. But this surjection $\CH\otimes\CN\rightarrow \CG$ of graded $G$-modules must then be an isomorphism, because\[\mathrm{dim}_\CC(\CH\otimes\CN)=\mathrm{dim}_\CC(\CH)\cdot\mathrm{dim}_\CC(\CN)=|H|\cdot|N|=|G|=\mathrm{dim}_\CC(\CG).\]
Since $\CH\subset S(\VNd)$ consists of $N$-invariants, we have $(\CH\otimes\CN)^N=\CH\otimes(\CN)^N=\CH\otimes\CC$, and therefore $(\CG)^N\simeq\CH$ as graded $H$-modules, as claimed.\end{proof}

\begin{remark}
It follows from the isomorphism $S(V^*)\simeq S(V^*)^G\otimes\CG$ as graded $G$-modules that the Poincar\'e series of $\CG$ can be written~\cite[Theorem~B]{chevalley1955invariants}
\[\Hilb(\CG;q) = \frac{\Hilb(S(\Vd);q)}{\Hilb(S(V^*)^G;q)} = \prod_{i=1}^r\frac{1-q^{d_i^G}}{1-q}.\]
Since $|G|=d_1\cdots d_r$, it is natural to ask for a combinatorial interpretation of $\Hilb(\CG;q)$ as a weighted sum over the elements of $G$.  When $G$ is a \emph{real} reflection group, $G$ acts simply transitively on the connected components of its real hyperplane complement. Assigning some base connected component $R_e$ to the identity element $e \in G$ gives a bijection between group elements and regions sending $g \leftrightarrow R_g$, and we can define the statistic $\mathrm{inv}(g)$ to be the number of \defn{inversions} of $g \in G$---that is, the number of hyperplanes separating the connected component $R_g$ from $R_e$. Then the Poincar\'e series of $\CG$ has the well-known interpretation \[\Hilb(\CG;q) = \prod_{i=1}^r\frac{1-q^{d_i^G}}{1-q} = \sum_{g\in G} q^{\mathrm{inv}(g)}.\]  In principle, our \Cref{prop:coinvariant_decomposition} gives a new method for producing such combinatorial interpretations: given interpretations of $\Hilb(\CH;q)$ and $\Hilb(\CN;q)$ and coset representatives $\{h\}$ for $G/N$, we can write \begin{align*}\Hilb(\CG;q) &=\Hilb(\CH \otimes \CN;q) = \Hilb(\CH;q)\cdot\Hilb(\CN;q) \\&= \left(\sum_{hN\in H} q^{\mathrm{stat}_H(hN)}\right)\left(\sum_{n\in N} q^{\mathrm{stat}_N(n)}\right) \\ &= \sum_{hn \in G} q^{\mathrm{stat}_H(hN)+\mathrm{stat}_N(n)}.\end{align*}
\end{remark}

\begin{remark}It follows from \Cref{prop:coinvariant_decomposition} that for any $G$-module $M$ the (dual) Orlik-Solomon space $\UNMd=(\CN\otimes M^*)^N$ as in \Cref{def:OS_space} can be considered as an $H$-module, or (equivalently) as a $G$-module of $N$-invariants.
\end{remark}

The following result is useful in determining the Orlik-Solomon space $\UNM$ in particular examples (cf.~\Cref{sec:examples}) up to graded $G$-module isomorphism.

\begin{lemma}\label{lem:fake_OS_space} Let $\eta_N: S(V^*)\rightarrow\CN$ denote the $G$-equivariant projection onto the space of $N$-harmonic polynomials. Let $M$ be a $G$-module of rank $m$, and suppose that $\tilde{u}_1,\dots,\tilde{u}_m$ form a homogeneous basis for $(S(V^*)\otimes M^*)^N$ as a free $S(V^*)^N$-module such that $(\tilde{U}_M^N)^*:=\spn_\CC\{\tilde{u}_1,\dots,\tilde{u}_m\}$ is $G$-stable. Then the restriction of the projection $\eta_N\otimes 1:(\tilde{U}_M^N)^*\rightarrow \UNMd$ is a graded $G$-module isomorphism.
\end{lemma}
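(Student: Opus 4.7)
The plan is to identify the restricted map with the augmentation quotient of the free $S(V^*)^N$-module $F := (S(V^*)\otimes M^*)^N$, after which the freeness and $G$-stability hypotheses will immediately yield the desired isomorphism.

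First, I would verify that $\eta_N\otimes 1$ is $G$-equivariant and preserves degree. Since $G$ normalizes $N$ in $\mathrm{GL}(V)$, \Cref{rem:harmonic_stable} ensures that $\CN$ is $G$-stable, so the projection $\eta_N$ along the graded decomposition $S(V^*)=\IN\oplus \CN$ is both $G$-equivariant and homogeneous; these properties pass to $\eta_N\otimes 1$, and in particular to its restriction to the graded $G$-submodule $(\tilde{U}_M^N)^*\subseteq F$.

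Next, I would use Chevalley's theorem to identify the action of $\eta_N\otimes 1$ concretely. From the graded $N$-module isomorphism $S(V^*)\simeq S(V^*)^N\otimes \CN$, tensoring with $M^*$ and taking $N$-invariants yields a graded $G$-equivariant isomorphism of $S(V^*)^N$-modules $F\simeq S(V^*)^N\otimes \UNMd$. Let $I_+\subset S(V^*)^N$ denote the augmentation ideal of positive-degree $N$-invariants. A direct computation shows that $\eta_N(f\cdot p) = f(0)\cdot p$ for all $f\in S(V^*)^N$ and $p\in \CN$: indeed, $(f-f(0))\cdot p$ lies in $I_+\cdot S(V^*)=\IN$ and so is killed by $\eta_N$, while $f(0)\cdot p\in \CN$ is fixed. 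Extending linearly in $M^*$, the map $\eta_N\otimes 1$ acts on $F\simeq S(V^*)^N\otimes \UNMd$ by evaluation at $0$ on the first factor, i.e., as the quotient map $F\twoheadrightarrow F/I_+F\simeq \UNMd$.

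Finally, since $\tilde{u}_1,\dots,\tilde{u}_m$ form a homogeneous free $S(V^*)^N$-basis for $F$, decomposing $S(V^*)^N=\mathbb{C}\oplus I_+$ gives
\[F \;=\; \bigoplus_{i=1}^m S(V^*)^N\cdot \tilde{u}_i \;=\; (\tilde{U}_M^N)^*\oplus I_+\cdot (\tilde{U}_M^N)^* \;=\; (\tilde{U}_M^N)^*\oplus I_+F,\]
exhibiting $(\tilde{U}_M^N)^*$ as a graded $G$-stable complement to $I_+F$ in $F$. Hence the restriction of the quotient $F\twoheadrightarrow F/I_+F\simeq \UNMd$ to $(\tilde{U}_M^N)^*$ is a graded $G$-module isomorphism, and by the preceding step this restriction coincides with $\eta_N\otimes 1|_{(\tilde{U}_M^N)^*}$, as desired. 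I do not anticipate a significant obstacle; the main subtlety is the concrete identification of $\eta_N\otimes 1$ with the augmentation quotient, which follows cleanly from Chevalley's theorem.
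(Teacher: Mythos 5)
Your proof is correct, and it reaches the conclusion by a somewhat cleaner route than the paper's. Both arguments rest on the same two facts: the kernel of $\eta_N$ is the ideal $\IN=I_+\cdot S(V^*)$ generated by positive-degree $N$-invariants, and $(S(V^*)\otimes M^*)^N$ is free over $S(V^*)^N$ with $\UNMd$ as a space of generators. The paper finishes in coordinates: it writes the change-of-basis matrix $[p_{ij}]$ over $S(V^*)^N$ between the $\tilde{u}_i$ and a homogeneous basis $u_i$ of $\UNMd$, observes that applying $\eta_N$ replaces $[p_{ij}]$ by $[p_{ij}(0)]$, and deduces invertibility of $[p_{ij}(0)]$ from a degree count on $\det[\tilde{a}_{ij}]$ and $\det[a_{ij}]$. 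You instead identify $\eta_N\otimes 1$ with the augmentation quotient $F\twoheadrightarrow F/I_+F$ and invoke the decomposition $F=(\tilde{U}_M^N)^*\oplus I_+F$, a graded Nakayama-type statement that follows immediately from freeness; this bypasses the determinant computation entirely and also sidesteps the (harmless but implicit) point that the degrees of any homogeneous free basis must coincide with the $M$-exponents of $N$. The two proofs are equivalent in content---your direct-sum decomposition is precisely the assertion that $[p_{ij}(0)]$ is invertible---but yours is coordinate-free and self-contained.
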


\begin{proof} Let $u_1,\dots,u_m$ be a homogeneous basis for $\UNMd:=(\CN\otimes M^*)^N$. We may assume that $\mathrm{deg}(u_i)=e_i^N(M)=\mathrm{deg}(\tilde{u}_i)$ for each $i=1,\dots,m$. Let $y_1,\dots,y_m$ be a basis for $M^*$, and write $\tilde{u}_i:=\sum_{j=1}^m\tilde{a}_{ij}\otimes y_j$ and $u_i=\sum_{j=1}^ma_{ij}\otimes y_j$, where $\tilde{a}_{ij}\in S(V^*)$ and $a_{ij}\in\CN$, and every non-zero $\tilde{a}_{ij}$ and $a_{ij}$ is homogeneous of degree $e_i^N(M)$. Since the $\tilde{u}_i$ and the $u_i$ form bases for $S(V^*)\otimes M^*)^N$ as a free $S(V^*)^N$-module, there exists a matrix $[p_{ij}]\in\mathrm{GL}_m(S(V^*)^N)$ such that $[\tilde{a}_{ij}]=[p_{ij}]\cdot [a_{ij}]$. Since the kernel of $\eta_N:S(V^*)\rightarrow \CN$ is precisely the ideal generated by homogeneous $N$-invariants of positive degree, it follows that $[\eta_N(\tilde{a}_{ij})]=[p_{ij}(0)]\cdot[a_{ij}]$, where $p_{ij}(0)$ denotes the evaluation at $0\in V$. Since $\mathrm{deg}(\mathrm{det}[\tilde{a}_{ij}])=\sum_{i=1}^m e_i^N(M)=\mathrm{deg}(\mathrm{det}[a_{ij}])$, it follows that $\mathrm{det}[p_{ij}]\in\CC^\times$, and therefore $[p_{ij}(0)]\in\mathrm{GL}_m(\CC)$. Since $\eta_N\otimes 1$ is $G$-equivariant and $(\tilde{U}_M^N)^*$ is $G$-stable, $\eta_N\otimes 1:(\tilde{U}_M^N)^*\rightarrow\UNMd$ is a graded isomorphism of $G$-modules.  \end{proof}

\subsection{Numerology}

The following consequence of \Cref{prop:coinvariant_decomposition} establishes the first equation of \Cref{thm:numbers} in more generality.
\begin{corollary}\label{cor:general_numerology}
Let $M$ be a $G$-module and define $\UNM$ as in \Cref{def:OS_space}. For a suitable choice of indexing we have \[e_i^N(M)+e_i^G(\UNM)=e_i^G(M).\]
\end{corollary}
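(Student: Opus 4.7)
The plan is to exploit the graded $G$-module isomorphism $\CG\simeq\CH\otimes\CN$ from \Cref{prop:coinvariant_decomposition} to rewrite the fake-degree polynomial $f_M^G(q)=\sum_i q^{e_i^G(M)}$ in a form that simultaneously exposes the exponents $e_i^N(M)$ and $e_i^G(\UNM)$, and then match the resulting multisets of exponents.

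First I would note that $N$ acts trivially on $\CH$, since the latter carries only the pulled-back $H$-action from $G/N$. Using $\CG\simeq\CH\otimes\CN$ and taking first $N$- then $H$-invariants therefore yields
\begin{equation*}
(\CG\otimes M^*)^G=\bigl(\CH\otimes(\CN\otimes M^*)^N\bigr)^H=(\CH\otimes\UNMd)^H
\end{equation*}
as graded vector spaces, where $\UNMd$ inherits its grading from $\CN$. Taking Hilbert series gives $f_M^G(q)=\Hilb((\CH\otimes\UNMd)^H;q)$ with respect to the total grading.

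Next I would decompose $\UNMd$ as a graded $H$-module. Each finite-dimensional homogeneous piece $(\UNMd)_k$ splits into simple $H$-modules, so collecting these across degrees produces a decomposition $\UNMd\cong\bigoplus_\alpha T_\alpha$ in which each $T_\alpha$ is simple as an $H$-module and concentrated in a single graded degree $k_\alpha$. Applying the standard identity $\Hilb((\CH\otimes T)^H;q)=f_{T^*}^H(q)$ to each $T_\alpha$ treated as ungraded, and accounting for the pure-degree shift $q^{k_\alpha}$, then produces
\begin{equation*}
f_M^G(q)=\sum_\alpha q^{k_\alpha}f_{T_\alpha^*}^H(q)=\sum_\alpha\sum_{j=1}^{\dim T_\alpha}q^{\,k_\alpha+e_j^H(T_\alpha^*)}.
\end{equation*}

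Finally I would check that the multiset of pairs $(k_\alpha,\,e_j^H(T_\alpha^*))$ projects to the two exponent families we want. Projection to the first coordinate, weighted by multiplicities $\dim T_\alpha$, recovers $\sum_\alpha(\dim T_\alpha)q^{k_\alpha}=\Hilb(\UNMd;q)=f_M^N(q)$, exhibiting the $k_\alpha$'s (with the correct multiplicities) as the $N$-exponents $e_i^N(M)$. Projection to the second coordinate gives $\sum_\alpha f_{T_\alpha^*}^H(q)=f_{\UNM}^H(q)$, which equals $f_{\UNM}^G(q)$ because $N$ acts trivially on $\UNM$ and so the $G$- and $H$-fake degrees coincide; this exhibits the $e_j^H(T_\alpha^*)$'s as the exponents $e_i^G(\UNM)$. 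The $(\alpha,j)$ indexing of pairs therefore provides an indexing in which $e_i^N(M)+e_i^G(\UNM)=e_i^G(M)$.

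The main subtlety to watch out for is the careful distinction between the \emph{graded} Hilbert series of $\UNMd$ inside $\CN\otimes M^*$, which encodes the $N$-exponents $e_i^N(M)$, and the \emph{ungraded} fake degree $f_{\UNM}^G$, which encodes the exponents $e_i^G(\UNM)$; conflating the two gradings would trivialize or falsify the identity. Both gradings genuinely appear simultaneously in the single total-graded Hilbert series $\Hilb((\CH\otimes\UNMd)^H;q)$, and the content of the corollary is precisely that they factor cleanly across a graded decomposition of $\UNMd$ into pure-degree $H$-simples.
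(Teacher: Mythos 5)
Your overall strategy is the paper's: use $\CG\simeq\CH\otimes\CN$ from \Cref{prop:coinvariant_decomposition} to identify $(\CG\otimes M^*)^G$ with $(\CH\otimes\UNMd)^H$, split $\UNMd$ along its internal ($\mathbf{x}$-)grading, and read the two exponent families off the two gradings of the resulting bigraded space. (The paper does this by choosing a basis $u_i^G=\sum_j a_{ij}^H\otimes u_j^N$ with $[a_{ij}^H]$ square-diagonal with respect to the graded decomposition of $\UNMd$; your decomposition into pure-degree $H$-simples $T_\alpha$ is the same device, packaged as a Hilbert-series computation.)

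There is one genuine error, though a repairable one: the assertion that $\sum_\alpha f_{T_\alpha^*}^H(q)=f_{\UNM}^H(q)=f_{\UNM}^G(q)$ ``because $N$ acts trivially on $\UNM$ and so the $G$- and $H$-fake degrees coincide.'' They do not coincide. The $H$-fake degree of an $H$-module is computed from the intrinsic grading of $\CH$, i.e.\ the $\mathbf{N}$-degree in which each fundamental $N$-invariant $\n_i$ has degree one, whereas the $G$-fake degree is computed from the $\mathbf{x}$-degree, in which $\n_i$ has degree $d_i^N$. These two gradings of $\CH$ genuinely differ---this is exactly the content of the second identity of \Cref{thm:numbers}, $d_i^N\cdot e_i^H(\VNs)=e_i^G(\VNs)$. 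If you literally invoke the standard identity $\Hilb((\CH\otimes T)^H;q)=f_{T^*}^H(q)$ with the intrinsic $H$-grading, then both your total-degree formula $f_M^G(q)=\sum_\alpha q^{k_\alpha}f_{T_\alpha^*}^H(q)$ and your identification of the second coordinates with the $e_i^G(\UNM)$ become false (the left- and right-hand sides are graded differently). The fix is to use the $\mathbf{x}$-grading on $\CH$ throughout: the $\mathbf{x}$-graded Hilbert series of $(\CH\otimes T_\alpha)^H$ equals the $G$-fake degree $f_{T_\alpha^*}^G(q)$ of the pulled-back $G$-module $T_\alpha^*$, because $(\CG\otimes W^*)^G\simeq((\CG)^N\otimes W^*)^H\simeq(\CH\otimes W^*)^H$ preserves $\mathbf{x}$-degrees for any $H$-module $W$ (this is \Cref{lem:amenability_N_trivial}(i), a direct consequence of \Cref{prop:coinvariant_decomposition}). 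With that substitution your argument is correct and coincides with the paper's.
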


\begin{proof}Applying \Cref{prop:coinvariant_decomposition}, we see that \[(\CG\otimes M^*)^G\simeq((\CH\otimes\CN\otimes M^*)^N)^H=(\CH\otimes(\CN\otimes M^*)^N)^H=(\CH\otimes \UNMd)^H.\]
Let $m$ be the rank of $M$, and let $y_1,\dots,y_m$ be a basis of $M^*$. Let $a_{ij}^N\in \CN$ be $\mathbf{x}$-homogeneous such that $u_i^N:=\sum_{j=1}^ma_{ij}^N\otimes y_j$ form a basis for $\UNMd$ with $\mathrm{deg}_\mathbf{x}(u_i^N)=e_i^N(M)$. Letting $\ENM:=\{e_1^N(M),\dots,e_m^N(M)\}$ denote the set of $M$-exponents of $N$, we see that \[\UNMd\simeq\bigoplus_{e\in\ENM}\UNMde\] as an $H$-module. Therefore, there exist $\mathbf{x}$-homogeneous $a_{ij}^H\in\CH$ such that $u_i^G:=\sum_{j=1}^ma_{ij}^H\otimes u_j^N$ form a basis for $(\CH\otimes \UNMd)^H$ with $\mathrm{deg}_\mathbf{x}(u_i^G)=e_i^G(U_M^N)$ and such that $a_{ij}^H=0$ whenever $\mathrm{deg}_\mathbf{x}(u_j^N)\neq e_i^N(M)$ (in other words, the matrix $[a_{ij}^H]$ may be chosen to be square-diagonal corresponding to the graded decomposition of $\UNMd$). But then the $u_i^G$ form an $\mathbf{x}$-homogeneous basis for $(\CG\otimes M^*)^G\simeq(\CH\otimes(U_M^N)^*)^H$, and we see that \[e_i^G(M)=\mathrm{deg}_\mathbf{x}(u_i^G)=e_i^G(U_M^N)+e_i^N(M).\qedhere\]
\end{proof}

\subsection{Amenability}\label{sec:amenability}
Recall from \Cref{sec:amenable1} that a $G$-module $M$ of rank $m$ is called \defn{amenable} if $\sum_{i=1}^m e_i^G(M)=e_1^G(\bigwedge^mM)$.

\begin{remark} \label{rem:amenable_equivalence}Regardless of whether a $G$-module $M$ of rank $m$ is amenable, we always have a natural $S(V^*)^G$-linear injective homomorphism \[S(V^*)^G\otimes{\textstyle\bigwedge^m}(\CG\otimes M^*)^G\hookrightarrow S(V^*)^G\otimes(\CG\otimes{\textstyle \bigwedge^m}M^*)^G\] in $(S(V^*)\otimes\bigwedge M^*)^G$, which identifies $\bigwedge^m(\CG\otimes M^*)^G$ with $a\otimes (\CG\otimes\bigwedge^m M^*)^G$ for some $0\neq a\in S(V^*)^G$. The amenability of $M$ as a $G$-module is therefore precisely the requirement that $a\in\mathbb{C}$ be a constant polynomial. For brevity and convenience, we will summarize this equivalent characterization of amenability in the following \Cref{lem:amenable_top_wedge} (cf.~\cite[Theorem~2.10]{bonnafe2006twisted}).
\end{remark}

\begin{lemma}\label{lem:amenable_top_wedge}
A $G$-module $M$ of rank $m$ is amenable if and only if \[{\textstyle\bigwedge^m}((\CG\otimes M^*)^G)\simeq(\CG\otimes{\textstyle\bigwedge^m}M^*)^G.\] 
\end{lemma}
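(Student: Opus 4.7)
The strategy is to observe that both sides are one-dimensional trivial $G$-modules and then compare their graded degrees. Using the graded $G$-module isomorphism $S(V^*)\simeq S(V^*)^G\otimes\CG$, taking $G$-invariants of $S(V^*)\otimes M^*$ yields the free $S(V^*)^G$-module decomposition $(S(V^*)\otimes M^*)^G\simeq S(V^*)^G\otimes\UGMd$, so $\UGMd=(\CG\otimes M^*)^G$ has $\mathbb{C}$-dimension $m$ and its top exterior power $\bigwedge^m((\CG\otimes M^*)^G)$ is one-dimensional. Similarly, $\bigwedge^m M^*$ is a one-dimensional $G$-module (a linear character of $G$), and since $\CG$ affords the regular representation of $G$, the space $(\CG\otimes\bigwedge^m M^*)^G$ is also one-dimensional. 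Both sides of the desired isomorphism therefore carry the trivial $G$-action and are concentrated in a single graded degree.

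Next, I would fix a homogeneous $\mathbb{C}$-basis $u_1,\dots,u_m$ of $\UGMd$ with $\deg u_i=e_i^G(M)$, and choose a nonzero generator $\omega$ of $(\CG\otimes\bigwedge^m M^*)^G$, necessarily homogeneous of degree $e_1^G(\bigwedge^m M)$. The wedge $u_1\wedge\cdots\wedge u_m$ lies in $(S(V^*)\otimes\bigwedge^m M^*)^G$, which is the rank-one free $S(V^*)^G$-module generated by $\omega$ (again via $S(V^*)\simeq S(V^*)^G\otimes\CG$ applied to the one-dimensional character $\bigwedge^m M^*$). Hence there exists a unique $a\in S(V^*)^G$ with
$$u_1\wedge\cdots\wedge u_m=a\cdot\omega,$$
and $a\neq 0$ because the $u_i$ form a free $S(V^*)^G$-basis of $(S(V^*)\otimes M^*)^G$, so their top wedge is nonzero.

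Finally, I would compare degrees: since $\deg(u_1\wedge\cdots\wedge u_m)=\sum_{i=1}^m e_i^G(M)$, we obtain $\deg a=\sum_{i=1}^m e_i^G(M)-e_1^G(\bigwedge^m M)\geq 0$. Two one-dimensional trivial graded $G$-modules are isomorphic as graded $G$-modules if and only if their nonzero graded pieces sit in the same degree, equivalently $\deg a=0$, equivalently $a\in\mathbb{C}^\times$, equivalently $\sum_{i=1}^m e_i^G(M)=e_1^G(\bigwedge^m M)$, which is precisely the amenability condition. The argument is essentially a bookkeeping exercise making the content of \Cref{rem:amenable_equivalence} explicit, so I do not anticipate a genuine obstacle; the only delicate points are the dimension counts and the rank-one freeness of $(S(V^*)\otimes\bigwedge^m M^*)^G$ over $S(V^*)^G$, which together ensure that $a$ is well-defined and nonzero.
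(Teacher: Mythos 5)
Your argument is correct and is essentially the paper's own: the lemma is justified there by \Cref{rem:amenable_equivalence}, which introduces exactly your element $a\in S(V^*)^G$ via the injection $S(V^*)^G\otimes\bigwedge^m(\CG\otimes M^*)^G\hookrightarrow S(V^*)^G\otimes(\CG\otimes\bigwedge^m M^*)^G$ and characterizes amenability as $\deg a=0$. You have simply made explicit the dimension counts, the nonvanishing of $a$ (the standard Orlik--Solomon fact that the coefficient matrix of a free basis has nonzero determinant), and the degree bookkeeping that the paper delegates to the remark and to the citation of Bonnaf\'e--Lehrer--Michel.
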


The decomposition $\CG\simeq\CH\otimes\CN$ from \Cref{prop:coinvariant_decomposition} and its \Cref{cor:general_numerology} have many useful consequences. Although the following result can be proved more directly by appealing to \cite[Corollary~8.7]{bonnafe2006twisted}, we provide a full proof.

\begin{lemma} \label{lem:amenability_N_trivial}Let $M$ be an $H$-module. Then:
\begin{enumerate}[(i)]
\item $(\CG\otimes M^*)^G\simeq (\CH\otimes M^*)^H$; and
\item $M$ is amenable as a $G$-module if and only if $M$ is amenable as an $H$-module.
\end{enumerate}
\end{lemma}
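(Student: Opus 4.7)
The plan for part (i) is to compute the $G$-invariants in two stages, using $G$-invariants equal $H$-invariants of $N$-invariants. Since $M$ is an $H$-module, it is pulled back to a $G$-module along the quotient $G\twoheadrightarrow H=G/N$, so $N$ acts trivially on $M^*$. This means the $N$-action on $\CG\otimes M^*$ is entirely carried by the first tensor factor, so that
\[(\CG\otimes M^*)^N = (\CG)^N\otimes M^*.\]
Now I apply \Cref{prop:coinvariant_decomposition}, which gives $(\CG)^N\simeq\CH$ as graded $H$-modules. Taking $H$-invariants on both sides then yields
\[(\CG\otimes M^*)^G = \bigl((\CG\otimes M^*)^N\bigr)^H = \bigl((\CG)^N\otimes M^*\bigr)^H\simeq (\CH\otimes M^*)^H,\]
which is the graded vector space isomorphism claimed in (i).

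For part (ii), my plan is to reduce it to part (i) together with \Cref{lem:amenable_top_wedge}. Let $m$ be the rank of $M$ as a $G$-module, which equals its rank as an $H$-module. The top exterior power $\bigwedge^m M^*$ is a one-dimensional $H$-module and hence also a $G$-module on which $N$ acts trivially. Thus part (i) applies to both $M$ and $\bigwedge^m M^*$, giving graded isomorphisms
\[(\CG\otimes M^*)^G\simeq(\CH\otimes M^*)^H\qquad\text{and}\qquad\bigl(\CG\otimes{\textstyle\bigwedge^m}M^*\bigr)^G\simeq\bigl(\CH\otimes{\textstyle\bigwedge^m}M^*\bigr)^H.\]
Since taking top exterior powers preserves graded vector space isomorphisms, the first of these implies ${\bigwedge^m}(\CG\otimes M^*)^G\simeq{\bigwedge^m}(\CH\otimes M^*)^H$ as graded vector spaces. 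By \Cref{lem:amenable_top_wedge}, $M$ is amenable as a $G$-module exactly when $\bigwedge^m(\CG\otimes M^*)^G\simeq(\CG\otimes\bigwedge^m M^*)^G$, and amenable as an $H$-module exactly when $\bigwedge^m(\CH\otimes M^*)^H\simeq(\CH\otimes\bigwedge^m M^*)^H$. Combining the three displayed isomorphisms, these two conditions are equivalent, which establishes (ii).

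The main obstacle I anticipate is a bookkeeping issue rather than a conceptual one: verifying that the isomorphism supplied by \Cref{prop:coinvariant_decomposition} is indeed graded and $H$-equivariant in the sense needed, so that tensoring with $M^*$ and taking $H$-invariants transports it to a graded isomorphism of the Orlik--Solomon-type spaces. The rest of the argument is then essentially formal manipulation of invariants, relying crucially on the fact that $M$ being an $H$-module forces $N$ to act trivially, which is what allows the invariants to split across the tensor product in the first place.
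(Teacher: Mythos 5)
Your proof is correct and follows essentially the same route as the paper: for part (i), compute $G$-invariants as $H$-invariants of $N$-invariants, use triviality of the $N$-action on $M^*$ to pull $(\CG)^N\simeq\CH$ across the tensor product, and for part (ii), apply part (i) to both $M^*$ and $\bigwedge^m M^*$ and invoke the characterization of amenability in \Cref{lem:amenable_top_wedge}. The only cosmetic difference is that the paper explicitly sets up the distinction between $\mathbf{x}$-degrees (for $G$-exponents) and $\mathbf{N}$-degrees (for $H$-exponents) before running the identical argument.
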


\begin{proof} The $M$-exponents $e_i^G(M)$ of $G$ are obtained as the $\mathbf{x}$-degrees of any $\mathbf{x}$-homogeneous basis for $(\CG\otimes M^*)$, and the $M$-exponents $e_i^H(M)$ of $H$ are analogously obtained as the $\mathbf{N}$-degrees of of any (bi)homogeneous basis for $(\CH\otimes M^*)^H$. Letting $m$ denote the rank of $M$, the fake degree $e_1^G(\bigwedge^m M)$ is the $\mathbf{x}$-degree of a basis element for $(\CG\otimes\bigwedge^m M^*)^G$, and the fake degree $e_i^H(\bigwedge^m M)$ is the $\mathbf{N}$-degree of a basis element for $(\CH\otimes \bigwedge^m M^*)^H$. Since both $M$ and $\CH$ are $N$-invariant, and $\CG\simeq\CH\otimes\CN$ as graded $G$-modules such that $(\CG)^N\simeq\CH$ by \Cref{prop:coinvariant_decomposition}, we obtain \[(\CG\otimes M^*)^G=((\CG\otimes M^*)^N)^H= ((\CG)^N\otimes M^*)^H\simeq (\CH\otimes M^*)^H,\] which establishes (i).

Applying (i) to $\bigwedge^mM^*$ we have that $(\CG\otimes{\textstyle\bigwedge^m}M^*)^G\simeq (\CH\otimes{\textstyle\bigwedge^m}M^*)^H$. By \Cref{lem:amenable_top_wedge}, the amenability of $M$ as a $G$-module and the amenability of $M$ as an $H$-module are respectively equivalent to \[{\textstyle\bigwedge^m}((\CG\otimes M^*)^G)\simeq(\CG\otimes{\textstyle\bigwedge^m}M^*)^G\quad\text{and}\quad{\textstyle\bigwedge^m}((\CH\otimes M^*)^H)\simeq(\CH\otimes{\textstyle\bigwedge^m}M^*)^H,\] which proves (ii). \end{proof}

\begin{remark}
Since the Orlik-Solomon space $\UNM$ of \Cref{def:OS_space} is trivial as an $N$-module, it follows from \Cref{lem:amenability_N_trivial} that $\UNM$ is amenable as a $G$-module if and only if it is amenable as an $H$-module. From now on we will just say that $\UNM$ is amenable whenever these equivalent conditions hold.
\end{remark}

\begin{proposition}\label{prop:amenable_equivalence} Let $M$ be a $G$-module and define $\UNM$ as in \Cref{def:OS_space}. Then:

\begin{enumerate}[(i)]
\item If $M$ is amenable as a $G$-module, then $\UNM$ is amenable.
\item If $M$ is amenable as an $N$-module and $\UNM$ is amenable, then $M$ is amenable as a $G$-module.
\end{enumerate}
\end{proposition}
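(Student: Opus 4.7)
For (ii), I would chain two applications of Theorem~\ref{thm:OS_amenable}. First, $N$-amenability of $M$ gives $(S(V^*)\otimes\bigwedge M^*)^N \simeq S(V^*)^N\otimes\bigwedge\UNMd$ as free exterior algebras over $S(V^*)^N$. Taking $H$-invariants and using amenability of $\UNM$ (equivalent to $H$-amenability on $\VN$ by Lemma~\ref{lem:amenability_N_trivial}(ii)), applying Theorem~\ref{thm:OS_amenable} to the $H$-action on $\VN$ with module $\UNM$ would yield $(S(V^*)\otimes\bigwedge M^*)^G \simeq S(\VNd)^H\otimes\bigwedge(\CH\otimes\UNMd)^H \simeq S(V^*)^G\otimes\bigwedge\UGMd$, using Lemma~\ref{lem:amenability_N_trivial}(i) for the last identification. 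This is the characterization of $M$ being $G$-amenable, proving (ii).

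For (i), let $m$ denote the rank of $M$, and define the amenability defect $\delta_K(X) := \sum_{i=1}^n e_i^K(X) - e_1^K(\bigwedge^n X)$ for any $K$-module $X$ of rank $n$. By Lemma~\ref{lem:amenable_top_wedge} and Remark~\ref{rem:amenable_equivalence} we always have $\delta_K(X)\geq 0$, with equality iff $X$ is $K$-amenable. Applying Corollary~\ref{cor:general_numerology} to both $M$ and the rank-$1$ module $\bigwedge^m M$ and rearranging would yield the identity $\delta_G(M) - \delta_G(\UNM) = \delta_N(M) + \bigl[e_1^G(\bigwedge^m\UNM) - e_1^G(U_{\bigwedge^m M}^N)\bigr]$. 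It would then suffice to prove the inequality $e_1^G(\bigwedge^m\UNM) - e_1^G(U_{\bigwedge^m M}^N) \geq -\delta_N(M)$, since combined with the identity this would give $\delta_G(M) \geq \delta_G(\UNM) \geq 0$, forcing $\delta_G(\UNM) = 0$ (amenability of $\UNM$) whenever $\delta_G(M) = 0$.

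The main obstacle will be establishing the preceding inequality, which I plan to obtain by applying Remark~\ref{rem:amenable_equivalence} with $N$ in place of $G$: this produces a natural $S(V^*)^N$-linear injection $S(V^*)^N\otimes\bigwedge^m\UNMd \hookrightarrow S(V^*)^N\otimes (U_{\bigwedge^m M}^N)^*$ inside the ambient $(S(V^*)\otimes\bigwedge^m M^*)^N$, realized as multiplication by the $N$-amenability proportionality element $a_N\in S(V^*)^N$ of degree $\delta_N(M)$. Taking $H$-invariants and applying Theorem~\ref{thm:OS_amenable} to the $H$-action on $\VN$ with the two $1$-dimensional $H$-modules in play (both automatically amenable), both sides would become free rank-$1$ modules over $S(V^*)^G$, and the induced $S(V^*)^G$-linear injection would be multiplication by an element $b\in S(V^*)^G$ of degree $\deg(b) = \bigl[e_1^G(\bigwedge^m\UNM) - e_1^G(U_{\bigwedge^m M}^N)\bigr] + \delta_N(M) \geq 0$. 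The most delicate step will be to identify the $H$-characters of the two $1$-dimensional modules $\bigwedge^m\UNMd$ and $(U_{\bigwedge^m M}^N)^*$ and to correctly track the $\mathbf{x}$-degrees of their minimal-degree $H$-invariant liftings in $\CH$, in order to confirm this formula for $\deg(b)$.
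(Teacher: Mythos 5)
Your overall strategy matches the paper on part (i) and diverges on part (ii), and both parts are essentially sound.

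For (i), your argument is in substance the same as the paper's: you reproduce the defect identity (the paper's \Cref{eq:amenable_defects}, after relabeling $\gamma,\nu,\eta$ as $\delta_G(M),\delta_N(M),\delta_G(\UNM)$) by summing the two instances of \Cref{cor:general_numerology}, and the whole proof hinges on the same inequality $e_1^G(\bigwedge^m\UNM) + \delta_N(M) \ge e_1^G(U_{\bigwedge^m M}^N)$. Where you differ is only in how you justify this inequality: the paper observes directly that the injection of $\bigwedge^m\UNMd$ into $S(\VNd)\otimes (W_M^N)^*$ identifies $\bigwedge^m\UNMd$ with $a\otimes (W_M^N)^*$ for an $\mathbf{x}$-homogeneous $a$ of degree $\delta_N(M)$, and reads off the fake-degree inequality from this $G$-module identification; you instead take $H$-invariants of the injection and apply \Cref{thm:OS_amenable} to $H$ to land on free rank-$1$ $S(V^*)^G$-modules, extracting the inequality from $\deg(b)\ge 0$. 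This is a slightly longer path but makes the degree bookkeeping you flag as delicate fully explicit, and the degree computation you sketch does come out correctly.

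For (ii), your route is genuinely different and arguably more conceptual. The paper stays inside the defect formalism: when $\delta_N(M)=0$ the scalar $a$ above is a nonzero constant, so $\bigwedge^m\UNMd \simeq (W_M^N)^*$ as $G$-modules, forcing $e_1^G(\bigwedge^m\UNM) = e_1^G(W_M^N)$ and hence $\delta_G(M) = \delta_G(\UNM)$; amenability of $\UNM$ then kills both defects. You instead chain two applications of Solomon's theorem (\Cref{thm:OS_amenable}), first for $N$ on $V$ and then for $H$ on $\VN$, to exhibit $(S(V^*)\otimes\bigwedge M^*)^G$ directly as a free exterior $S(V^*)^G$-algebra on $m$ generators, and then invoke the equivalent characterization of amenability. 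This works, but two small caveats: first, the final identification is really $(\CG\otimes M^*)^G\simeq(\CH\otimes\UNMd)^H$ from the first line of the proof of \Cref{cor:general_numerology} (via \Cref{prop:coinvariant_decomposition}), rather than \Cref{lem:amenability_N_trivial}(i), which as stated applies to $H$-modules $M$; and second, for the composite isomorphism to certify $G$-amenability you need the generators of $(\CH\otimes\UNMd)^H$ to be chosen $\mathbf{x}$-homogeneous with the $\mathbf{x}$-degrees adding up to $e_1^G(\bigwedge^m M)$ --- this is exactly the content of the $\mathbf{x}$-homogeneous basis constructed in the proof of \Cref{cor:general_numerology}, so it is available, but it should be invoked explicitly rather than left implicit. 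With those references tightened, your (ii) is a clean alternative that trades the paper's numerology for a structural argument.
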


\begin{proof} Let $m$ denote the rank of $M$. Define the amenability defects \begin{gather*}
\gamma:=\sum_{i=1}^m e_i^G(M)-e_1^G({\textstyle\bigwedge^m} M);\qquad
\nu:=\sum_{i=1}^me_i^N(M)-e_1^N({\textstyle\bigwedge^m}M);\qquad \text{and}\\
\eta:=\sum_{i=1}^m e_i^G(U_M^N)-e_1^G({\textstyle\bigwedge^m}U_M^N),
\end{gather*}
so that $M$ is amenable as a $G$-module if and only if $\gamma=0$; $M$ is amenable as an $N$-module if and only if $\nu=0$; and $\UNM$ is amenable as a $G$-module if and only if $\eta=0$. By \Cref{lem:amenability_N_trivial}, $\UNM$ is also amenable as an $H$-module if and only if $\eta=0$. By \cite[Lemma~2.8]{orlik1980unitary} (see also \Cref{rem:amenable_equivalence}), in any case we have that $\gamma,\nu,\eta\geq 0$.

Let $W_M^N$ be the dual of $(W_M^N)^*:=(\CN\otimes{\textstyle\bigwedge^m}M^*)^N$ (instead of the more natural but cumbersome $U_{\bigwedge^m\!\! M}^N$ in lieu of $W_M^N$). From \Cref{cor:general_numerology}, we know that \[e_i^N(M)+e_i^G(U_M^N)=e_i^G(M)\qquad\text{and}\qquad e_1^N({\textstyle\bigwedge^m}M)+e_1^G(W_M^N)=e_1^G({\textstyle\bigwedge^m}M).\] Summing the first equation over $i=1,\dots,m$ and subtracting the second equation, we obtain\begin{equation}\label{eq:amenable_defects}\nu+\eta+e_1^G({\textstyle\bigwedge^m}U_M^N)=\gamma+e_1^G(W_M^N).\end{equation}

Consider now the natural inclusion of graded $G$-modules \[S(\VNd)\otimes{\textstyle\bigwedge^m}(\CN\otimes M^*)^N\hookrightarrow S(\VNd)\otimes{\textstyle}(\CN\otimes{\textstyle\bigwedge^m}M^*)^N\] in $(S(V^*)\otimes\bigwedge M^*)^N$, which identifies $\bigwedge^m(U_M^N)^*$ with $a\otimes (W_M^N)^*$ as graded $G$-modules for some $\mathbf{x}$-homogeneous $a\in S(\VNd)$ with $\mathrm{deg}_\mathbf{x}(a)=\nu$ (cf.~\Cref{rem:amenable_equivalence}). From this it follows that $e_1^G(\bigwedge^m U_M^N)+\nu\geq e_1^G(W_M^N)$, which together with \Cref{eq:amenable_defects} implies that $\eta\leq\gamma$. Therefore if $M$ is amenable as a $G$-module then $U_M^N$ is amenable. If $M$ is amenable as an $N$-module, so that $\nu=0$, then we see that $\bigwedge^m\UNMd\simeq (W_M^N)^*$ as $G$-modules. Therefore $e_1^G(\bigwedge^m\UNM)= e_1^G(W_M^N)$, and we obtain from \Cref{eq:amenable_defects} that $\eta=\gamma$ in this case. Hence if $M$ is amenable as an $N$-module and $\UNM$ is amenable, then $M$ is amenable as a $G$-module.\end{proof}

\begin{remark} It is not true in general that $M$ being amenable as a $G$-module implies that $M$ is amenable as an $N$-module. For a counterexample, let $G=C_a=\langle c\rangle$ and $N=C_d=\langle c^e\rangle$ with $a=de$, acting on $V=\mathbb{C}$ in the standard reflection representation by $c\mapsto \zeta_a$, a primitive $a$-th root of unity. Consider the $G$-module $M:=V^*\oplus (V^*)^{\otimes(d-1)}$, so that $\bigwedge^2M\simeq (V^*)^{\otimes d}$. Then $e_1^G(M)=1=e_1^N(M)$ and $e_2^G(M)=d-1=e_2^N(M)$. Since $e_1^G(\bigwedge^2 M)=d=e_1^G(M)+e_2^G(M)$, $M$ is amenable as a $G$-module. However, $\bigwedge^2M$ is trivial as an $N$-module, and therefore $e_1^N(\bigwedge^2 M)=0\neq d=e_1^N(M)+e_2^N(M)$, so $M$ is not amenable as an $N$-module.\end{remark}

\subsection{Poincar\'e Series} Suppose $M$ is an amenable $G$-module of rank $m$. Then by \Cref{prop:amenable_equivalence} and \Cref{lem:amenability_N_trivial}, the Orlik-Solomon space $\UNM$ of \Cref{def:OS_space} is amenable (considered either as a $G$-module or as an $H$-module). Let us again write $\ENM:=\{e_1^N(M),\dots,e_m(M)\}$ for the set of $M$-exponents of $N$. We have a graded $G$-module decomposition \[\UNMd=\bigoplus_{e\in\ENM}\UNMde,\] from which we obtain more generally a graded $G$-module decomposition of $\bigwedge^p\UNMd$: \[\bigl({\textstyle\bigwedge^p}\UNMd\bigr)_e:=\spn_\CC\left\{u_{i_1}\wedge\dots\wedge u_{i_p} \ \middle| \ u_{i_j}\in \bigl(U_N^M\bigr)^*_{e_j} \ \text{with} \ {\textstyle\sum_{j=1}^p} e_j=e\right\}.\] This results in an obvious bigrading of $\bigwedge\UNMd$ and a corresponding tri-grading of the associative algebra $S(\Vd)\otimes \bigwedge\UNMd$.

\begin{definition}\label{def:trigraded_poincare}The tri-graded Poincar\'e series for $(S(V^*)\otimes\bigwedge \UNMd)^G$ is \[\P^G_M(x,y,u):=\sum_{\ell,e,p\geq 0}\mathrm{dim}_\CC\left(S(V^*)_\ell\otimes\left({\textstyle\bigwedge^p}\UNMd\right)_e\right)^G  x^\ell y^e u^p.\]\end{definition}

We will follow the usual strategy of computing this Poincar\'e series in two different ways to deduce combinatorial formulas. Let us denote as before $d_1^G,\dots,d_r^G$ the degrees of the fundamental $G$-invariants generating $S(V^*)^G$ as a polynomial algebra. Let us index the $M$-exponents of $N$, $e_1^N(M),\dots,e_m^N(M)$, and the $\UNM$-exponents of $G$, $e_1^G(\UNM),\dots,e_m^G(\UNM)$, as in \Cref{cor:general_numerology}, so that $e_i^N(M)+e_i^G(\UNM)=e_i^G(M)$.

\begin{proposition}\label{prop:general_product_side}
$\P^G_M(x,y,u)=\displaystyle\frac{\prod\limits_{i=1}^m\left(1+x^{e_i^G(\UNM)}y^{e_i^N(M)}u\right)}{\prod\limits_{j=1}^r\left(1-x^{d_j^G}\right)}.$
\end{proposition}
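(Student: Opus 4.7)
The plan is to compute $\P^G_M(x,y,u)$ by factoring the tri-graded invariant ring $(S(V^*)\otimes\bigwedge \UNMd)^G$ in three stages that mirror the three factors of the claimed formula: first splitting off the $S(V^*)^G$-contribution to produce the denominator, then using the coinvariant decomposition $\CG\simeq\CH\otimes\CN$ from Proposition \ref{prop:coinvariant_decomposition} to reduce the remaining computation to one inside the quotient reflection group $H$, and finally applying amenability of $\UNM$ to identify the numerator factor-by-factor.

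First, tensoring the Chevalley-Shephard-Todd isomorphism $S(V^*)\simeq S(V^*)^G\otimes\CG$ of graded $G$-modules with $\bigwedge\UNMd$ and taking $G$-invariants yields
\[(S(V^*)\otimes{\textstyle\bigwedge}\UNMd)^G\simeq S(V^*)^G\otimes(\CG\otimes{\textstyle\bigwedge}\UNMd)^G\]
as trigraded vector spaces (the three gradings being the $\mathbf{x}$-degree on $S(V^*)$, the internal $y$-degree on $\UNMd$ inherited from $\CN\otimes M^*$, and the wedge degree). The first factor immediately contributes the denominator $\prod_{j=1}^r(1-x^{d_j^G})^{-1}$. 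Since $N$ acts trivially on both $\CH$ and $\bigwedge\UNMd$, Proposition \ref{prop:coinvariant_decomposition} further reduces the second factor via
\[(\CG\otimes{\textstyle\bigwedge}\UNMd)^G=\bigl((\CG\otimes{\textstyle\bigwedge}\UNMd)^N\bigr)^H\simeq(\CH\otimes{\textstyle\bigwedge}\UNMd)^H.\]

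Next, I would invoke amenability to evaluate this surviving piece. Proposition \ref{prop:amenable_equivalence}(i) says that $\UNM$ is amenable as a $G$-module, which by Lemma \ref{lem:amenability_N_trivial}(ii) is equivalent to amenability as an $H$-module. Applying Theorem \ref{thm:OS_amenable} to the amenable $H$-module $\UNM$ and then cancelling the common $S(\VNd)^H$-factor against the analogous decomposition $S(\VNd)\simeq S(\VNd)^H\otimes\CH$ produces
\[(\CH\otimes{\textstyle\bigwedge}\UNMd)^H\simeq{\textstyle\bigwedge}(\CH\otimes\UNMd)^H.\]
The square-diagonal basis $u_1^G,\dots,u_m^G$ of $(\CH\otimes\UNMd)^H$ built in the proof of Corollary \ref{cor:general_numerology} is simultaneously $\mathbf{x}$-homogeneous in the $\CH$-factor and $y$-homogeneous in the $\UNMd$-factor, with $u_i^G$ of bidegree $(e_i^G(\UNM),\,e_i^N(M))$ under the indexing fixed there. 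This delivers the numerator $\prod_{i=1}^m\bigl(1+x^{e_i^G(\UNM)}y^{e_i^N(M)}u\bigr)$.

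The main obstacle will be careful bookkeeping of all three gradings throughout the chain of isomorphisms: in particular, verifying that Theorem \ref{thm:OS_amenable} applied to $\UNM$ as an $H$-module is compatible with the additional internal $y$-grading that $\UNMd$ carries beyond its abstract $H$-module structure. This should follow because the isomorphism of Theorem \ref{thm:OS_amenable} comes from a natural injection $\bigwedge(\CH\otimes\UNMd)^H\hookrightarrow(\CH\otimes\bigwedge\UNMd)^H$ that manifestly respects the $y$-grading, and amenability (via Lemma \ref{lem:amenable_top_wedge} applied in each wedge-degree) upgrades this natural injection to an isomorphism.
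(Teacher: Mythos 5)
Your proposal is correct and is built from the same ingredients as the paper's proof---amenability of $\UNM$ via \Cref{prop:amenable_equivalence} and \Cref{lem:amenability_N_trivial}, \Cref{thm:OS_amenable}, and the square-diagonal bigraded basis from \Cref{cor:general_numerology}---but you assemble them in a different order. The paper applies \Cref{thm:OS_amenable} once, directly to the amenable \emph{$G$-module} $\UNM$ over $S(V^*)$, getting $(S(V^*)\otimes\bigwedge\UNMd)^G\simeq S(V^*)^G\otimes\bigwedge(\CG\otimes\UNMd)^G$ in one stroke; since the isomorphism is realized by multiplication in $S(V^*)\otimes\bigwedge\UNMd$, it visibly preserves all three gradings and no further compatibility check is needed. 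You instead peel off $S(V^*)^G$ first, pass to $(\CH\otimes\bigwedge\UNMd)^H$ via \Cref{prop:coinvariant_decomposition}, and only then invoke amenability of $\UNM$ as an \emph{$H$-module} over $S(\VNd)$. This works and makes the role of $H$ more explicit, but it costs you the extra cancellation step and the grading bookkeeping you flag at the end. Two small cautions there: the map $\bigwedge(\CH\otimes\UNMd)^H\to(\CH\otimes\bigwedge\UNMd)^H$ is not literally the wedge-product inclusion (products of $H$-harmonic elements need not be harmonic, so one must either compose with the projection onto $\CH$ or, as your ``cancelling the common $S(\VNd)^H$-factor'' implicitly does, compare Hilbert series of the two free $S(\VNd)^H$-module decompositions); and \Cref{thm:OS_amenable} applied to $H$ acting on $\VN$ natively tracks $\mathbf{N}$-degrees, so the $\mathbf{x}$-homogeneity of the generators---which is what the variable $x$ in $\P^G_M$ actually records---must be imported from the explicit square-diagonal basis, exactly as you do in your penultimate paragraph.
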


\begin{proof}We proceed as in the proof of \Cref{cor:general_numerology}: let $y_1,\dots,y_m$ be a basis of $M^*$. Let $a_{ij}^N\in \CN$ be $\mathbf{x}$-homogeneous such that $u_i^N:=\sum_{j=1}^ma_{ij}^N\otimes y_j$ form a basis for the $e_i^N(M)$-homogeneous component of $\UNMd$, and choose $\mathbf{x}$-homogeneous $a_{ij}^H\in\CH$ such that $u_i^G:=\sum_{j=1}^ma_{ij}^H\otimes u_j^N$ form a basis for $(\CH\otimes (U_M^N)^*_{e_i^N(M)})^H$ with $\mathrm{deg}_\mathbf{x}(u_i^G)=e_i^G(U_M^N)$ (where again $a_{ij}^H=0$ whenever $\mathrm{deg}_\mathbf{x}(u_j^N)\neq e_i^N(M)$). But then the $u_i^G$ form an $\mathbf{x}$-homogeneous basis for $(\CH\otimes(U_M^N)^*)^H$. Since $\UNM$ consists of $N$-invariants, by \Cref{lem:amenability_N_trivial} we have that $(\CG\otimes\UNMd)^G\simeq (\CH\otimes \UNMd)^H$.
By \Cref{prop:amenable_equivalence}, since $M$ is amenable as a $G$-module, $\UNM$ is amenable. Hence, by \Cref{thm:OS_amenable} we have that \[(S(V^*)\otimes{\textstyle\bigwedge}\UNMd)^G\simeq S(V^*)^G\otimes{\textstyle \bigwedge}(\CG\otimes\UNMd)^G.\] Since $(\CG\otimes\UNMd)^G\simeq\spn_\CC\{u_1^G,\dots,u_m^G\}$ and $u_i^G\in S(V^*)_{e_i^G(\UNM)}\otimes (U_M^N)^*_{e_i^N(M)}$, our result follows.\end{proof}

To simplify notation, for $e\in\ENM$ we denote by $\UNMe$ the homogeneous component of $\UNM$ corresponding to the dual of $\UNMde$, rather than the more natural but cumbersome graded dual $(\UNM)_{-e}$ instead of our $\UNMe$.

\begin{proposition}\label{prop:general_sum_side}
$\P^G_M(x,y,u)=\displaystyle\frac{1}{|G|}\sum\limits_{g\in G}\frac{{\textstyle\prod_{e\in\ENM}}\mathrm{det}\left(1+y^eug|(\UNM)_e\right)}{\mathrm{det}\left(1-xg|V\right)}.$
\end{proposition}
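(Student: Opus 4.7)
The plan is to execute a standard Molien-style trace computation, exploiting the fact that $\UNMd$ comes with an internal grading by $\ENM$ that is respected by the $G$-action.

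First I would rewrite the Poincar\'e series using the projection $\frac{1}{|G|}\sum_{g\in G}g$ onto $G$-invariants. Since this projection is idempotent, for any finite-dimensional $G$-module $A$ we have $\dim_\CC(A^G)=\frac{1}{|G|}\sum_{g\in G}\mathrm{tr}(g|A)$, so that
\[
\P^G_M(x,y,u)=\frac{1}{|G|}\sum_{g\in G}\sum_{\ell,e,p\geq 0}\mathrm{tr}\Bigl(g\,\Big|\,S(V^*)_\ell\otimes({\textstyle\bigwedge^p}\UNMd)_e\Bigr)x^\ell y^e u^p,
\]
and the trace factors as a product of the trace on $S(V^*)_\ell$ and the trace on $(\bigwedge^p\UNMd)_e$.

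Next I would handle the symmetric algebra factor by the usual Molien identity: since $g\in G\subset\mathrm{GL}(V)$ acts on $S(V^*)$ by graded algebra automorphisms,
\[
\sum_{\ell\geq 0}\mathrm{tr}\bigl(g|S(V^*)_\ell\bigr)x^\ell=\frac{1}{\mathrm{det}(1-xg|V)}.
\]
For the exterior algebra factor, the key observation is that the decomposition $\UNMd=\bigoplus_{e\in\ENM}\UNMde$ is a decomposition of $G$-modules, because the $\mathbf{x}$-grading on $\CN$ is preserved by the $G$-action (recall from \Cref{rem:harmonic_stable} that $\CN$ is $G$-stable as a graded subspace of $S(V^*)$). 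Consequently, the exterior algebra decomposes as a tensor product of graded $G$-modules $\bigwedge\UNMd\simeq\bigotimes_{e\in\ENM}\bigwedge\UNMde$, which respects both the wedge grading (parameter $u$) and the internal grading (parameter $y$).

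Applying the standard identity $\sum_p\mathrm{tr}(T|\bigwedge^p W)u^p=\mathrm{det}(1+uT)$ to each $\UNMde$ with $T=g$ and with the variable $u$ replaced by $y^eu$ (tracking both the wedge degree and the internal $y$-weight $e$ per basis vector), and multiplying over $e\in\ENM$, I would obtain
\[
\sum_{e,p\geq 0}\mathrm{tr}\bigl(g|({\textstyle\bigwedge^p}\UNMd)_e\bigr)y^eu^p=\prod_{e\in\ENM}\mathrm{det}\bigl(1+y^eu\cdot g|\UNMde\bigr).
\]
Substituting this and the Molien identity into the expression for $\P^G_M(x,y,u)$ yields the claimed formula. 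The only step requiring real care is verifying that $\UNMde$ is indeed $G$-stable so that the factorization over $e\in\ENM$ makes sense, but this is immediate from the $G$-stability of the grading on $\CN$.
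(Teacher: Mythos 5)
Your proposal is correct and follows essentially the same route as the paper: expand $\P^G_M$ as the average of per-element traces, factor the trace across the tensor product, apply the Molien identity to the symmetric algebra factor, and use the $G$-stable decomposition $\bigwedge\UNMd\simeq\bigotimes_{e\in\ENM}\bigwedge\UNMde$ together with the elementary identity $\sum_p\mathrm{tr}(T|\bigwedge^p W)u^p=\mathrm{det}(1+uT)$ for each graded piece. The only difference is cosmetic: the paper computes per-element traces as determinants of $g^{-1}$ acting on $V$ and $\UNMe=(\UNM)_e$ (since taking duals inverts the action) and then invokes the bijection $g\mapsto g^{-1}$ after averaging, whereas you write the determinants directly in terms of $g$ on $V$ and $\UNMde$; these are reconciled by the same $g\leftrightarrow g^{-1}$ substitution under the sum, a triviality worth making explicit if you want the per-$g$ summand to match the displayed statement literally.
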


\begin{proof} For $g\in G$, let us write \[\P^g_M(x,y,u)=\sum_{\ell,e,p\geq 0}\mathrm{tr}\bigl(g|S(V^*)_\ell\otimes({\textstyle\bigwedge^p}\UNMd)_e\bigr)x^\ell y^e u^p,\] so that $\P^G_M(x,y,u)=\frac{1}{|G|}\sum_{g\in G}\P^g_M(x,y,u)$ and \[\P^g_M(x,y,u)=\left(\sum_{\ell\geq 0}\mathrm{tr}\bigl(g|S(V^*)_\ell\bigr)x^\ell\right)\left(\sum_{e,p\geq 0}\mathrm{tr}\bigl(g|({\textstyle\bigwedge^p}\UNMd)_e\bigr)y^eu^p\right).\]We know that $ \sum_{\ell\geq 0}\mathrm{tr}\bigl(g|S(V^*)_\ell\bigr)x^\ell=\mathrm{det}(1-xg^{-1}|V)^{-1}$. On the other hand, since $\UNMd\simeq\bigoplus_{e\in\ENM}\UNMde$, we have that $\bigwedge \UNMd\simeq\bigotimes_{e\in\ENM}\bigwedge\UNMde$ as bigraded $G$-modules. Hence, for each $g\in G$, \[\sum_{e,p\geq 0}\mathrm{tr}\bigl(g|({\textstyle\bigwedge^p}\UNMd)_e\bigr)y^eu^p=\prod_{e\in\ENM}\left(\sum_{p\geq 0}\mathrm{tr}\bigl(g|{\textstyle\bigwedge^p}\UNMde\bigr)y^{ep}u^p\right).\] For each $e\in \ENM$ we have that $\sum_{p\geq 0}\mathrm{tr}\bigl(g|{\textstyle\bigwedge^p}\UNMde\bigr)y^{ep}u^p=\mathrm{det}(1+y^eug^{-1}|\UNMe)$. Hence, for each $g\in G$, \[ \P^g_M(x,y,u)=\frac{\prod_{e\in\ENM}\mathrm{det}(1+y^eug^{-1}|\UNMe)}{\mathrm{det}(1-xg^{-1}|V)}.\] Our result follows after taking the average over $g\in G$ on each side.\end{proof}


\section{Proofs of the Main Theorems}
\label{sec:proofs}
We are now in a position to apply the results of \Cref{sec:quotients} to prove the main results announced in the introduction. Fix $G\subset\mathrm{GL}(V)$ a complex reflection group acting by reflections on the vector space $V$ of dimension $r$. Let $N\trianglelefteq G$ be a normal reflection subgroup with quotient $H=G/N$, which acts by reflections on $\VN=V/N$. For $\sigma\in\mathrm{Gal}(\mathbb{Q}(\zeta_G)/\mathbb{Q})$, where $\zeta_G$ denotes a primitive $|G|$-th root of unity, write $\Vs$ for the \defn{Galois twist} of $V$ (as defined in \Cref{sec:o-s_thm}). As in \Cref{def:OS_space}, we write $\UN$ for the dual of $(\CN\otimes V^*)^N$, and more generally $\UNs$ for the dual of $(\CN\otimes\Vsd)^N$.

\subsection{Proof of~\texorpdfstring{\Cref{thm:numbers}}{Theorem \ref{thm:numbers}}}

{\renewcommand{\thetheorem}{\ref{thm:numbers}}
\begin{theorem}
Let $G\subset \mathrm{GL}(V)$ be a complex reflection group and let $N\trianglelefteq G$ be a normal reflection subgroup. Let $H=G/N$ and $\VN=V/N$. Then for a suitable choice of indexing we have
\begin{align*}
e_i^N(\Vs) {+} e_i^G(\UNs) & = e_i^G(\Vs)\\
d_i^N \cdot e_i^H(\VNs) & = e_i^G(\VNs)\\
d_i^N\cdot d_i^H & =d_i^G.
\end{align*}
\end{theorem}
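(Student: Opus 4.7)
The plan is to handle the three equations separately, since they involve essentially disjoint collections of exponents linked only through the indexing $i$. The first equation follows immediately from applying \Cref{cor:general_numerology} to the $G$-module $M=\Vs$, yielding $e_i^N(\Vs)+e_i^G(\UNs)=e_i^G(\Vs)$ for a suitable indexing of these three sequences. The third equation is essentially encoded in \Cref{rem:indexing}: from equation~(\ref{rearrangement-eq}) we have $\g_i(\mathbf{x})=\h_i(\mathbf{N}_{d_i^N})$, where each $\n_j\in\mathbf{N}_{d_i^N}$ has $\mathbf{x}$-degree $d_i^N$ and $\h_i$ is $\mathbf{N}$-homogeneous of $\mathbf{N}$-degree $d_i^H$, so comparing $\mathbf{x}$-degrees on both sides yields $d_i^G=d_i^N\cdot d_i^H$.

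For the substantive second equation, I would first apply \Cref{lem:amenability_N_trivial}(i) to $\VNs$ (viewed as a $G$-module via $G\twoheadrightarrow H$) to obtain an isomorphism $(\CG\otimes\VNsd)^G\simeq(\CH\otimes\VNsd)^H$ preserving the $\mathbf{x}$-grading. Next, I would exploit the block decomposition from \Cref{rem:indexing}: $H=\prod_{d\in\DN}H_{(d)}$ acts on $\VNd=\bigoplus_{d\in\DN}\VNdd$ blockwise, and since this decomposition is defined by $\mathbf{x}$-degree---which is preserved by Galois twisting---we also have $\VNsd=\bigoplus_{d\in\DN}(E^\sigma_d)^*$ as $H$-modules. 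Combined with the standard identification $\CH\simeq\bigotimes_{d\in\DN}\mathcal{C}_{H_{(d)}}$ of the coinvariant algebra of a direct product of reflection groups, taking $H$-invariants blockwise yields
\[(\CH\otimes\VNsd)^H\simeq\bigoplus_{d\in\DN}\bigl(\mathcal{C}_{H_{(d)}}\otimes(E^\sigma_d)^*\bigr)^{H_{(d)}}.\]

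The key observation in each block is that $\mathcal{C}_{H_{(d)}}\subset S(\VNdd)=\mathbb{C}[\mathbf{N}_d]$ is a polynomial ring in variables all of $\mathbf{x}$-degree $d$, so any $\mathbf{N}$-homogeneous element of $\mathbf{N}$-degree $k$ is automatically $\mathbf{x}$-homogeneous of $\mathbf{x}$-degree $dk$. Hence a $\mathbf{N}$-homogeneous basis for $\bigl(\mathcal{C}_{H_{(d)}}\otimes(E^\sigma_d)^*\bigr)^{H_{(d)}}$ realizing the $H_{(d)}$-exponents of $E^\sigma_d$ is simultaneously $\mathbf{x}$-homogeneous with $\mathbf{x}$-degrees exactly $d$ times those exponents. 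The final indexing step is routine: the multiset $\{d_i^N\mid 1\leq i\leq r\}$ contains exactly $|\mathbf{N}_d|=\dim\VNdd$ copies of each $d\in\DN$, matching the number of $H_{(d)}$-exponents of $E^\sigma_d$, so any bijection inside each block yields an indexing satisfying $d_i^N\cdot e_i^H(\VNs)=e_i^G(\VNs)$. I do not foresee any serious obstacle---all structural ingredients are already in place---and the main care required is tracking the interplay between the $\mathbf{x}$- and $\mathbf{N}$-gradings on $\CH$.
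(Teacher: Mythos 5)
Your proposal is correct and follows essentially the same route as the paper: the first and third equalities are dispatched exactly as in the paper (via \Cref{cor:general_numerology} and \Cref{rem:indexing}), and for the second you use the same key ingredients---\Cref{lem:amenability_N_trivial}(i) together with the block decomposition $(\CH\otimes\VNsd)^H\simeq\bigoplus_{d\in\DN}(\mathcal{C}_{H_{(d)}}\otimes(\VN^\sigma_d)^*)^{H_{(d)}}$ and the observation that $\mathbf{N}$-degree $k$ in $S(\VN^*_d)$ forces $\mathbf{x}$-degree $dk$. No substantive differences.
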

\addtocounter{theorem}{-1}}

\begin{proof} The first equality is \Cref{cor:general_numerology} applied to the $G$-module $M=\Vs$, and the last equality follows from the observations in~\Cref{rem:indexing}.

Let us establish the second equality. Let $\n^\sigma_1,\dots,\n^\sigma_r$ denote a basis for $\VNsd$ as a $G$-module. We will show that there exist $a_{ij}\in S(\VNd)$ such that $u_i:=\sum_{j=1}^r a_{ij}\otimes \n_j^\sigma$ form an $\mathbf{N}$-homogeneous basis for $(\CH\otimes \VNsd)^H$ (so that each non-zero $a_{ij}$ is $\mathbf{N}$-homogeneous of $\mathbf{N}$-degree $e_i^H(\VNs)$), and moreover $a_{ij}=0$ whenever $d_j^N\neq d_i^N$ and each $a_{ij}\in S(\mathbf{N}_{d_i^N})$, where as before $\mathbf{N}_d$ denotes the set of fundamental $N$-invariants of degree $d$. Since $(\CG\otimes\VNsd)^G\simeq(\CH\otimes\VNsd)^H$ by \Cref{lem:amenability_N_trivial}, the existence of such $a_{ij}$ will establish our claim, since each non-zero $a_{ij}$ as above will then be $\mathbf{x}$-homogeneous of $\mathbf{x}$-degree $e_i^G(\VNs)= d_i^N\cdot e_i^H(\VNs)$.

As in \Cref{rem:indexing}, let us write $\DN=\{d_1^N,\dots,d_r^N\}$ for the set of degrees of $N$ and $\VNdd$ for the graded component of $\VNd$ spanned by the fundamental $N$-invariants of degree $d\in\DN$. To simplify notation, let us write $\VN_d$ for the graded dual of $\VNdd$, instead of $\VN_{-d}$, so that $\VN\simeq\bigoplus_{d\in\DN}\VN_d$, and similarly the Galois twist $\VNs\simeq\bigoplus_{d\in\DN}\VN^\sigma_d$. We saw in \Cref{rem:indexing} that $H$ decomposes as a direct product $\bigtimes_{d\in\DN}H_{(d)}$, where each $H_{(d)}$ is a reflection group acting on $\VN_d$ and $H_{(d)}$ acts trivially on $\VN^\sigma_{d'}$ whenever $d\neq d'$. We then see that $S(\VNd)\simeq\bigotimes_{d\in\DN} S(\VNdd)$ and each $S(\VNdd)\simeq S(\VNdd)^{H_{(d)}}\otimes \mathcal{C}_{H_{(d)}}$ as $H$-modules, so that in particular $\CH\simeq\bigotimes_{d\in\DN}\mathcal{C}_{H_{(d)}}$ as $H$-modules, where again $H_{(d)}$ acts trivially on $\mathcal{C}_{H_{(d')}}$ for $d\neq d'$. It follows from the above observations that\[(\CH\otimes\VNsd)^H\simeq\left(\bigotimes_{d\in\DN}\mathcal{C}_{H_{(d)}}\otimes\bigoplus_{d\in\DN}(\VN_d^\sigma)^*\right)^{\bigtimes_{d\in\DN}H_{(d)}}\simeq\bigoplus_{d\in\DN}(\mathcal{C}_{H_{(d)}}\otimes(\VN^\sigma_d)^*)^{H_{(d)}},\] so that we may indeed choose $a_{ij}\in\mathcal{C}_{H_{(d_i^N)}}\subset S(\VN_{d_i^N}^*)$ such that the $\mathbf{N}$-homogeneous $u_i=\sum_{j=1}^ra_{ij}\otimes\n_j^\sigma\in(\mathcal{C}_{H_{(d_i^N)}}\otimes (\VN^\sigma_{d_i^N})^*)^{H_{(d_i^N)}}$ (i.e., with $a_{ij}=0$ whenever $d_i^N\neq d_j^N$) form a basis for $(\CH\otimes\VNsd)^H$ that is simultaneously $\mathbf{N}$-homogeneous of $\mathbf{N}$-degree $e_i^H(\VNs)$ and $\mathbf{x}$-homogeneous of $\mathbf{x}$-degree $e_i^G(\VNs)=d_i^N\cdot e_i^H(\VNs)$.
\end{proof}

When $\sigma=1$, the $G$-module $\UNs$ in \Cref{def:OS_space} admits a more concrete description.

\begin{lemma}[{\cite[Example 2.4]{bonnafe2006twisted}}] \label{EU} Let $\eta:S(V^*)\rightarrow\CN$ denote the projection onto the space of $N$-harmonic polynomials, and let \begin{align*} d:\VN^*=\spn_\CC\{\n_1,\dots,\n_r\}&\to \spn_\CC\{d\n_1,\dots,d\n_r\}\\ \n_i&\mapsto d\n_i=\sum_{j=1}^r\frac{\partial \n_i}{\partial x_j}\otimes x_j.\end{align*} Then $(\eta\otimes 1)\circ d:\VNd\rightarrow \UNd$ is a graded (degree $-1$) isomorphism of $G$-modules.
\label{lem:isomorphism}
\end{lemma}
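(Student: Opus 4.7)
The plan is to reduce to \Cref{lem:fake_OS_space} applied to $M = V$. By Solomon's theorem \Cref{thm:solomon} applied to the reflection group $N$, the module $(S(V^*)\otimes V^*)^N$ is free over $S(V^*)^N$ with basis $\{d\n_1,\dots,d\n_r\}$, so setting $\tilde{u}_i := d\n_i$ and $(\tilde{U}^N)^* := \spn_\CC\{d\n_1,\dots,d\n_r\}$, the hypotheses of \Cref{lem:fake_OS_space} are within reach, provided that $(\tilde{U}^N)^*$ is $G$-stable.

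The key observation is that $(\tilde{U}^N)^*$ is indeed $G$-stable. I would argue this from two facts: (i) the basis $\{\n_1,\dots,\n_r\}$ is chosen (as in the proof of \Cref{thm:reflection_action}) so that $\VN^* = \spn_\CC\{\n_1,\dots,\n_r\}$ is $H$-stable, hence $G$-stable with $N$ acting trivially; and (ii) the map $d:S(V^*)\to S(V^*)\otimes V^*$ given by $f\mapsto \sum_j \frac{\partial f}{\partial x_j}\otimes x_j$ is $\mathrm{GL}(V)$-equivariant. Combining (i) and (ii), for every $g\in G$ we have $d(g\cdot \n_i) = g\cdot d\n_i \in (\tilde{U}^N)^*$, establishing $G$-stability.

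With this in place, \Cref{lem:fake_OS_space} immediately gives that $\eta\otimes 1$ restricts to a graded $G$-module isomorphism $(\tilde{U}^N)^*\to \UNd$. It remains to note that $d:\VN^*\to (\tilde{U}^N)^*$ sending $\n_i\mapsto d\n_i$ is itself a graded degree-$(-1)$ $G$-module isomorphism: it is $G$-equivariant by (ii), it lowers $\mathbf{x}$-degree by $1$ (since $\n_i$ has $\mathbf{x}$-degree $d_i^N$ while $d\n_i$ has $\mathbf{x}$-degree $d_i^N-1$), and it is a $\CC$-linear bijection (the $d\n_i$ being linearly independent over $\CC$ since they are an $S(V^*)^N$-free basis). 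Composing yields the desired graded degree-$(-1)$ $G$-module isomorphism $(\eta\otimes 1)\circ d : \VN^* \to \UNd$.

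The only genuinely nontrivial step I anticipate is the $\mathrm{GL}(V)$-equivariance of $d$, which is a direct chain-rule computation; once that is recorded, the rest of the argument is essentially formal, assembled out of tools already established in the paper. I would also take care throughout to track gradings, observing that the degree-$(-1)$ shift is consistent with the identities $e_i^N(V) = d_i^N - 1$ relating the $V$-exponents and degrees of the reflection group $N$.
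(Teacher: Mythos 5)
Your proof is correct, and it is genuinely different in character from the paper's treatment of this lemma. The paper does not supply an internal proof at all: it attributes the statement to Bonnaf\'e--Lehrer--Michel \cite[Example 2.4]{bonnafe2006twisted} and relies on that external reference. You instead give a self-contained argument assembled from the paper's own toolkit. Your reduction to \Cref{lem:fake_OS_space} with $M=V$ is valid: Solomon's theorem applied to the reflection group $N$ shows that $\{d\n_1,\dots,d\n_r\}$ is a homogeneous free $S(V^*)^N$-module basis for $(S(V^*)\otimes V^*)^N$; the $\mathrm{GL}(V)$-equivariance of the universal derivation $f\mapsto \sum_j \frac{\partial f}{\partial x_j}\otimes x_j$ together with the $G$-stability of $\spn_\CC\{\n_1,\dots,\n_r\}$ (established in the proof of \Cref{thm:reflection_action}) gives the required $G$-stability of $\spn_\CC\{d\n_1,\dots,d\n_r\}$; and \Cref{lem:fake_OS_space} then produces the graded $G$-module isomorphism $\eta\otimes 1$, which you compose with the degree-$(-1)$ $G$-equivariant $\CC$-linear bijection $d$. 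There is no circularity: \Cref{lem:fake_OS_space} and \Cref{thm:solomon} precede the present lemma and do not depend on it. What your approach buys is a proof entirely internal to the paper's framework, at the cost of having to record the (standard but nontrivial) equivariance of $d$, which you flag as the one step deserving an explicit chain-rule or intrinsic argument; the paper's choice to cite externally avoids this overhead but leaves the reader to chase the reference. Either route is sound.
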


\begin{remark}\label{rem:untwisted_numbers}
As mentioned in the introduction, in the case where $\sigma=1$, once we replace $e_i^G(\UN)=e_i^G(\VN)$ by \Cref{EU}, the equalities in \Cref{thm:numbers} are compatible with the classical relations $d_i^G=e_i^G(V)+1$; $d_i^N=e_i^N(V)+1$; and $d_i^H=e_i^H(\VN)+1$. To see this, we proceed as in \cite[Theorem~1.3]{arreche2020normal}. We found in \Cref{rem:indexing} a choice of $\mathbf{N}$-homogeneous $H$-invariants $\h_i(\mathbf{N}_{d_i^N})=\g_i(\mathbf{x})$, a set of fundamental $G$-invariants as in \Cref{rearrangement-eq}, immediately resulting in the equality $d_i^N\cdot d_i^H=d_i^G$ of \Cref{thm:numbers}. Let us show that this same choice of indexing results in the other two equalities of \Cref{thm:numbers}. We begin by comparing $\mathbf{x}$-degrees in \[d\g_i=\sum_{j=1}^r\frac{\partial\g_i}{\partial x_j}\otimes x_j=\sum_{k=1}^r\frac{\partial \h_i}{\partial \n_k}\cdot d\n_k=\sum_{k=1}^r\sum_{j=1}^r\frac{\partial \h_i}{\partial \n_k}\cdot\frac{\partial\n_k}{\partial x_j}\otimes x_j.\] Recall that $e_i^G(V)=d_i^G-1=\mathrm{deg}_\mathbf{x}(d\g_i)$ and $e_i^N(V)=d_i^N-1=\mathrm{deg}_\mathbf{x}(d\n_i)$. Similarly, $e_i^H(E)=d_i^H-1=\mathrm{deg}_\mathbf{N}(d\h_i)$, where now $d\h_i=\sum_{k=1}^r\frac{\partial \h_i}{\partial \n_k}\otimes\n_k\in  (S(\VNd)\otimes \VNd)^H$. Since $\frac{\partial \h_i}{\partial \n_k}=0$ whenever $\mathrm{deg}_\mathbf{x}(\n_k)\neq d_i^N$, it follows that \[e_i^G(V)=e_i^N(V)+d_i^N\cdot (d_i^H-1)=e_i^N(V)+d_i^N\cdot e_i^H(E).\] It remains to show that $d_i^N\cdot e_i^H(E)=e_i^G(\VN)$ under this same choice of indexing.

The $e_i^G(\VN)$ are the $\mathbf{x}$-degrees of a homogeneous basis for $(\CG\otimes\VNd)^G$. By \Cref{lem:amenability_N_trivial}, $(\CG\otimes\VNd)^G\simeq(\CH\otimes\VNd)^H$, and therefore the $(\eta_H\otimes 1)(d\h_i)$ serve as a homogeneous basis for $(\CG\otimes\VNd)^G$, where $\eta_H:S(\VNd)\rightarrow\CH$ denotes the projection onto the space of $H$-harmonic polynomials (cf.~\Cref{EU}). Hence for any $k$ such that $\frac{\partial \h_i}{\partial \n_k}\neq 0$, the $e_i^G(\VN)$ are given by the $\mathrm{deg}_\mathbf{x}(\frac{\partial \h_i}{\partial \n_k})=(d_i^H-1)d_i^N$.

The proof of the second equality of \Cref{thm:numbers} generalizes what is essential in the case $\sigma=1$---where we have explicit bases for the relevant Orlik-Solomon spaces of \Cref{def:OS_space} in terms of fundamental invariants (up to a harmless isomorphism as in \Cref{EU}): $\{d\g_i\}$ for $\UGd$; $\{d\n_i\}$ for $\UNd$; and $\{d\h_i\}$ for $(U^H)^*$---to the more general situation where $\sigma\in\mathrm{Gal}(\mathbb{Q}(\zeta_G)/\mathbb{Q})$ is arbitrary.
\end{remark}

\begin{example}
Take $G=W(F_4)=G_{28}$, $N$ to be the normal subgroup generated by the reflections corresponding to short roots (see the proof of \Cref{thm:repeated_normal} for more details), and $\sigma=1$. Then $N\simeq W(D_4)$ and $G/N\simeq W(A_2)=\mathfrak{S}_3$ acts by reflections on $\CC \oplus \CC \oplus \CC^2$ (trivially on $\CC\oplus\CC$).  \Cref{thm:numbers} corresponds to the identities
\begin{align*}
(1,5,3,3) {+} (0,0,4,8) &= (1,5,7,11) \\
(2,6,4,4) \cdot (0,0,1,2) & = (0,0,4,8)\\
(2,6,4,4) \cdot (1,1,2,3) & = (2,6,8,12).
\end{align*}
Note that the exponents and degrees of $N \simeq W(D_4)$ must be reordered for the identities to hold (see~\Cref{rem:indexing}).
\end{example}

\begin{remark}\label{rem:E_aint_UNs}

When $\sigma=1$, $\UNs \simeq \VNs$ as $G$-representations by~\Cref{lem:isomorphism}---but it is not always the case that $\UNs \simeq \VNs$ as $G$-representations for more general $\sigma$. For example, take the cyclic groups $G=C_a \triangleright C_d=N$ for $d|a$, with $\sigma$ being complex conjugation. Then $\UNsd = \spn_\mathbb{C}\{ x \otimes x^\sigma\}$, on which $G$ acts trivially. We discuss this in more detail in \Cref{sec:cyclic_examples}. See~\Cref{sec:examples} for more examples of explicit identifications of the spaces $\UNs$.
\end{remark}

\subsection{Proof of~\texorpdfstring{\Cref{thm:main_theorem}}{Theorem \ref{thm:main_theorem}}}

{\renewcommand{\thetheorem}{\ref{thm:main_theorem}}
\begin{theorem}
Let $G\subset \mathrm{GL}(V)$ be a complex reflection group of rank $r$ and let $N\trianglelefteq G$ be a normal reflection subgroup. Let $\VN=V/N$ and $\sigma\in\mathrm{Gal}(\mathbb{Q}(\zeta_G)/\mathbb{Q})$. Then for a suitable choice of indexing we have
\[\sum_{g \in G}\left( \prod_{\lambda_i(g) \neq 1} \frac{1-\lambda_i(g)^\sigma}{1-\lambda_i(g)}\right) q^{\fix_V (g)} t^{\fix_{E} (g)} = \prod_{i=1}^r \left(qt+e_i^N(V^\sigma) t + e_i^G(\UNs)\right),\] %
where the $\lambda_i(g)$ are the eigenvalues of $g\in G$ acting on $V$.
\end{theorem}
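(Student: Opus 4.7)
The plan is to follow the general strategy used to derive \Cref{cor:sol2} from \Cref{thm:os2} and then \Cref{thm:orlik_solomon} from \Cref{cor:sol2}, suitably adapted to the trigraded Poincar\'e series developed in \Cref{sec:quotients}. I will first invoke \Cref{prop:general_product_side} and \Cref{prop:general_sum_side} for the amenable $G$-module $V^\sigma$ to equate
\begin{equation*}
\frac{\prod_{i=1}^r\bigl(1 + x^{e_i^G(\UNs)}y^{e_i^N(V^\sigma)}u\bigr)}{\prod_{j=1}^r(1 - x^{d_j^G})} = \frac{1}{|G|}\sum_{g \in G}\frac{\prod_{e \in \ENs}\det(1 + y^eug|\UNse)}{\det(1-xg|V)},
\end{equation*}
and then apply the formal substitution $y \mapsto x^t$, $u \mapsto qt(1-x) - 1$, followed by the limit $x \to 1$.

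On the product side, each numerator factor expands as $(qt + e_i^N(V^\sigma)t + e_i^G(\UNs))(1-x) + O((1-x)^2)$ near $x = 1$. Since $\prod_j(1 - x^{d_j^G}) = |G|(1-x)^r + O((1-x)^{r+1})$, the specialized product side will limit to $\prod_{i=1}^r(qt + e_i^N(V^\sigma)t + e_i^G(\UNs))/|G|$, which is exactly $1/|G|$ times the right-hand side of the theorem.

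The delicate issue is on the sum side: the specialization does not in general give the correct $g$-summand term-by-term, since the resulting expression involves the eigenvalues $\nu_j(g)$ of $g$ on the $H$-module $\UNs$ (together with the $N$-exponents $e_i^N(V^\sigma)$) rather than only the $V$-eigenvalues $\lambda_i(g)$ demanded by the theorem. To circumvent this, I will partition the sum over $g \in G$ into cosets $Ng$ of $N$. Since $g$ acts on $\UNs$ via its image in $H = G/N$, the numerator factor $\prod_e \det(1 + y^eug|\UNse)$ is constant along each coset and factors out. The remaining factor $\sum_{n \in N}\det(1-x(ng)|V)^{-1}$ can then be evaluated via $S(V^*)^N \simeq S(E^*)$ together with the graded trace formula to be $|N|/\prod_k(1 - x^{d_k^N}\mu_k(g))$, where the $\mu_k(g)$ are the eigenvalues of $gN \in H$ acting on $E^* \simeq \spn_\CC\{\n_1, \dots, \n_r\}$ (with $\n_k$ of $\mathbf{x}$-degree $d_k^N$).

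The main obstacle will be to show, for each coset $Ng$, that the specialized limit of the resulting closed-form coset expression equals the corresponding coset contribution of the theorem's right-hand side, namely $t^{\fix_E(g)}\sum_{n \in N}\bigl(\prod_{\lambda_i(ng) \neq 1}(1-\lambda_i(ng)^\sigma)/(1-\lambda_i(ng))\bigr)q^{\fix_V(ng)}$. Since $N \trianglelefteq G$, every $g \in G$ normalizes $N$, so the twisted invariant theory of \cite{bonnafe2006twisted} applies to each coset $Ng$. The strategy is to use their coset-level twisted Orlik--Solomon product formula for $\sum_{n \in N}(\text{prefactor})(ng)q^{\fix_V(ng)}$ and verify that the factors $\det(1 + y^eug|\UNse)$ and $(1 - x^{d_k^N}\mu_k(g))$ of our coset expression match, under the substitutions $y = x^t$, $u = qt(1-x) - 1$ and the limit $x \to 1$, the factors appearing in the BLM formula. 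Summing these coset identities over $Ng$ then yields the theorem.
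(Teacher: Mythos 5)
Your proposal follows essentially the same route as the paper: compute the trigraded Poincar\'e series $\P^G_\sigma(x,y,u)$ in two ways via \Cref{prop:general_product_side,prop:general_sum_side}, specialize $y\mapsto x^t$, $u\mapsto qt(1-x)-1$, take $x\to 1$, and handle the sum side coset-by-coset using the closed product formula $\P^{Ng}_\sigma(x,y,u)=\prod_i\frac{1+\epsilon_i^g(\UNs)uy^{e_i^N(\Vs)}}{1-\epsilon_i^g(\VN)x^{d_i^N}}$ together with the twisted invariant theory of \cite{bonnafe2006twisted}. The only step you leave implicit is the one the paper's \Cref{cor:twisted_graded} carries out: the factor-matching is not literal but proceeds by showing the specialized coset limit has the form $t^{\fix_E(g)}P(q)$ (with the case $\fix_{\UNs}(g)>\fix_{\VN}(g)$ forcing both sides to vanish) and then pinning down $P(q)$ by evaluating at $t=1$ via \cite[Theorem~3.1]{bonnafe2006twisted}.
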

\addtocounter{theorem}{-1}
}

We refer to the left-hand side of~\Cref{thm:main_theorem} as the \defn{sum side}, and to the right-hand side as the \defn{product side}. We will prove \Cref{thm:main_theorem} by computing the limit as $x \to 1$ of the specialization $y\mapsto x^t$ and $u\mapsto qt(1-x)-1$ of the tri-graded Poincar\'e series $\P^G_\sigma(x,y,u):=\P^G_{\Vs}(x,y,u)$ from \Cref{def:trigraded_poincare} in two different ways to obtain the sum side and the product side separately.

\begin{proof}[Proof of~\Cref{thm:main_theorem}]
By \Cref{cor:prod_side_specialization,cor:sum_side_specialization} below, both sides are equal to $\lim_{x\to 1} \ |G|\cdot\P^G_{\sigma}\Bigl(x,x^t,qt(1-x)-1\Bigr)$.
\end{proof}

Since $M=\Vs$ is amenable as a $G$-module (and as an $N$-module) by \cite[Thm.~2.13]{orlik1980unitary}, we can apply both \Cref{prop:general_product_side,prop:general_sum_side} in this case to obtain \[\displaystyle\frac{1}{|G|}\sum\limits_{g\in G}\frac{{\textstyle\prod_{e\in\ENs}}\mathrm{det}\left(1+y^eug|(\UNs)_e\right)}{\mathrm{det}\left(1-xg|V\right)}=\P^G_\sigma(x,y,u)=\prod_{i=1}^r\displaystyle\frac{\left(1+x^{e_i^G(\UNs)}y^{e_i^N(\Vs)}u\right)}{\left(1-x^{d_i^G}\right)},\] where $\ENs:=\mathcal{E}_{\Vs}=\{e_1^N(\Vs),\dots,e_r^N(\Vs)\}$ denotes the set of $\Vs$-exponents of $N$ as before. The product side of \Cref{thm:main_theorem} follows immediately.

\begin{corollary}[Product side specialization]
\[\lim_{x\to 1} \ |G|\cdot\P^G_\sigma\Bigl(x,x^t,qt(1-x)-1\Bigr)=\prod_{i=1}^r \left(qt+e_i^N(\Vs) t + e_i^G(\UNs)\right).\]
\label{cor:prod_side_specialization}
\end{corollary}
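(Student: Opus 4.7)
The strategy is to start from the closed-form product expression for $\P^G_\sigma(x,y,u)$ supplied by \Cref{prop:general_product_side} applied to the $G$-module $M = \Vs$ (which is amenable as a $G$-module by \cite[Thm.~2.13]{orlik1980unitary}), namely
\[\P^G_\sigma(x,y,u) = \prod_{i=1}^r \frac{1 + x^{e_i^G(\UNs)} y^{e_i^N(\Vs)} u}{1 - x^{d_i^G}},\]
and then perform the substitution $y = x^t$, $u = qt(1-x) - 1$, multiply by $|G| = \prod_{i=1}^r d_i^G$, and take the limit as $x \to 1$. This exactly mirrors the classical Orlik--Solomon specialization used to pass from \Cref{cor:sol2} to \Cref{thm:orlik_solomon}, with the single new wrinkle that the exponent of $x$ in each numerator is now split between a $t$-weighted piece and an unweighted piece.

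Concretely, I would abbreviate $\alpha_i := e_i^G(\UNs) + t\cdot e_i^N(\Vs)$, so that after the substitution the numerator of the $i$-th factor in the product is
\[1 + x^{\alpha_i}\bigl(qt(1-x) - 1\bigr) = \bigl(1 - x^{\alpha_i}\bigr) + qt(1-x)\,x^{\alpha_i}.\]
Both this numerator and the denominator $1 - x^{d_i^G}$ vanish at $x = 1$, so the next step is to factor $(1-x)$ out of each using $1 - x^{k} = (1-x)\sum_{j=0}^{k-1} x^j$, producing
\[\frac{\sum_{j=0}^{\alpha_i - 1} x^j + qt\,x^{\alpha_i}}{\sum_{j=0}^{d_i^G - 1} x^j}.\]
Evaluating at $x = 1$ gives $(\alpha_i + qt)/d_i^G$ for the $i$-th factor, and multiplying together across $i = 1,\dots,r$ and then by $|G| = \prod_{i=1}^r d_i^G$ cancels the denominators and yields
\[\prod_{i=1}^r (qt + \alpha_i) = \prod_{i=1}^r \bigl(qt + e_i^N(\Vs)\,t + e_i^G(\UNs)\bigr),\]
which is the claimed product side.

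I do not expect a serious obstacle here: the computation is purely mechanical once \Cref{prop:general_product_side} is in place, and the numerology in \Cref{thm:numbers} is used only implicitly to justify that the indexing of $e_i^N(\Vs)$ and $e_i^G(\UNs)$ is consistent with that of $d_i^G$ so that the factors of the product can be paired correctly. The only mild care required is in simultaneously factoring $(1-x)$ out of both the $1 - x^{\alpha_i}$ term and the $qt(1-x)x^{\alpha_i}$ term in each numerator before passing to the limit; once that is done, the rest is bookkeeping.
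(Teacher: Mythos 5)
Your proposal is correct and follows essentially the same route as the paper: apply \Cref{prop:general_product_side} to $M=\Vs$, substitute $y=x^t$ and $u=qt(1-x)-1$, split each numerator as $\bigl(1-x^{\alpha_i}\bigr)+qt(1-x)x^{\alpha_i}$, and evaluate the limit factor by factor against the denominators $1-x^{d_i^G}$, cancelling $|G|=\prod_i d_i^G$. The only cosmetic difference is that the paper passes to the limit directly (each ratio tends to $\alpha_i/d_i^G$ or $qt/d_i^G$) rather than via the geometric-sum factorization $1-x^k=(1-x)\sum_j x^j$, which in your write-up implicitly assumes $\alpha_i$ is a nonnegative integer; this is harmless since the identity is polynomial in $t$.
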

\begin{proof}
We compute:
\begin{align*}
&|G|\prod_{i=1}^r\frac{1+x^{e_i^G(\UNs) }y^{e_i^N(\Vs)}u}{1-x^{d_i^G}}\Bigg|_{\substack{y=x^t \\ u=qt(1-x)-1\\ x\to 1}}  \\&=
|G|\lim_{x \to 1} \prod_{i=1}^r \left(\frac{x^{e_i^G(\UNs)+t e_i^N(\Vs) }qt(1-x)}{1-x^{d_i^G}}+\frac{1-x^{e_i^G(\UNs)+t e_i^N(\Vs)}}{1-x^{d_i^G}}\right)\\
&=\prod_{i=1}^r \left(qt+e_i^N(\Vs) t + e_i^G(\UNs)\right).\qedhere
\end{align*}
\end{proof}

Our argument for the sum side of \Cref{thm:main_theorem} is more delicate. The reason for this is that for $g\in G$ the fixed space of $g$ acting on $\UNs$ often has larger dimension than the fixed space of $g$ acting on $\Vs$, which causes many terms in the term-by-term limit to be zero.

It turns out, as we will now show, that the contributions \emph{are} correct when taken coset-by-coset. For this, let us define for each coset $Ng\in H=G/N$ the \defn{twisted Poincar\'e series}\[\P^{Ng}_\sigma(x,y,u):=\frac{1}{|N|}\sum_{\substack{n\in N \\ \ell,e,p\geq 0}}\mathrm{tr}(ng|(S(V^*)_\ell\otimes({\textstyle\bigwedge^p}\UNsd)_e))x^\ell y^eu^p,\] so that $\P^G_\sigma(x,y,u)=\frac{1}{|H|}\sum_{Ng\in H}\P^{Ng}_\sigma(x,y,u)$. The following result is proved along the same lines as \Cref{prop:general_sum_side} and serves as an equivalent definition of $\P^{Ng}_\sigma(x,y,u)$.

\begin{lemma}\label{lem:twisted_poincare_sum_1}
$\displaystyle\P^{Ng}_\sigma(x,y,u)=\frac{1}{|N|}\sum_{n \in N} \frac{\prod_{e\in\ENs} \det\left(1+u y^{e} (ng)|_{\UNsde}\right) }{\det(1-x(ng)|_{\Vd})}.$
\end{lemma}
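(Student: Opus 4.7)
The plan is to mimic the proof of \Cref{prop:general_sum_side} verbatim, but with the averaging restricted to the coset $Ng$ in place of all of $G$. First, for each element $ng \in Ng$ define the element-level trigraded trace series
\[
\P^{ng}_\sigma(x,y,u) := \sum_{\ell,e,p\geq 0}\mathrm{tr}\bigl(ng \mid S(V^*)_\ell \otimes ({\textstyle\bigwedge^p}\UNsd)_e\bigr) x^\ell y^e u^p,
\]
so that $\P^{Ng}_\sigma(x,y,u) = \frac{1}{|N|}\sum_{n\in N}\P^{ng}_\sigma(x,y,u)$ by definition. Since $S(V^*)$ and $\bigwedge\UNsd$ are independently graded (by $\ell$ and by $(e,p)$, respectively) and the trace on a tensor product of modules factors as the product of traces, I would factor
\[
\P^{ng}_\sigma(x,y,u) = \left(\sum_{\ell\geq 0}\mathrm{tr}\bigl(ng\mid S(V^*)_\ell\bigr)x^\ell\right)\left(\sum_{e,p\geq 0}\mathrm{tr}\bigl(ng\mid({\textstyle\bigwedge^p}\UNsd)_e\bigr)y^eu^p\right).
\]

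For the first factor, the standard character computation on a symmetric algebra yields
\[
\sum_{\ell\geq 0}\mathrm{tr}\bigl(ng\mid S(V^*)_\ell\bigr)x^\ell = \frac{1}{\det(1 - x(ng)\mid_{V^*})},
\]
exactly as in the proof of \Cref{prop:general_sum_side}. For the second factor, I would exploit the graded decomposition $\UNsd = \bigoplus_{e\in\ENs}\UNsde$, which induces an isomorphism of bigraded $G$-modules $\bigwedge \UNsd \simeq \bigotimes_{e\in\ENs}\bigwedge\UNsde$. Applying the standard identity $\sum_{p\geq 0}\mathrm{tr}(h\mid\bigwedge^p W)u^p = \det(1+uh\mid W)$ to each graded piece $\bigwedge\UNsde$ (weighting by $y^e$ to track the $e$-grading) then gives
\[
\sum_{e,p\geq 0}\mathrm{tr}\bigl(ng\mid({\textstyle\bigwedge^p}\UNsd)_e\bigr)y^eu^p = \prod_{e\in\ENs}\det\bigl(1 + u y^e (ng)\mid_{\UNsde}\bigr).
\]

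Combining the two factors and averaging over $n\in N$ yields the claimed formula for $\P^{Ng}_\sigma(x,y,u)$. There is no serious obstacle here: the only things to verify are the standard symmetric- and exterior-algebra trace generating-function identities, and the compatibility of the bigrading on $\bigwedge \UNsd$ with the direct sum decomposition $\UNsd = \bigoplus_e \UNsde$. The mildest point of care is keeping the convention $\det(1-x(ng)\mid V^*)$ consistent with how the action on $S(V^*)$ is extended, but this is identical to the convention already fixed in \Cref{prop:general_sum_side} and requires no adjustment.
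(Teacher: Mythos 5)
Your proposal is correct and follows essentially the same route as the paper: factor the trigraded trace term-by-term into the symmetric and exterior generating-function factors, use the decomposition $\bigwedge\UNsd\simeq\bigotimes_{e\in\ENs}\bigwedge\UNsde$, apply the standard symmetric/exterior power determinant identities, and average over $n\in N$. The only (cosmetic) difference is that you work throughout with traces of $ng$ on the dual spaces $V^*$ and $\UNsde$, which is exactly the form appearing in the lemma statement, whereas the analogous \Cref{prop:general_sum_side} phrases the intermediate identities via $g^{-1}$ on $V$ and $(\UNM)_e$ before a change of variable in the group sum; the two are equal determinants, and your version is in fact the more direct one here since the coset $Ng$ is not preserved under $g\mapsto g^{-1}$.
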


\begin{proof} For each $n\in N$, \begin{multline*}\sum_{\ell,e,p\geq 0}\mathrm{tr}(ng|(S(V^*)_\ell\otimes({\textstyle\bigwedge^p}\UNsd)_e))x^\ell y^eu^p\\ =\left(\sum_{\ell\geq 0}\mathrm{tr}(ng|S(V^*)_\ell)x^\ell\right)\cdot\prod_{e\in\ENs}\left(\sum_{p\geq 0}\mathrm{tr}(ng|{\textstyle\bigwedge^p}\UNsde)y^{ep}u^p\right),\end{multline*} and our result follows from taking the average over $n\in N$.\end{proof}

\begin{definition}\label{def:multiset}We will adopt the following notation for the rest of this section. Let $g\in G$. We will denote by $\bar{\lambda}_1(g),\dots,\bar{\lambda}_r(g)$ the set of eigenvalues of $g$ on $\Vd$. We choose once and for all: a $g$-eigenbasis of fundamental $N$-invariants $\n_i\in \VNd$ such that $\mathrm{deg}_\mathbf{x}(\n_i)=d_i^N$ and $g\n_i=\epsilon_i^g(\VN)\n_i$; and a $g$-eigenbasis for the Orlik-Solomon space (see \Cref{def:OS_space}) $u_i^N\in\UNsd$ such that $\mathrm{deg}_\mathbf{x}(u_i^N)=e_i^N(\Vs)$ and $gu_i^N=\epsilon_i^g(\UNs)u_i^N$. We observe as in \cite{bonnafe2006twisted} that the multisets of pairs\[\left\{\Bigl(\epsilon_i^g(E),d_i^N\Bigr) \ \middle| \ i=1,\dots,r\right\}\qquad\text{and} \qquad\left\{\Bigl(\epsilon_i^g(\UNs),e_i^N(\Vs)\Bigr) \ \middle| \ i=1,\dots, r\right\}\] depend only on $\sigma$ and the coset $Ng\in H$, and not on the choice of coset representative $g\in Ng$.
\end{definition}

\begin{proposition}
\label{prop:twisted_graded-sigma}
$\P^{Ng}_\sigma(x,y,u) = \displaystyle\prod_{i=1}^r \frac{1+ \epsilon_i^g(\UNs) u y^{e_i^N(\Vs)}}{1-\epsilon_i^g(\VN) x^{d_i^N}}.$
\end{proposition}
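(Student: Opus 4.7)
My plan is to compute $\P^{Ng}_\sigma(x,y,u)$ by combining the standard coset-twisted projection formula with the observation that $\UNsd$ consists of $N$-invariants by construction. Applying the identity $\frac{1}{|N|}\sum_{n\in N}\mathrm{tr}(ng\,|\,X)=\mathrm{tr}(g\,|\,X^N)$ (valid for any $G$-module $X$ because $N\trianglelefteq G$) to each graded summand in the definition of $\P^{Ng}_\sigma$, I obtain
\[\P^{Ng}_\sigma(x,y,u) = \sum_{\ell,e,p\geq 0} \mathrm{tr}\!\left(g\,\middle|\,\bigl(S(V^*)_\ell\otimes({\textstyle\bigwedge^p}\UNsd)_e\bigr)^{\!N}\right) x^\ell y^e u^p.\]
Since $\UNsd = (\CN\otimes \Vsd)^N$ consists of $N$-invariants, the $N$-action on each $({\textstyle\bigwedge^p}\UNsd)_e$ is trivial, so the $N$-invariants of the tensor product reduce to $S(V^*)^N_\ell \otimes ({\textstyle\bigwedge^p}\UNsd)_e$. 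Both tensor factors are stable under $g$ because $N$ is normal in $G$, and hence the trace of $g$ on the tensor product factors as a product of traces.

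Next, I would evaluate each factor using the $g$-eigenbases fixed in \Cref{def:multiset}. The graded algebra $S(V^*)^N = S(\VNd)$ is freely generated by the $g$-eigenvectors $\n_1,\dots,\n_r$ with $g\n_i = \epsilon_i^g(\VN)\n_i$ and $\mathrm{deg}_{\mathbf{x}}(\n_i)=d_i^N$, so the usual graded symmetric-algebra computation yields
\[\sum_{\ell\geq 0}\mathrm{tr}\!\left(g\,\middle|\,S(V^*)^N_\ell\right) x^\ell = \prod_{i=1}^r \frac{1}{1-\epsilon_i^g(\VN)\, x^{d_i^N}}.\]
Analogously, the eigenbasis $u_1^N,\dots,u_r^N$ of $\UNsd$ with $gu_i^N=\epsilon_i^g(\UNs)u_i^N$ and $\mathrm{deg}_\mathbf{x}(u_i^N)=e_i^N(\Vs)$, combined with the standard identity $\sum_p \mathrm{tr}(g\,|\,{\textstyle\bigwedge^p} W)u^p = \det(1+ug\,|\,W)$ refined by $\mathbf{x}$-degree, gives
\[\sum_{e,p\geq 0}\mathrm{tr}\!\left(g\,\middle|\,({\textstyle\bigwedge^p}\UNsd)_e\right) y^e u^p = \prod_{i=1}^r \bigl(1 + \epsilon_i^g(\UNs)\, u\, y^{e_i^N(\Vs)}\bigr).\]
Multiplying the two generating series yields precisely the claimed product.

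I do not anticipate any real obstacle here: the result is essentially forced by the definitions once one notices that $\UNsd$ consists of $N$-invariants. The only subtle point is that the right-hand side must depend only on the coset $Ng$ and not on the chosen representative $g$, but this is exactly the coset-independence of the multisets already recorded in \Cref{def:multiset}. The coset-wise factorization established here is precisely what is needed in the next step to recover the sum side of \Cref{thm:main_theorem} by summing over cosets $Ng \in H$ and specializing $(y,u)$ suitably.
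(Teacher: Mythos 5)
Your proposal is correct and follows essentially the same route as the paper: both rest on the averaging identity $\frac{1}{|N|}\sum_{n\in N}\mathrm{tr}(ng\,|\,X)=\mathrm{tr}(g\,|\,X^N)$ (valid since $N\trianglelefteq G$), the fact that $\UNsd$ consists of $N$-invariants so the numerator factor is independent of $n$, and the $g$-eigenbasis computations of \Cref{def:multiset} for $S(V^*)^N=S(\VNd)$ and for $\bigwedge\UNsd$. The only difference is organizational — you average the whole tensor product first and then factor, whereas the paper factors first (via \Cref{lem:twisted_poincare_sum_1}) and applies the projection argument only to the symmetric-algebra part — which changes nothing of substance.
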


\begin{proof}
We write $\P^{Ng}_\sigma(x,y,u)$ as in \Cref{lem:twisted_poincare_sum_1}. First observe that, since $\UNs$ is $N$-invariant, for any $n\in N$ we have that \[\prod_{e\in\ENs}\mathrm{det}(1+uy^{e}(ng|_{\UNsde})=\prod_{i=1}^r(1+\epsilon_i^g(\UNs)uy^{e_i^N(\Vs)}),\]
independently of $n\in N$.

Let $\mathcal{D}_N=\{d_1^N,\dots,d_r^N\}$ denote the set of degrees for $N$, and let $\VN^*_d=\spn_\CC\mathbf{N}_d$ for $d\in\mathcal{D}_N$, where as before $\mathbf{N}_d$ denotes the set of fundamental $N$-invariants having degree $d$. Since $S(\VN^*)\simeq\bigotimes_{d\in\mathcal{D}_N}S(\VN^*_d),$ we have that
\[\sum_{\ell\geq 0}\mathrm{tr}\bigl(g|S(\VN^*)_\ell\bigr)x^\ell = \prod_{d\in\mathcal{D}_N}\Bigl(\sum_{\ell\geq 0}\bigl(\mathrm{tr}(g|\mathrm{Sym}^\ell(\VN^*_d))(x^{d})^\ell\bigr), \] where $S(\VN^*)_\ell:=S(\VN^*)\cap \mathrm{Sym}^\ell(V^*)=\mathrm{Sym}^\ell(V^*)^N$, and $\mathrm{Sym}^\ell(V^*)$ denotes the $\ell$-th symmetric power of $V^*$. On the other hand,
\[\prod_{d\in\mathcal{D}_N}\Bigl(\sum_{\ell\geq 0}\mathrm{tr}\bigl(g|\mathrm{Sym}^\ell(\VN^*_{d})\bigr)(x^{d})^\ell\Bigr)=\prod_{d\in\mathcal{D}_N}\frac{1}{\mathrm{det}(1-x^{d}(g|_{\VN^*_{d}}))}=\prod_{i=1}^r\frac{1}{1-\epsilon_i^g(\VN)x_i^{d_i^N}}.\]
Therefore, \[\sum_{\ell\geq 0}\mathrm{tr}\bigl(g|S(\VN^*)_\ell\bigr)x^\ell=\prod_{i=1}^r\frac{1}{1-\epsilon_i^g(\VN)x_i^{d_i^N}}.\]

Since, for $n\in N$, \[\sum_{\ell\geq 0}\mathrm{tr}\bigl(ng|\mathrm{Sym}^\ell(V^*)\bigr)x^\ell = \frac{1}{\mathrm{det}(1-x(ng|_{V^*}))},\] it remains to show that \[\frac{1}{|N|}\sum_{n\in N}\Bigl(\sum_{\ell\geq 0}\mathrm{tr}\bigl(ng|\mathrm{Sym}^\ell(V^*)\bigr)x^\ell\Bigr)=\sum_{\ell\geq 0}\mathrm{tr}\bigl(g|S(\VN^*)_\ell\bigr)x^\ell,\] or equivalently that for each $\ell\geq 0$ we have that \[\frac{1}{|N|}\sum_{n\in N}\Bigl(\mathrm{tr}\bigl(ng|\mathrm{Sym}^\ell(V^*)\bigr)\Bigr)=\mathrm{tr}\bigl(g|S(\VN^*)_\ell\bigr).\]
To see this, note that the operator on $\mathrm{Sym}^\ell(V^*)$ given by \[\frac{1}{|N|}\sum_{n\in N}ng=g\cdot\left(\frac{1}{|N|}\sum_{n\in N}n\right)=g\circ\mathrm{pr}^N_\ell,\] where $\mathrm{pr}^N_\ell=\frac{1}{|N|}\sum_{n\in N} n$ is the projection from $\mathrm{Sym}^\ell(V^*)$ onto its $g$-stable subspace $\mathrm{Sym}^\ell(V^*)^N=S(E^*)_\ell$, whence $\mathrm{tr}\bigl((g\circ\mathrm{pr}^N_\ell)|\mathrm{Sym}^\ell(V^*)\bigr)=\mathrm{tr}\bigl(g|S(E^*)_\ell\bigr)$. \end{proof}

\begin{proposition}[{\cite[Theorem 3.1]{bonnafe2006twisted}}]
\label{thm:twisted_os} 
\[\frac{1}{|N|}\sum_{n \in N} \frac{\det(1+u (ng)|_{\Vsd})}{\det(1-x (ng)|_{\Vd})} = \prod_{i=1}^r \frac{1+ \epsilon_{i}^{g}(\UNs) u x^{e_i^N(\Vs)}}{1-\epsilon_{i}^{g}(\VN) x^{d_i^N}}=\P^{Ng}_\sigma(x,x,u),\]
\end{proposition}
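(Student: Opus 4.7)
The plan is to prove the two equalities of the proposition in turn. The rightmost equality $\P^{Ng}_\sigma(x,x,u) = \prod_{i=1}^r \frac{1+\epsilon_i^g(\UNs)ux^{e_i^N(\Vs)}}{1-\epsilon_i^g(\VN)x^{d_i^N}}$ is immediate by specializing $y=x$ in the formula of \Cref{prop:twisted_graded-sigma}; the substantive content is the left equality, which I would establish by computing the $g$-twisted Poincar\'e series of $(S(V^*)\otimes\bigwedge\Vsd)^N$ in two different ways using amenable twisted invariant theory for the normal reflection subgroup $N\trianglelefteq G$.

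First, I would recognize the left-hand side of the proposition as a trace on $N$-invariants. Observe that $\frac{1}{|N|}\sum_{n\in N}ng = g\circ\mathrm{pr}^N$, where $\mathrm{pr}^N:=\frac{1}{|N|}\sum_{n\in N}n$ projects onto $N$-invariants; since $N\trianglelefteq G$, the subspace of $N$-invariants is $g$-stable, and so the trace of $g\circ\mathrm{pr}^N$ on $S(V^*)_\ell\otimes\bigwedge^p\Vsd$ coincides with the trace of $g$ on $(S(V^*)_\ell\otimes\bigwedge^p\Vsd)^N$. Applying the standard identity $\sum_{\ell,p}\mathrm{tr}(h|S(V^*)_\ell\otimes\bigwedge^p\Vsd)x^\ell u^p = \det(1+uh|_{\Vsd})/\det(1-xh|_{\Vd})$ to $h=ng$ termwise and averaging over $n\in N$ shows that the left-hand side of the proposition equals $\sum_{\ell,p}\mathrm{tr}\bigl(g\,|\,(S(V^*)_\ell\otimes\bigwedge^p\Vsd)^N\bigr)x^\ell u^p$.

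Next, since $\Vs$ is amenable as an $N$-module by \cite[Thm.~2.13]{orlik1980unitary}, \Cref{thm:OS_amenable} applied to the reflection group $N$ gives a graded isomorphism $(S(V^*)\otimes\bigwedge\Vsd)^N \simeq S(V^*)^N \otimes \bigwedge\UNsd$ realized by multiplication. Because $G$ normalizes $N$, \Cref{rem:harmonic_stable} implies that the space $\CN$ is $G$-stable, so $\UNsd=(\CN\otimes\Vsd)^N$ inherits a $G$-action and the above multiplication isomorphism is $G$-equivariant. I would then compute the trace of $g$ on the right-hand side using the $g$-eigenbases from \Cref{def:multiset}: the symmetric algebra $S(V^*)^N=\mathbb{C}[\n_1,\dots,\n_r]$ on the $g$-eigenvectors $\n_i$ of $\mathbf{x}$-degree $d_i^N$ contributes $\prod_{i=1}^r(1-\epsilon_i^g(\VN)x^{d_i^N})^{-1}$, while the exterior algebra $\bigwedge\UNsd$ on the $g$-eigenvectors $u_i^N$ of $\mathbf{x}$-degree $e_i^N(\Vs)$ contributes $\prod_{i=1}^r(1+\epsilon_i^g(\UNs)ux^{e_i^N(\Vs)})$; their product is the middle expression of the proposition.

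The principal obstacle is the $G$-equivariance of the amenable Orlik-Solomon decomposition for $N$ in the previous step; this is essentially the content of the ``twisted invariant theory'' of \cite{bonnafe2006twisted}, whose \cite[Theorem~3.1]{bonnafe2006twisted} is in fact the source of the present proposition. In our restricted setting of a normal reflection subgroup, the hypothesis from that reference that $g$ lie in the normalizer of $N$ in $\mathrm{GL}(V)$ is automatic for every $g\in G$, so the cited theorem applies directly and the sketch above amounts to a verification of their construction in this special situation.
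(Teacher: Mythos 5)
Your proposal is correct and matches the paper's treatment: the second equality is obtained by setting $y=x$ in \Cref{prop:twisted_graded-sigma}, and the first equality is precisely \cite[Theorem 3.1]{bonnafe2006twisted}, which the paper simply cites. Your additional sketch of that cited result---identifying the averaged sum as $\sum_{\ell,p}\mathrm{tr}\bigl(g\mid(S(V^*)_\ell\otimes{\textstyle\bigwedge^p}\Vsd)^N\bigr)x^\ell u^p$ via the projection $g\circ\mathrm{pr}^N$ and then computing the trace through the $G$-equivariant multiplication isomorphism onto $S(V^*)^N\otimes\bigwedge\UNsd$---is a faithful unwinding of the twisted invariant theory and is sound.
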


\begin{proof}
The first equality is a special case of \cite[Theorem 3.1]{bonnafe2006twisted} (where we note that the change in sign from the $-u$ in their notation to $+u$ in our notation is harmless), and the second equality follows directly from \Cref{prop:twisted_graded-sigma}.
\end{proof}

We obtain the following crucial specialization of \Cref{prop:twisted_graded-sigma}, which exploits the similarity between~\Cref{thm:twisted_os} and~\Cref{prop:twisted_graded-sigma}, and is inspired by \cite[Theorem~3.3]{bonnafe2006twisted}.

\begin{proposition}
\label{cor:twisted_graded}For $g\in G$, with notation as in \Cref{def:multiset},
\begin{equation}\lim_{x\to 1} \ |N|\cdot\P^{Ng}_\sigma\Bigl(x,x^t,qt(1-x)-1\Bigr) = t^{\fix_E (g)}\sum_{n \in N}\left(\prod\limits_{\bar{\lambda}_i(ng)\neq 1} \frac{1-\bar{\lambda}_i(ng)^\sigma}{1-\bar{\lambda}_i(ng)\vphantom{\Big|}} \right) q^{\fix_V (ng)}.\label{eq:subtle}\end{equation}
\end{proposition}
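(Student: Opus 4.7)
The plan is to apply \Cref{prop:twisted_graded-sigma} to write
\[|N|\cdot\P^{Ng}_\sigma\bigl(x,x^t,qt(1-x)-1\bigr) = |N|\prod_{i=1}^r\frac{1+\epsilon_i^g(\UNs)\bigl(qt(1-x)-1\bigr)x^{te_i^N(\Vs)}}{1-\epsilon_i^g(\VN)x^{d_i^N}},\]
and then to extract its limit as $x\to 1$ by a careful factor-by-factor comparison with the analogous specialization $y=x$, $u=q(1-x)-1$, for which \Cref{thm:twisted_os} supplies a direct sum expression.

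First I would partition the indices into $I_1:=\{i:\epsilon_i^g(\VN)=1\}$ (so that $|I_1|=\fix_E(g)$) and $J_1:=\{i:\epsilon_i^g(\UNs)=1\}$, and rewrite each numerator factor as
\[\bigl(1-\epsilon_i^g(\UNs)x^{te_i^N(\Vs)}\bigr)+\epsilon_i^g(\UNs)x^{te_i^N(\Vs)}\cdot qt(1-x).\]
As $x\to 1$, the denominator factors contribute the leading term $(1-x)^{|I_1|}\prod_{i\in I_1}d_i^N\cdot\prod_{i\notin I_1}(1-\epsilon_i^g(\VN))$; each $J_1$-numerator factor contributes leading $t\bigl(e_i^N(\Vs)+q\bigr)(1-x)$; and each non-$J_1$ numerator factor tends to the nonzero constant $1-\epsilon_i^g(\UNs)$.

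The same analysis applied to $y=x$ and $u=q(1-x)-1$ produces the identical denominator and non-$J_1$ numerator contributions, but now each $J_1$ numerator factor contributes leading $\bigl(e_i^N(\Vs)+q\bigr)(1-x)$---without the factor of $t$. Hence the two specializations of $|N|\cdot\P^{Ng}_\sigma$ differ at leading order exactly by the multiplicative factor $t^{|J_1|}$, and both share the ambient factor $(1-x)^{|J_1|-|I_1|}$. By \Cref{thm:twisted_os}, the $y=x$ specialization coincides termwise with $\sum_{n\in N}\det\bigl(1+(q(1-x)-1)(ng)|_{\Vsd}\bigr)/\det(1-x(ng)|_{\Vd})$; since $\sigma$ fixes $1$ and permutes roots of unity, one has $\bar\lambda_i(ng)^\sigma=1 \iff \bar\lambda_i(ng)=1$, so each summand has a finite limit at $x=1$ equal to $q^{\fix_V(ng)}\prod_{\bar\lambda_i(ng)\neq 1}\frac{1-\bar\lambda_i(ng)^\sigma}{1-\bar\lambda_i(ng)}$.

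The main obstacle is reconciling the exponent $t^{|J_1|}$ produced by this calculation with the target exponent $t^{\fix_E(g)}=t^{|I_1|}$. The finiteness of the Bonnaf\'e sum above forces $|J_1|\geq|I_1|$. When $|J_1|=|I_1|$ these exponents coincide and the identity \eqref{eq:subtle} follows immediately by combining the previous steps. When $|J_1|>|I_1|$ the surviving factor $(1-x)^{|J_1|-|I_1|}$ drives both $\lim_{x\to 1}|N|\cdot\P^{Ng}_\sigma(x,x^t,qt(1-x)-1)$ and the Bonnaf\'e sum to zero, so both sides of \eqref{eq:subtle} vanish and the identity again holds trivially.
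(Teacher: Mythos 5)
Your proof is correct and follows essentially the same route as the paper's: both apply \Cref{prop:twisted_graded-sigma}, analyze the limit factor by factor according to which eigenvalues $\epsilon_i^g(\VN)$ and $\epsilon_i^g(\UNs)$ equal $1$, and then identify the resulting polynomial in $q$ via \Cref{thm:twisted_os} together with the standard Orlik--Solomon specialization argument. The only cosmetic differences are that you deduce $\fix_{\UNs}(g)\geq\fix_{\VN}(g)$ from the finiteness of the Bonnaf\'e sum rather than citing \cite[Proposition 3.2]{bonnafe2006twisted}, and you organize the case analysis as a direct leading-order comparison of the $y=x^t$ and $y=x$ specializations rather than first computing the limit and then setting $t=1$.
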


\begin{proof} Let us agree to index the pairs $(\epsilon_i^g(\VN),d_i^N)$ and $(\epsilon_i^g(\UNs),e_i^N(\Vs))$ in the multisets from \Cref{def:multiset} such that $\epsilon_i^g(\VN)=1$ for $1\leq i \leq \fix_{\VN}(g)$ (if $\fix_{\VN}(g)\neq 0$) and $\epsilon_i^g(\UNs)=1$ for $1\leq i\leq\fix_{\UNs}(g)$ (if $\fix_{\UNs}(g)\neq 0$). By \Cref{prop:twisted_graded-sigma}, the left-hand side of \Cref{eq:subtle} is \[|N|\cdot\lim_{x\to 1}\ \displaystyle\prod_{i=1}^r \left(\frac{\epsilon_i^g(\UNs) qt(1-x) x^{te_i^N(\Vs)}}{1-\epsilon_i^g(\VN) x^{d_i^N}}+\frac{1-\epsilon_i^g(\UNs)x^{te_i^N(\Vs)}}{1-\epsilon_i^g(\VN)x^{d_i^N}}\right).\] By \cite[Proposition 3.2]{bonnafe2006twisted}, $\fix_{\UNs} (g) \geq \fix_{\VN} (g)$. We will compute the above limit for the partial products ranging over $1\leq i \leq\fix_{\VN}(g)$; $\fix_{\VN}(g)+1\leq i\leq \fix_{\UNs}(g)$; and $\fix_{\UNs}(g)+1\leq i \leq r$ separately.

Since $\epsilon_i^g(\VN)=1=\epsilon_i^g(\UNs)$ for $1\leq i \leq \fix_{\VN}(g)$, we have that \[\lim_{x\to 1}\prod_{i=1}^{\fix_{\VN}(g)}\frac{qt(1-x)x^{te_i^N(\Vs)}}{1-x^{d_i^N}}+\frac{1-x^{te_i^N(\Vs)}}{1-x^{d_i^N}}=\prod_{i=1}^{\fix_{\VN}(g)}\frac{qt+te_i^N(\Vs)}{d_i^N}.\] Since $\epsilon_i^g(\VN)\neq 1\neq\epsilon_i^g(\UNs)$ for $\fix_{\UNs}(g)+1\leq i\leq r$, we have that \[\lim_{x\to 1}\prod_{i=\fix_{\UNs}(g)+1}^r\left(\frac{1+\epsilon_i^g(\UNs) \bigl(qt(1-x) -1\bigr)x^{te_i^N(\Vs)}}{1-\epsilon_i^g(\VN) x^{d_i^N}}\right)=\prod_{i=\fix_{\UNs}(g)+1}^r\frac{1-\epsilon_i^g(\UNs)}{1-\epsilon_i^g(\VN)}.\] If the inequality $\fix_{\UNs}(g)>\fix_{\VN}(g)$ is strict, so that $\epsilon_i^g(\VN)\neq 1=\epsilon_i^g(\UNs)$ for $\fix_{\VN}(g)+1\leq i\leq \fix_{\UNs}(g)$, then we see that for each such $i$ the limit of the corresponding factor is \[\lim_{x\to 1}\frac{1+\bigl(qt(1-x)-1\bigr)x^{te_i^N(\Vs)}}{1-\epsilon_i^g(\VN)x^{d_i^N}}=0.\] 

Therefore, if $\fix_{\UNs}(g)>\fix_{\VN}(g)$, then (cf.~\cite[Theorem~3.3]{bonnafe2006twisted})\[\lim_{x\to 1}|N|\cdot\P^{Ng}_\sigma\Bigl(x,x^t,qt(1-x)-1\Bigr)=0.\] On the other hand, if $\fix_{\UNs}(g)=\fix_{\VN}(g)$, then (cf.~\cite[Theorem~3.3]{bonnafe2006twisted})\[\lim_{x\to 1}|N|\cdot\P^{Ng}_\sigma\Bigl(x,x^t,qt(1-x)-1\Bigr)=t^{\fix_{\VN}(g)}\!\prod_{i=1}^{\fix_{\VN}(g)}\!(q+e_i^N(\Vs))\!\!\prod_{i=\fix_{\VN}(g)+1}^r\!\!\frac{1-\epsilon_i^g(\UNs)}{1-\epsilon_i^g(\VN)}d_i^N.\]

In any case, we have shown that the left-hand side of \Cref{eq:subtle} is $t^{\fix_{\VN}(g)}\cdot P(q)$ for some $P(q)\in\CC[q]$. To conclude the proof, it suffices to compare the left- and right-hand sides of \Cref{eq:subtle} at $t=1$. For this, we observe as in \cite[Theorem~3.3]{bonnafe2006twisted} that, as a consequence of \Cref{thm:twisted_os} and the arguments of \cite[Theorem~3.3]{orlik1980unitary} that are now standard, \begin{align*}\lim_{x\to 1} \ |N|\cdot\P^{Ng}_\sigma\Bigl(x,x,q(1-x)-1\Bigr)=&\sum_{n\in N}\left(\prod_{i=1}^r\frac{1+\bar{\lambda}_i(ng)^\sigma u}{1-\bar{\lambda}_i(ng)x\vphantom{\Big|}}\right)\Bigg|_{\substack{u=q(1-x)-1\\ x \to 1}}\\
=&\sum_{n \in N}\left(\prod\limits_{\bar{\lambda}_i(ng)\neq 1} \frac{1-\bar{\lambda}_i(ng)^\sigma}{1-\bar{\lambda}_i(ng)\vphantom{\Big|}} \right) q^{\fix_V (ng)}.\qedhere\end{align*}\end{proof}

\begin{corollary}[Sum side specialization]\label{cor:sum_side_specialization}
\[\lim_{x\to 1}|G|\cdot\P^G_\sigma\Bigl(x,x^t,qt(1-x)-1\Bigr)=\sum_{g \in G}\left( \prod_{\lambda_i(g) \neq 1} \frac{1-\lambda_i(g)^\sigma}{1-\lambda_i(g)}\right) q^{\fix_V (g)} t^{\fix_{E} (g)}.\]
\end{corollary}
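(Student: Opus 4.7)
The plan is to derive the sum side specialization by combining the coset-by-coset limit in \Cref{cor:twisted_graded} with the decomposition of the $G$-average as an average over $H$-cosets of $N$-averages. Specifically, since $|G|=|H|\cdot|N|$ and the projection $\tfrac{1}{|G|}\sum_{g\in G}g$ can be computed by first averaging $\tfrac{1}{|N|}\sum_{n\in N}n$ inside each of the $|H|$ cosets $Ng\in G/N=H$, the definitions of $\P^G_\sigma$ and $\P^{Ng}_\sigma$ immediately give
\[
|G|\cdot\P^G_\sigma(x,y,u)=\sum_{Ng\in H}|N|\cdot\P^{Ng}_\sigma(x,y,u).
\]

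Next, I would apply the specialization $y\mapsto x^t$ and $u\mapsto qt(1-x)-1$, pass the limit $x\to 1$ inside the finite sum over cosets, and invoke \Cref{cor:twisted_graded} term-by-term. This yields
\[
\lim_{x\to 1}|G|\cdot\P^G_\sigma\bigl(x,x^t,qt(1-x)-1\bigr)=\sum_{Ng\in H}t^{\fix_E(g)}\sum_{n\in N}\left(\prod_{\bar{\lambda}_i(ng)\neq 1}\frac{1-\bar{\lambda}_i(ng)^\sigma}{1-\bar{\lambda}_i(ng)}\right)q^{\fix_V(ng)}.
\]

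The final step is to observe that $N$ acts trivially on $E=V/N$, so that $\fix_E(ng)=\fix_E(g)$ for every $n\in N$. Consequently the factor $t^{\fix_E(g)}$ is actually constant on the coset $Ng$ and may be absorbed into the inner sum as $t^{\fix_E(ng)}$, making the summand depend only on $ng\in G$. The resulting double sum over $Ng\in H$ and $n\in N$ is then nothing but the single sum over $g\in G$, producing exactly the right-hand side of the corollary.

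This argument encounters no serious obstacles, since the hard work has already been carried out in \Cref{thm:twisted_os} and \Cref{cor:twisted_graded}; only the bookkeeping of the coset decomposition and the triviality of $N$ acting on $E$ remains. It is worth emphasizing, however, that---unlike the naive term-by-term limit of the full $G$-average, which typically vanishes on many $g\in G$ because $\fix_{\UNs}(g)>\fix_V(g)$---working coset-by-coset is precisely what rescues the correct nonzero contributions. This is the whole point of importing the twisted invariant theory of \cite{bonnafe2006twisted} into our setting.
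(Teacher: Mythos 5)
Your argument is correct and is essentially the paper's own proof: decompose $|G|\cdot\P^G_\sigma$ into the $|H|$ coset contributions $|N|\cdot\P^{Ng}_\sigma$, apply \Cref{cor:twisted_graded} to each coset, and use that $\fix_{\VN}(ng)=\fix_{\VN}(g)$ for $n\in N$ to reassemble a single sum over $G$. The one detail you elide is that \Cref{cor:twisted_graded} is stated in terms of the eigenvalues $\bar{\lambda}_i(ng)$ of $ng$ acting on $V^*$ (see \Cref{def:multiset}), not on $V$, so a final reindexing $g\mapsto g^{-1}$ of the sum (which leaves $\fix_V$ and $\fix_{\VN}$ unchanged) is needed to land exactly on the stated right-hand side.
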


\begin{proof}
Let $g_1,\dots,g_{|H|}\in G$ be a full set of coset representatives for $G/N=H$. Since $\P^G_\sigma(x,y,u)=\frac{1}{|H|}\sum_{j=1}^{|H|}\P^{Ng_j}_\sigma(x,y,u)$ and $\fix_{\VN}(g_j)=\fix_{\VN}(ng_j)$ for any $n\in N$, it follows from \Cref{cor:twisted_graded} that \begin{gather*}\lim_{x\to 1}\ |G|\cdot\P^G_\sigma\Bigl(x,x^t,qt(1-x)-1\Bigr)=\sum_{j=1}^{|H|}\lim_{x\to 1}|N|\cdot\P^{Ng_j}_\sigma\Bigl(x,x^t,qt(1-x)-1\Bigr)=\\
\begin{aligned}
=&\sum_{j=1}^{|H|}t^{\fix_{\VN}(g_j)}\sum_{n\in N}\left(\prod_{\bar{\lambda}_i(ng_j)\neq 1}\frac{1-\bar{\lambda}_i(ng_j)^\sigma}{1-\bar{\lambda}_i(ng_j)\vphantom{\Big|}}\right)q^{\fix_V(ng_j)}\\
=&\sum_{g\in G}\left(\prod_{\bar{\lambda}_i(g)\neq 1}\frac{1-\bar{\lambda}_i(g)^\sigma}{1-\bar{\lambda}_i(g)\vphantom{\Big|}}\right)q^{\fix_V(g)}t^{\fix_{\VN}(g)},\end{aligned}\end{gather*} and our result follows after replacing $g$ with $g^{-1}$.\end{proof}

\begin{remark}\label{rem:main_theorem_specializations}
As mentioned in the introduction, the formula of \Cref{thm:main_theorem} corresponding to the special case $\sigma=1$ becomes \cite[Theorem~1.5]{arreche2020normal}:
\begin{equation}\label{eq:main_theorem_2}\sum_{g\in G} q^{\fix_V(g)}t^{\fix_{\VN}(g)}=\prod_{i=1}^r(qt+e_i^N(V)t+e_i^G(\VN)),\end{equation}
which recovers \Cref{eq:shephard_todd} for the reflection group $G$ by evaluating at $t=1$, since $\VN\simeq\UN$ as $G$-modules in this case by \Cref{EU} and $e_i^N(V)+e_i^G(E)=e_i^G(V)$ by \Cref{thm:numbers}, as discussed in \Cref{rem:untwisted_numbers}.

On the other hand, specializing \Cref{eq:main_theorem_2} at $q=1$ and dividing by $|N|$ on both sides again recovers \Cref{eq:shephard_todd}, but this time for the reflection group $H$: the sum-side follows from observing that $\fix_{\VN}(Ng)=\fix_{\VN}(g)$ for every $Ng\in H$. The product side follows from the equality $d_i^N\cdot e_i^H(\VN)=e_i^G(\VN)$ proved in \Cref{thm:numbers}, which is compatible with the classical identities $1+e_i^N(V)=d_i^N$ and $\prod_{i=1}^rd_i^N=|N|$ by \Cref{rem:untwisted_numbers}.

In fact, it is also possible to recover \Cref{eq:shephard_todd} for the reflection group $N$ from \Cref{eq:main_theorem_2}. Since $H$ acts faithfully on $\VN$, we have $N=\{g\in G \ | \ \fix_{\VN}(g)=r\}$, and therefore applying $\frac{1}{r!}\frac{\partial^r}{\partial t^r}$ to the sum-side of \Cref{eq:main_theorem_2} recovers the sum-side of \Cref{eq:shephard_todd} for $N$. That the analogous result obtains for the product side follows from the well-known higher Leibniz rule for the Hasse-Schmidt derivations $\delta^{(i)}:=\frac{1}{i!}\frac{\partial^i}{\partial t^i}$, which yield \[\delta^{(r)}\left(\prod_{i=1}^r(qt+e_i^N(V)t+e_i^G(\VN))\right)=\prod_{i=1}^r\delta^{(1)}(qt+e_i^N(V)t+e_i^G(\VN))=\prod_{i=1}^r(q+e_i^N(V)).\]

Similarly, as we mentioned in the introduction, for arbitrary $\sigma\in\mathrm{Gal}(\mathbb{Q}(\zeta_G)/\mathbb{Q})$ the formula of \Cref{thm:main_theorem}
\begin{equation}\label{eq:main_theorem_1}
\sum_{g\in G}\left(\prod_{\lambda_i(g)\neq 1}\frac{1-\lambda_i(g)^\sigma}{1-\lambda_i(g)}\right)q^{\fix_V(g)}t^{\fix_{\VN}(g)}=\prod_{i=1}^r(qt+e_i^N(\Vs)t+e_i^G(\UNs))
\end{equation} recovers \Cref{thm:orlik_solomon} by evaluating at $t=1$, since $e_i^N(\Vs)+e_i^G(\UNs)=e_i^G(\Vs)$ by \Cref{thm:numbers}. The above arguments also show that we recover \Cref{thm:orlik_solomon} for the reflection group $N$ again by applying $\frac{1}{r!}\frac{\partial^r}{\partial t^r}$ to both sides of \Cref{eq:main_theorem_1}.

It would be desirable to recover \Cref{thm:orlik_solomon} for the reflection group $H$ from \Cref{thm:main_theorem} in analogy with the case $\sigma=1$, by evaluating \Cref{eq:main_theorem_1} at $q=1$ and dividing by $|N|$ on both sides. But in this case we obtain something else: letting $g_1,\dots,g_{|H|}\in G$ be a full set of coset representatives for $H=G/N$, evaluating \Cref{eq:main_theorem_1} at $q=1$ yields \[\sum_{j=1}^{|H|}\left(\sum_{n\in N}\left(\prod_{\lambda_i(ng_j)\neq 1}\frac{1-\lambda_i(ng_j)^\sigma}{1-\lambda_i(ng_j)}\right)\right)t^{\fix_{\VN}(g_j)}=\prod_{i=1}^r((e_i^N(\Vs)+1)t+e_i^G(\UNs)),\] which does not immediately compare to the statement of \Cref{thm:orlik_solomon} for the reflection group $H$: \[ \sum_{j=1}^{|H|}\left(\prod_{\bar{\epsilon}_i^{\, g_j}(\VN)\neq 1}\frac{1-\bar{\epsilon}_i^{\, g_j}(\VN)^\sigma}{1-\bar{\epsilon}_i^{\, g_j}(\VN)\vphantom{\Big|}}\right)t^{\fix_{\VN}(g_j)}=\prod_{i=1}^r(t+e_i^H(\VNs)),\] where compatibly with \Cref{def:multiset} the $\bar{\epsilon}_i^{\, g_j}(\VN)$ denote the eigenvalues of $g_j\in G$ acting on $\VN$.
\end{remark}

\section{Reflexponents revisited}
\label{sec:reflexponents}
Fix $G$ a complex reflection group of rank $r$ with reflection representation $V$.  Call an $r$-dimensional representation $M$ of $G$ \defn{factorizing} if $M$ has dimension $r$ and
\[\sum_{g \in G} q^{\mathrm{fix}_V(g)} t^{\mathrm{fix}_M(g)} = \prod_{i=1}^r \Big(qt+(e_i^G(V)-m_i)t+ m_i\Big),\] for some nonnegative integers $m_1,\ldots,m_r$.  More generally, call a representation $M$ of $G$ of dimension $\dim M \leq r$ factorizing if it is factorizing in the above sense after adding in $r-\dim M$ copies of the trivial representation.

A case-by-case construction of a factorizing representation $M_\mathcal{H}$ associated to an arbitrary orbit of reflecting hyperplanes $\mathcal{H}$ was presented in~\cite{williams2019reflexponents}---with two unexplained exceptions. These factorizing representations further restricted to the reflection representation of a parabolic subgroup supported on $\mathcal{H}$. We can now give a uniform explanation for those ad-hoc identities, including the two exceptions left unexplained in~\cite[Section~5.1]{williams2019reflexponents}.

Let $\mathcal{H}$ be an orbit of hyperplanes, write $\mathcal{R}_\mathcal{H}$ for the set of reflections fixing some $H \in \mathcal{H}$, and let $N_\mathcal{H} = \left\langle \mathcal{R}_\mathcal{H} \right\rangle$ be the subgroup generated by reflections around hyperplanes in $\mathcal{H}$.  Since these reflections form a conjugacy class in $G$, $N_\mathcal{H}$ is a normal reflection subgroup of $G$.  Furthermore:
\begin{enumerate}[(i)]
\item the quotient $G/N_\mathcal{H}$ acts by reflections on the vector space of $N_\mathcal{H}$-orbits $M_\mathcal{H}$;
\item this $G$-representation $M_\mathcal{H}$ is factorizing by~\Cref{thm:main_theorem}; and
\item the mysterious indexing of the reflexponents (i.e., the $e_i^G(M_\mathcal{H})$) left unexplained in~\cite{williams2019reflexponents} is now explained in~\Cref{rem:indexing}.
\end{enumerate}

\begin{corollary}
For $\mathcal{H}$ an orbit of hyperplanes and $N_\mathcal{H} = \left\langle \mathcal{R}_\mathcal{H} \right\rangle,$ the representation $M_\mathcal{H}$ is factorizing.
\end{corollary}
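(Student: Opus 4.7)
The plan is to observe that the corollary is an immediate specialization of \Cref{thm:main_theorem} once we identify the ingredients correctly; the three numbered observations $(i)$, $(ii)$, $(iii)$ preceding the corollary already outline the argument, so my proof will simply spell them out.

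\emph{Step 1: $N_\mathcal{H}$ is a normal reflection subgroup.} Since $\mathcal{H}$ is a single $G$-orbit of hyperplanes, the associated reflections $\mathcal{R}_\mathcal{H}$ form a union of $G$-conjugacy classes (in fact just one class, after identifying reflections up to their order, since $G$ permutes the hyperplanes transitively and the stabilizer of each hyperplane is cyclic). Hence $N_\mathcal{H}=\langle\mathcal{R}_\mathcal{H}\rangle$ is generated by a $G$-invariant set of reflections and is therefore a normal reflection subgroup of $G$. By \Cref{thm:reflection_action}, $G/N_\mathcal{H}$ acts as a reflection group on $M_\mathcal{H}=V/N_\mathcal{H}=E$.

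\emph{Step 2: Apply \Cref{thm:main_theorem} with $\sigma=1$.} Taking $N=N_\mathcal{H}$ and $\sigma=1$ in \Cref{thm:main_theorem}, and using the identification $U^N\simeq E$ as graded $G$-modules provided by \Cref{EU} (so that $e_i^G(U^N_\sigma)=e_i^G(E)$), we obtain
\[\sum_{g\in G} q^{\fix_V(g)} t^{\fix_E(g)} = \prod_{i=1}^r\bigl(qt + e_i^N(V)\,t + e_i^G(E)\bigr),\]
under the indexing prescribed in \Cref{rem:indexing}.

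\emph{Step 3: Match the factorizing form.} Set $m_i:=e_i^G(E)$, which are nonnegative integers since they are $E$-exponents of $G$. By the first equality of \Cref{thm:numbers} specialized to $\sigma=1$ (and using $U^N\simeq E$ again), we have $e_i^N(V)+e_i^G(E)=e_i^G(V)$, so that $e_i^N(V)=e_i^G(V)-m_i$. Substituting this into the factorization from Step 2 yields
\[\sum_{g\in G} q^{\fix_V(g)} t^{\fix_{M_\mathcal{H}}(g)} = \prod_{i=1}^r\bigl(qt + (e_i^G(V)-m_i)\,t + m_i\bigr),\]
which is exactly the factorizing identity for $M_\mathcal{H}$ with $m_i=e_i^G(E)=e_i^G(M_\mathcal{H})$. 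There is no serious obstacle in this argument: all the heavy lifting has already been done in \Cref{thm:numbers,thm:main_theorem}, and the compatibility of indexings is precisely the content of \Cref{rem:indexing}, which also clarifies the indexing mystery alluded to in observation $(iii)$ before the corollary statement.
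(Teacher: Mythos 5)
Your proof is correct and follows exactly the route the paper takes: the corollary is stated as an immediate consequence of the observations (i)--(iii), namely that $N_\mathcal{H}$ is normal because $\mathcal{R}_\mathcal{H}$ is $G$-stable, that \Cref{thm:main_theorem} with $\sigma=1$ and $\UN\simeq\VN$ (\Cref{EU}) gives the product formula, and that \Cref{thm:numbers} rewrites $e_i^N(V)=e_i^G(V)-e_i^G(\VN)$ to match the factorizing form with $m_i=e_i^G(\VN)$. You have merely spelled out the details the paper leaves implicit.
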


We can also explain the two exceptional factorizing representations from~\cite{williams2019reflexponents}:
\begin{itemize}
\item Following the conventions of~\cite{micheltable}, $G=G_{13}=\langle s,t,u\rangle$ was observed in~\cite[Section~5.1]{williams2019reflexponents} to have a two-dimensional representation with the factorizing property. The group $N = \langle gsg^{-1}: g \in G \rangle$ (fixing the conjugacy class $\mathcal{H}_s$) is a normal subgroup isomorphic to $G(4,2,2)$ and the quotient $G/N \simeq W(A_2)\simeq \mathfrak{S}_3$ gives the unexplained two-dimensional factorizing representation in this case.
\item For $G=G(ab,b,r)=\langle s,t_2,t_2',t_3,\ldots,t_r\rangle$ with $a,b>1$ and $r>2$, we can take $N= \langle gsg^{-1}: g \in G \rangle$.  $N$ is a normal subgroup of $G$ isomorphic to $(C_a)^r$ (it consists of diagonal matrices whose diagonal entries are $a$-th roots of unity).  The quotient $G/N \simeq G(b,b,r)$ gives the unexplained $r$-dimensional factorizing representation in~\cite[Section~5.2]{williams2019reflexponents}.
\end{itemize}

\section{Classification of Normal Reflection Subgroups}
\label{sec:classification}


In this section, we state the classification of normal reflection subgroups of irreducible complex reflection groups.

Recall that $G(ab,b,r)$ is given in its \defn{standard reflection representation} as the set of $r \times r$ monomial matrices whose every non-zero entry is an $(ab)$-th root of unity and in which the product of the non-zero entries is an $a$-th root of unity. The following theorem identifies the normal reflection subgroups of the infinite family $G(ab,b,r)$.

\begin{theorem}[{\cite[Chapter 2]{lehrer2009unitary}}]
Fix positive integers $a$ and $b$, and let $a=de$.

\noindent
For rank $r=1$, $G(ab,b,1)=C_a$ has normal subgroups and quotients
\begin{enumerate}
  \item[{\rm (1.a)}] $C_a \triangleright C_d \simeq C_e$.
\end{enumerate}
For rank $r=2$, the normal subgroups and quotients are:
\begin{enumerate}
\item[{\rm (2.a)}] $G(ab,b,2)/(C_d)^2 \simeq G(eb,b,2)$,
\item[{\rm (2.b)}] $G(ab,b,2)/G(ab,db,2) \simeq C_d$,
\item[{\rm (2.c)}] $G(2a,2,2)/G(a,d,2)\simeq C_2\times C_d$.
\end{enumerate}
For rank $r\geq 3$, the normal subgroups and quotients are:
\begin{enumerate}
\item[{\rm ($r$.a)}] $G(ab,b,r)/(C_d)^r \simeq G(eb,b,r)$ and
\item[{\rm ($r$.b)}] $G(ab,b,r)/G(ab,db,r) \simeq C_d$.
\end{enumerate}
\label{thm:classification}
\end{theorem}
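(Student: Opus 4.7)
The plan is to enumerate the conjugacy classes of reflections in $G(ab,b,r)$ and determine which unions of these classes generate the listed normal reflection subgroups. Since any normal subgroup generated by reflections is a union of reflection conjugacy classes, this reduces the classification to understanding those classes. The rank $r=1$ case is immediate since $G(ab,b,1)=C_a$ is cyclic, and its normal subgroups are precisely $C_d$ for $d\mid a$ with quotients $C_e$.

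For $r\geq 2$, the reflections of $G(ab,b,r)$ fall into two families: diagonal reflections $\mathrm{diag}(1,\dots,\zeta,\dots,1)$ with $\zeta\in C_a\setminus\{1\}$, which split into $a-1$ conjugacy classes indexed by $\zeta$; and transposition-like reflections $t_{i,j}^\zeta$ sending $e_i\mapsto\zeta e_j$ and $e_j\mapsto\zeta^{-1}e_i$, with $\zeta\in C_{ab}$. Direct computation gives $\mathrm{diag}(\lambda)\cdot t_{i,j}^\zeta\cdot\mathrm{diag}(\lambda)^{-1}=t_{i,j}^{\zeta\lambda_j/\lambda_i}$; combined with the diagonal constraint $\prod_k\lambda_k\in C_a$, the achievable ratios $\lambda_j/\lambda_i$ form all of $C_{ab}$ for $r\geq 3$ (since a third coordinate absorbs the product constraint), yielding a single transposition-like class. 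For $r=2$ the ratios are confined to $C_{ab}^2\cdot C_a$, which is proper in $C_{ab}$ when $b$ is even, and the resulting splitting of the transposition-like class in $G(2a,2,2)$ is what produces the extra family in case (2.c).

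Each listed subgroup is then realized as the kernel of an explicit homomorphism, which simultaneously verifies normality and computes the quotient. The subgroup $(C_d)^r$ is the kernel of the entry-reduction map $G(ab,b,r)\to G(eb,b,r)$ obtained by reducing each entry mod $C_d$, and it is generated by the diagonal reflection classes with $C_d$-eigenvalues. The subgroup $G(ab,db,r)$ is the kernel of the product-reduction map $G(ab,b,r)\to C_a/C_e\simeq C_d$ and is generated by the transposition-like reflection class together with the diagonal reflections of $C_e$-eigenvalue. For case (2.c), the subgroup $G(a,d,2)$ inside $G(2a,2,2)$ is the intersection of an ``entries-in-$C_a$'' indicator map to $C_2$ with a product-reduction map to $C_d$, yielding quotient $C_2\times C_d$.

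Finally, exhaustiveness is checked by case analysis on which reflection classes a given normal reflection subgroup $N$ contains. If $N$ contains no transposition-like reflection, then $N=(C_d)^r$ for $C_d\subseteq C_a$ the subgroup generated by the non-trivial diagonal eigenvalues appearing in $N$. If $N$ contains a transposition-like reflection then it contains the full class (or classes, in rank $2$), and adjoining the consistent diagonal reflection classes yields $G(ab,db,r)$ for $r\geq 3$ or one of the rank-$2$ possibilities. The main obstacle is the rank-$2$ analysis, where the parity-induced splitting of the transposition-like conjugacy class in $G(2a,2,2)$ forces the separate treatment producing the exceptional family (2.c); pinning down exactly which combinations of split classes and diagonal classes yield distinct normal reflection subgroups requires careful bookkeeping not visible in higher rank.
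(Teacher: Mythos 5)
The paper does not prove this statement: it is cited verbatim to Lehrer and Taylor's book, so there is no ``paper's own proof'' against which to compare. Judged on its own merits, your outline takes the standard route --- classify by unions of reflection conjugacy classes --- and your bookkeeping of the reflections is correct: the diagonal reflections give $a-1$ classes indexed by the eigenvalue $\zeta\in C_a\setminus\{1\}$, and the transposition-type reflections $t_{ij}^\zeta$ ($\zeta\in C_{ab}$) form one class for $r\geq 3$, while for $r=2$ the ratio $\lambda_2/\lambda_1 = \lambda_2^2/(\lambda_1\lambda_2)$ is confined to $C_{ab}^2\cdot C_a = \langle\zeta_{ab}^{\gcd(2,b)}\rangle$, which is proper exactly when $b$ is even. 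Your descriptions of the normal subgroups as kernels (entry-reduction $\bmod\;C_d$ for $(r.\mathrm{a})$, product-reduction to $C_a/C_e$ for $(r.\mathrm{b})$, and the combined indicator map for $(2.\mathrm{c})$) are also correct and give the right quotients.

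The genuine gap, which you acknowledge but underestimate, is the rank-$2$ exhaustiveness. Your own calculation shows the transposition class splits whenever $b$ is even, not just when $b=2$, yet the theorem's exceptional case $(2.\mathrm{c})$ is only stated for $G(2a,2,2)$. For instance, in $G(4a,4,2)$ (i.e., $b=4$) the subgroup $\langle\,t^\zeta : \zeta\in C_{2a}\,\rangle = G(2a,2a,2)$ is generated by one transposition class, is normal by the very conjugation formula you derive, and is not of the form $(C_d)^2$, $G(4a,4d,2)$, or $G(a,d,2)$ for this parameterization. Reconciling this with the stated list requires precisely the content you defer: one must track which unions of split transposition classes and diagonal classes actually generate distinct subgroups, and then match the resulting groups against the list using the small-rank coincidences ($G(4,4,2)\simeq G(2,1,2)$, $G(2,2,2)\simeq C_2\times C_2$, etc.) that the paper itself invokes in the discussion after the theorem and in its \Cref{thm:repeated_normal}. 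Without that step the argument does not establish the exhaustiveness claim ``the normal subgroups and quotients \emph{are}'', and in fact appears to produce subgroups outside the list, so the proof is incomplete at exactly the point where the interesting phenomena occur.
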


In cases ($r$.a) for $r\geq 1$, the polycyclic group $N=(C_d)^r$ is included in $G$ as diagonal matrices with each non-zero entry a $d$-th root of unity. In cases ($r$.b) for $r\geq 2$, the normal reflection subgroup $N=G(ab,db,r)$ is included in $G$ via its standard reflection representation in $\CC^r$. In case ($2$.c), the normal reflection subgroup $N=G(a,d,2)$ occurs twice in $G$: once via its standard reflection representation, and once as the group generated by the reflections $\{\mathrm{diag}(\zeta_e^k,1),\ \mathrm{diag}(1,\zeta_e^k) \ | \ k=1,\dots,e-1\}$  (when $e\neq 1$) along with the reflections \[\left\{\begin{pmatrix}0 & \zeta_{2a}^{2k-1} \\ \zeta_{2a}^{-2k+1} & 0\end{pmatrix} \ \middle| \ k=0,\dots,a-1 \right\}.\] Independently of the factorization $a=de$, these two copies of $N$ are conjugate in $\mathrm{GL}_2(\mathbb{C})$ by the matrix $\mathrm{diag}(1,\zeta_{2a})$, which normalizes $G$.

Besides the subtlety in case ($2$.c) discussed above, there are other coincidences in rank $r=2$ where $G$ contains several copies of ``the same'' normal reflection subgroup $N$, related to the well-known isomorphisms $G(4,4,2)\simeq G(2,1,2)$ and $G(2,2,2)\simeq C_2\times C_2$ (see~\cite[Example 2.11]{lehrer2009unitary}). For example, $G(4,2,2)$ contains the dihedral group of order eight as a normal reflection subgroup in \emph{three} different ways:  $G(4,4,2)$ in one way as case ($2$.b), and $G(2,1,2)$ in two ways as case ($2$.c).  Similarly, $G(4,2,2)$ contains $C_2 \times C_2$ in three ways: $(C_2)^2$ in one way as case ($2$.a), and $G(2,2,2)$ in two ways as case ($2$.c).

The exceptional (that is, primitive) irreducible complex reflection groups $G$ and their normal reflection subgroups $N$ are listed in \Cref{tab:rank2}. This classification was computed with Sage~\cite{sagemath} using the code available at~\cite{nathancode}.  Most examples occur in rank $r=2$. In rank $r\geq 3$, every reflection has order $2$ or $3$ \cite[Theorem~8.4]{lehrer2009unitary}, and the only exceptional groups with more than a single orbit of reflections are $G_{26}$ and $G_{28}$ \cite[Table~D.2]{lehrer2009unitary}, which leads to the four non-trivial exceptional examples listed in \Cref{tab:rank2} in rank $r\geq 3$.

An isomorphism type of $N$ is unique up to conjugation in $\mathrm{GL}_r(\CC)$ sending isomorphic normal reflection subgroups to each other while stabilizing the reflection representation of $G$.  In fact, there are only two instances in the exceptional groups ($G_5 \triangleright G_4$ and $G_{28} \triangleright G(2,2,4)$) where an isomorphism type appears as a normal subgroup more than once. We gather the situations where the same isomorphism type of $N$ occurs more than once as a normal reflection subgroup of $G$ in the following result, where $N_{(i)}$ denote the different isomorphic copies of the same normal reflection subgroup $N\triangleleft G$.

\begin{table}[htbp]
\[\begin{array}{|r||c|} \hline
G & \text{List of pairs } \begin{array}{rl} N & \text{a normal reflection subgroup of }G \\ H & \text{the quotient } G/N \end{array}  \\ \hline \hline
G_5 & \begin{array}{c|c}G_4 & G_4 \\ C_3  & C_3\end{array} \\ \hline
G_6 & \begin{array}{c|c} G_4 & G(4,2,2) \\ C_2&C_3 \end{array}\\ \hline
G_7 & \begin{array}{c|c|c}G_5 & G_6 & G(4,2,2) \\ C_2 & C_3 & C_3 \times C_3 \end{array} \\ \hline \hline
G_8 & \begin{array}{c} G(4,2,2) \\ G(1,1,3) \end{array} \\ \hline
G_9 & \begin{array}{c|c|c|c} G_8 & G_{12} & G_{13} & G(4,2,2) \\ C_2 &  C_4 & C_2 & D_6 \end{array} \\ \hline
G_{10} & \begin{array}{c|c|c|c} G_5 & G_7 & G_8 & G(4,2,2) \\ C_4 & C_2 & C_3 & G(3,1,2) \end{array} \\ \hline
G_{11} & \begin{array}{c|c|c|c|c|c|c|c|c|c} G_5 & G_7 &G_8& G_9 & G_{10} & G_{12} & G_{13} & G_{14} & G_{15} & G(4,2,2) \\  C_2 \times C_4 & C_2 \times C_2 & C_2\times C_3 &C_3 & C_2 &  C_3 \times C_4 & C_3 \times C_2 & C_4 & C_2 & G(6,2,2) \end{array} \\ \hline
G_{13} & \begin{array}{c|c} G_{12} & G(4,2,2) \\ C_2 & G(1,1,3) \end{array} \\ \hline
G_{14} & \begin{array}{c|c} G_5 & G_{12} \\ C_2 & C_3 \end{array} \\ \hline
G_{15} & \begin{array}{c|c|c|c|c|c} G_5 & G_7 & G_{12} & G_{13} & G_{14} & G(4,2,2) \\ C_2 \times C_2 & C_2 & C_2 \times C_3 &C_3 & C_2 & G(3,1,2) \end{array} \\ \hline \hline
G_{17} & \begin{array}{c|c} G_{16} & G_{22} \\ C_2 & C_5 \end{array} \\ \hline
G_{18} & \begin{array}{c|c} G_{16} & G_{20} \\ C_3 & C_5 \end{array} \\ \hline
G_{19} & \begin{array}{c|c|c|c|c|c} G_{16}& G_{17} & G_{18} & G_{20} & G_{21} & G_{22} \\ C_2 \times C_3 & C_3 & C_2 & C_2 \times C_5 & C_5 & C_3 \times C_5 \end{array} \\ \hline
G_{21} & \begin{array}{c|c} G_{20} & G_{22} \\ C_2 & C_3 \end{array} \\ \hline \hline
G_{26} & \begin{array}{c|c} G(3,3,3) & G_{25} \\ G_4 & C_2 \end{array}\\ \hline
G_{28} & \begin{array}{c|c} G(2,2,4) & G(2,2,4) \\  G(1,1,3) & G(1,1,3) \end{array}\\ \hline
\end{array}\]
 \caption{The exceptional groups, their non-trivial normal reflection subgroups, and the corresponding quotient reflection groups.}
\label{tab:rank2}
\end{table}

\begin{theorem}\label{thm:repeated_normal} Suppose $G$ is an irreducible complex reflection group admitting normal reflection subgroups $N_{(1)},\dots,N_{(k)}$ for $k\geq 2$ that are pairwise isomorphic (as abstract groups) but not equal in $G$. Then $k\in\{2,3\}$ and the normalizer of $G$ in $\mathrm{GL}(V)$ permutes $\{N_{(1)},\dots,N_{(k)}\}$ transitively under conjugation. Moreover, precisely one of the following possibilities occurs.
\begin{enumerate}
\item $G=G(4,2,2)$ and there are three different normal reflection subgroups isomorphic to the dihedral group of order $8$:

\begin{enumerate} 
\item $N_{(1)}=G(2,1,2)$ as in \Cref{thm:classification}($2$.c) with $d=1$;
\item $N_{(2)}=\mathrm{diag}(1,-i)\cdot N_{(1)}\cdot \mathrm{diag}(1,i)$;
\item $N_{(3)}=G(4,4,2)$ as in \Cref{thm:classification}($2$.b) with $d=2$.
\end{enumerate}

\item $G=G(4,2,2)$ and there are $k=3$ different normal reflection subgroups isomorphic to the Klein $4$-group:
\begin{enumerate}
\item $N_{(1)}=G(2,2,2)$ as in \Cref{thm:classification}($2$.c) with $d=2$;
\item $N_{(2)}=\mathrm{diag}(1,-i)\cdot N_{(1)}\cdot \mathrm{diag}(1,i)$;
\item $N_{(3)}=(C_2)^2$ as in \Cref{thm:classification}($2$.a) with $d=2$.
\end{enumerate}
\item $G=G(2a,2,2)$ with $a>2$ and for each factor $d$ of $a$ there are $k=2$ different normal reflection subgroups isomorphic to $G(a,d,2)$: 
\begin{enumerate}
\item $N_{(1)}=G(a,d,2)$ as in \Cref{thm:classification}($2$.c); 
\item $N_{(2)}=\mathrm{diag}(1,\zeta_{2a}^{-1})\cdot N_{(1)}\cdot \mathrm{diag}(1,\zeta_{2a})$.
\end{enumerate}
\item $G=G_5$ and there are $k=2$ different normal reflection subgroups isomorphic to $G_4$.
\item $G=G_{28}$ and there are $k=2$ different normal reflection subgroups isomorphic to $G(2,2,4)$.
\end{enumerate}
\end{theorem}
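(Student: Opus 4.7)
The plan is to proceed by direct case analysis using the complete classification of normal reflection subgroups given in Theorem~\ref{thm:classification} for the infinite family $G(ab,b,r)$ and in Table~\ref{tab:rank2} for the primitive complex reflection groups. For each irreducible $G$, I will enumerate the isomorphism types of normal reflection subgroups and identify exactly which types appear more than once, and in each such case I will exhibit a conjugating element in the normalizer of $G$ inside $\mathrm{GL}(V)$ relating the isomorphic copies.

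First I would dispose of rank $r\geq 3$ in the infinite family. By Theorem~\ref{thm:classification}, the only normal reflection subgroups of $G(ab,b,r)$ are $(C_d)^r$ (case ($r$.a)) and $G(ab,db,r)$ (case ($r$.b)) for divisors $d$ of $a$. For fixed $r\geq 3$, different $d$ give groups of different orders within each family, and the diagonal family $(C_d)^r$ is abelian while $G(ab,db,r)$ contains non-trivial permutation matrices, so the two families are never isomorphic to each other. Hence no repetition arises in rank $r\geq 3$ from the infinite family, and the only exceptional example in rank $\geq 3$ reads off Table~\ref{tab:rank2} as $G_{28}\triangleright G(2,2,4)$ (appearing twice), which is case (5). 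Rank $r=1$ is immediate.

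Next I would analyze rank $r=2$ carefully. The subgroups from cases (2.a), (2.b), (2.c) of Theorem~\ref{thm:classification} are generically non-isomorphic across cases. Case (2.c) only arises in $G(2a,2,2)$, and the paragraph following Theorem~\ref{thm:classification} already identifies two isomorphic copies of $G(a,d,2)$ in $G(2a,2,2)$ conjugate by $\mathrm{diag}(1,\zeta_{2a})$, which lies in the normalizer of $G$ but not in $G$ itself when $a>2$; this yields case (3). Additional coincidences in $G(4,2,2)$ follow from the low-rank isomorphisms $G(2,1,2)\simeq G(4,4,2)$ (dihedral of order $8$) and $G(2,2,2)\simeq C_2\times C_2$ recalled in the paragraph preceding the theorem: in $G(4,2,2)$ the dihedral group of order $8$ appears once as $G(4,4,2)$ (case (2.b) with $d=2$) and twice as $G(2,1,2)$ (case (2.c) with $d=1$), giving case (1); and the Klein four-group appears once as $(C_2)^2$ (case (2.a) with $d=2$) and twice as $G(2,2,2)$ (case (2.c) with $d=2$), giving case (2). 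In every sub-case the conjugating normalizer element is either $\mathrm{diag}(1,\zeta_{2a})$ or a suitable specialization, as already recorded in the text following Theorem~\ref{thm:classification}. Finally, the exceptional cases (4) and (5) are read off Table~\ref{tab:rank2}; for $G_5$ the two copies of $G_4$ are swapped by an element that normalizes $G_5$ in $\mathrm{GL}_2(\mathbb{C})$ and interchanges the two orbits of order-$3$ reflections, while for $G_{28}=W(F_4)$ the exchange of short and long roots gives an element of the normalizer interchanging the two copies of $G(2,2,4)$.

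The main obstacle will be organizing the rank-$2$ case cleanly: several low-dimensional isomorphisms ($G(2,2,2)\simeq C_2\times C_2$ and $G(4,4,2)\simeq G(2,1,2)$) conspire to multiply the number of isomorphic copies in $G(4,2,2)$ up to exactly three, and verifying that no fourth copy exists requires an exhaustive pass through all normal reflection subgroups allowed by Theorem~\ref{thm:classification}. A secondary difficulty is exhibiting the normalizer element explicitly in cases (4) and (5), which depends on concrete matrix representations of $G_5$ and $G_{28}$; this is computationally routine but needs some care to ensure the element genuinely normalizes $G$ and does not already belong to $G$ (otherwise the two copies would coincide). The upper bound $k\leq 3$ then falls out of the case analysis, as no $G$ admits four pairwise non-equal, pairwise isomorphic normal reflection subgroups.
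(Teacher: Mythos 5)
Your first half — reading exhaustiveness and mutual exclusivity off Theorem~\ref{thm:classification} and Table~\ref{tab:rank2} — is exactly what the paper does, and your rank-$r\geq 3$ and rank-$2$ bookkeeping is fine. The gap is in the transitivity claim. For cases (1) and (2) you assert that ``the conjugating normalizer element is either $\mathrm{diag}(1,\zeta_{2a})$ or a suitable specialization, as already recorded in the text following Theorem~\ref{thm:classification}.'' That text only records that the \emph{two} copies arising from case ($2$.c) are swapped by $\mathrm{diag}(1,i)$; it says nothing about reaching the third copy $N_{(3)}$ (namely $G(4,4,2)$ in case (1), resp.\ $(C_2)^2$ in case (2)). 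In fact conjugation by $\mathrm{diag}(1,i)$ visibly \emph{fixes} both of these subgroups (it preserves diagonal matrices and multiplies the two entries of an antidiagonal matrix by $i$ and $-i$, preserving their product), so no power of it can move $N_{(3)}$ to $N_{(1)}$. You need a genuinely different normalizer element to achieve transitivity on all three copies, and you never produce one. Similarly, for cases (4) and (5) you assert that a suitable normalizing element exists (``interchanges the two orbits of order-$3$ reflections,'' ``exchange of short and long roots'') but defer its construction, flagging it as a ``secondary difficulty''; for the theorem as stated this is the main content, not a side issue.

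The paper closes this gap with a different device that you are missing: for every case except $G_{28}$, it finds a larger complex reflection group $W$ containing $G$ as a normal reflection subgroup such that the isomorphism type of $N$ is \emph{not} normal in $W$ (e.g.\ $W=G_8$ or $G_{13}$ for $G=G(4,2,2)$, $W=G(2a,1,2)$ for $G=G(2a,2,2)$, $W=G_7$ for $G=G_5$). Conjugation by $W$ then permutes the copies of $N$ with no fixed points, and with at most three copies this forces a single orbit; no explicit matrix is needed. Only for $G_{28}$, which sits inside no larger reflection group, does the paper write down an explicit orthogonal matrix $P$ exchanging the two root-length orbits. If you want to keep your explicit-matrix strategy, you must actually exhibit an element of the normalizer of $G(4,2,2)$ that moves $N_{(1)}$ to $N_{(3)}$ (a Hadamard-type matrix $\frac{1}{\sqrt 2}\left(\begin{smallmatrix}1&1\\1&-1\end{smallmatrix}\right)$ would do for case (2)) and one for $G_5$; as written, the transitivity assertion for three of the five cases is unsupported and, for cases (1)--(2), the element you name provably does not work.
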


\begin{proof} By \Cref{thm:classification} and \Cref{tab:rank2}, the possibilities above are exhaustive, and it is clear that they are mutually exclusive.

It remains to show that any two isomorphic normal reflection subgroups of $G$ are conjugate under an element of the normalizer of $G$. In every situation listed above, except for $G=G_{28}$, we can find a complex reflection group $W$ that contains $G$ as a normal reflection subgroup (in its standard reflection representation) but which does not contain (the isomorphism type of) $N$ as a normal reflection subgroup. Since there are at most three isomorphic copies of $N$ in each case, they must then form a single conjugacy class in $W$.

In cases (1) and (2), we can take $W$ to be any of $G_6$; $G_7$; $G_8$; $G_9$; $G_{10}$; $G_{11}$; $G_{13}$; or $G_{15}$; since all of these contain $G=G(4,2,2)$ as their only imprimitive normal reflection subgroup according to \Cref{tab:rank2}.

In case (3), we can take $W=G(2a,1,2)$, which contains $G=G(2a,2,2)$ as a normal reflection subgroup, but does not normalize $N=G(a,d,2)$ for any factor $d$ of $a$.

In case (4), we can take $W$ to be any of $G_7$; $G_{10}$; $G_{11}$; $G_{14}$; or $G_{15}$; since all of these contain $G_5$ as a normal reflection subgroup, but do not normalize $G_4$.

Finally, in case (5) it is not possible to find a complex reflection group $W$ containing $G=G_{28}$ as a normal reflection subgroup, since $G_{28}$ is the only irreducible complex reflection group admitting a non-trivial normal reflection subgroup in rank $r\geq 4$. To see that the two isomorphic copies of $N\simeq G(2,2,4)$ in $G=G_{28}$ are conjugate under an element of the normalizer of $G$ in $\mathrm{GL}_4(\mathbb{C})$, consider the set of reflecting hyperplanes for $G_{28}$, which are the orthogonal complements (with respect to the standard Hermitian inner product in $\CC^4$) of the lines in $\mathcal{L}_1\cup\mathcal{L}_2\cup\mathcal{L}_3$ defined by (cf.~\cite[Section~7.6.2]{lehrer2009unitary}): \begin{gather*}\mathcal{L}_1=\left\{\CC\cdot e_i \ \middle| \ 1\leq i\leq 4\right\};\qquad \mathcal{L}_2=\left\{\CC\cdot\tfrac{1}{2}(e_1 \pm e_2 \pm e_3\pm e_4)\right\}; \quad\text{and} \\ \mathcal{L}_3=\left\{\CC\cdot (e_i\pm e_j) \ \middle| \ 1\leq i<j\leq 4\right\};\end{gather*} where $e_i$ denotes the standard basis vector with $1$ in the $i$-th entry and $0$ elsewhere. There are two orbits of reflecting hyperplanes for $G_{28}=W(F_4)$ (the Weyl group of type $F_4$ and a real reflection group), corresponding to $\mathcal{L}_1\cup\mathcal{L}_2$ (the lines spanned by the short roots) and $\mathcal{L}_3$ (the lines spanned by the long roots). The reflections around the $12$ hyperplanes corresponding to $\mathcal{L}_3$ generate the normal reflection subgroup $N_{(1)}=G(2,2,4)$ acting in its standard reflection representation. The other normal reflection subgroup $N_{(2)}$ is generated by the reflections around the other $12$ hyperplanes corresponding to $\mathcal{L}_1\cup\mathcal{L}_2$. To conclude the proof, note that the real orthogonal matrix \[P:=\frac{1}{\sqrt{2}}\begin{pmatrix} 0 & 1 & 0 & 1\\ 1 & 0 & 1 & 0 \\ 0 & 1 & 0 & -1 \\ -1 & 0 & 1 & 0\end{pmatrix} \] exchanges the two $G_{28}$-orbits of reflecting hyperplanes $\mathcal{L}_1\cup\mathcal{L}_2 \leftrightarrow\mathcal{L}_3$ (identifying $\mathcal{L}_1$ with the set of lines $\CC\cdot(e_i\pm e_j)\in\mathcal{L}_3$ such that $j-i=2$). Hence, $P$ normalizes $G_{28}$ and exchanges $N_{(1)}$ and $N_{(2)}$ under conjugation. \end{proof}

\section{Examples}
\label{sec:examples}
In this section we illustrate our results with examples, beginning with the cyclic groups in \Cref{sec:cyclic_examples}, continuing with the infinite family in \Cref{sec:infinite_examples}, and concluding with a non--well-generated exceptional example in \Cref{sec:exceptional_example}.

\subsection{Cyclic Groups}\label{sec:cyclic_examples}

Consider $G=C_a=\langle c\rangle$, the cyclic group of order $a$, acting on $V=\mathbb{C}$ via $c\mapsto \zeta_a^{-1}$, where $\zeta_a$ is a primitive $a$-th root of unity. Verifying~\Cref{thm:orlik_solomon} and determining the Orlik-Solomon space $\UGs$ from~\Cref{def:OS_space} for $\sigma\in\mathrm{Gal}(\mathbb{Q}(\zeta_a)/\mathbb{Q})$ is already an interesting calculation in this case.

Let $\sigma \in \mathrm{Gal}(\mathbb{Q}(\zeta_a)/\mathbb{Q})$ act as $\sigma:\zeta_a \mapsto \zeta_a^s$, where $s\in\mathbb{N}$ is coprime to $a$. Although it is sufficient to only consider $s\in\{1,\dots,a-1\}$, it will be essential to allow more general positive exponents $s$ in the description of the action of $\sigma$ on different roots of unity when we begin considering (normal reflection) subgroups of $G$ shortly. We compute the identity of \Cref{thm:orlik_solomon} in this example:
\begin{align*}
\sum_{g \in G} \left(\prod_{\lambda_1(g) \neq 1} \frac{1-\lambda_1(g)^\sigma}{1-\lambda_1(g)}\right) q^{\fix_{V} (g)} &= q+\sum_{k=1}^{a-1} \frac{1-\zeta_a^{ks}}{1-\zeta_a^k} = q+\sum_{k=1}^{a-1} \sum_{j=0}^{s-1} \zeta_a^{kj}\\ &= q+ \left(\sum_{j=0}^{s-1}\sum_{k=0}^{a-1} \zeta_a^{kj}\right)-s \\& = q+ \left(\sum_{j=0}^{s-1}\begin{cases} a & \text{if } a\,|\,j \\ 0 & \text{ otherwise} \end{cases}\right)-s \\
&= q+a\left\lceil \frac{s}{a} \right\rceil-s.
\end{align*}

On the other hand, we verify that $e_i^G(\Vs) =a\left\lceil \frac{s}{a} \right\rceil - s$ by exhibiting $x^{a\left\lceil \frac{s}{a}\right\rceil-s} \otimes x^\sigma$ as a basis vector for the dual $\UGsd$ of the Orlik Solomon space $\UGs$, where $x$ and $x^\sigma$ denote basis vectors for $\Vd$ and $\Vsd\!\!$, respectively. More generally, we have the following.

\begin{lemma}\label{lem:cyclic_twisted_exponents}

For $G=C_a=\langle c\rangle$ the cyclic group of order $a$ acting on its reflection representation $V=\mathbb{C}$ by $c\mapsto \zeta_a^{-1}$, and for any $s\in\mathbb{Z}$ (not necessarily coprime to~$a$), the $V^{\otimes s}$-exponent of $G$ is $e_1^G(V^{\otimes  s})=a\left\lceil \frac{s}{a} \right\rceil - s$.
\end{lemma}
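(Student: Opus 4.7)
The plan is to directly identify the (dual) Orlik--Solomon space $(U^G_{V^{\otimes s}})^*=(\mathcal{C}_G\otimes(V^{\otimes s})^*)^G$ from \Cref{def:OS_space} and read off its degree of generation. Let $x$ denote the basis of $V^*$ dual to a chosen basis of $V$; then $c\cdot x=\zeta_a x$. Since $S(V^*)^G=\mathbb{C}[x^a]$, we have $I_G^+=(x^a)$ and the harmonic space is $\mathcal{C}_G=\mathbb{C}[x]/(x^a)=\mathrm{span}_\mathbb{C}\{1,x,\dots,x^{a-1}\}$, with $c$ acting on $x^k$ by $\zeta_a^k$. This is precisely the decomposition of the regular representation of $G=C_a$ into its $a$ distinct characters, each appearing exactly once.

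Next, I would fix a basis vector $y_s$ of the one-dimensional space $(V^{\otimes s})^*$. By definition of $V^{\otimes s}$ (interpreted for negative $s$ as $(V^*)^{\otimes(-s)}$), we have $c\cdot y_s=\zeta_a^s y_s$. An element $x^k\otimes y_s\in\mathcal{C}_G\otimes(V^{\otimes s})^*$ is then $G$-invariant if and only if $\zeta_a^{k+s}=1$, i.e., $k\equiv -s\pmod{a}$. Because $\mathcal{C}_G$ realizes each character of $G$ exactly once, there is a unique $k\in\{0,1,\dots,a-1\}$ satisfying this congruence, and hence $(U^G_{V^{\otimes s}})^*$ is one-dimensional, generated by $x^k\otimes y_s$. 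Thus $e_1^G(V^{\otimes s})=k$.

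Finally, it suffices to verify that this unique $k$ equals $a\lceil s/a\rceil-s$. Writing $s=qa+r$ with $q\in\mathbb{Z}$ and $0\leq r<a$, we have $\lceil s/a\rceil=q$ when $r=0$ and $\lceil s/a\rceil=q+1$ when $0<r<a$, so $a\lceil s/a\rceil-s$ equals $0$ in the first case and $a-r$ in the second. In either case this lies in $\{0,1,\dots,a-1\}$ and is congruent to $-s$ modulo $a$, establishing the formula. The argument is a direct character-counting computation, so there is no real obstacle; the only point requiring a brief comment is making the convention $V^{\otimes s}$ for $s<0$ explicit so that the action $c\cdot y_s=\zeta_a^s y_s$ is unambiguous.
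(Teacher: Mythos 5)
Your proof is correct and is essentially the paper's own argument: identify $\CG=\spn_\CC\{1,x,\dots,x^{a-1}\}$, note that $c$ acts on the basis vector of $(V^{\otimes s})^*$ by $\zeta_a^{s}$, and pick out the unique $k\in\{0,\dots,a-1\}$ with $k\equiv -s\pmod a$, namely $k=a\lceil s/a\rceil-s$. The paper's proof is just a terser version of the same character computation, so there is nothing to add.
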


\begin{proof}
Note that $a\left\lceil \frac{s}{a} \right\rceil - s\in\{0,\dots,a-1\}$ is congruent to $-s$ $(\mod a)$. Letting $x^s$ denote a basis vector for $V^{\otimes s}$, we see that $c(x^s)=\zeta_a^sx^s$ and therefore $x^{a\left\lceil \frac{s}{a} \right\rceil - s}\otimes x^s$ is a basis vector for $(U_{V^{\otimes s}}^G)^*=(\CG\otimes (V^{\otimes s})^*)^G$, since $\CG=\spn_\CC\{1,x,\dots,x^{a-1}\}$ in this case.
\end{proof}

Suppose now that $a=de$ and consider $N=\langle c^e\rangle\simeq C_d$, which is a normal reflection subgroup of $G$ with quotient $H=G/N \simeq C_e$. We denote by $\zeta_d:=\zeta_a^e$ and $\zeta_e:=\zeta_a^d$, so that $N$ acts on $V^*$ via $c^e\mapsto\zeta_d$ and $H=\langle cN\rangle$ acts on $\VNd=\spn_\CC\{x^d\}$ via $(cN)(x^d)=\zeta_e x^d$.

Taking again $s\in\mathbb{N}$ coprime to $a$ and letting $\sigma\in\mathrm{Gal}(\mathbb{Q}(\zeta_a)/\mathbb{Q})$ be given by $\sigma: \zeta_a \mapsto \zeta_a^s$, we have that $\VNs$ is the one-dimensional representation of $G$ defined by $c\mapsto \zeta_a^{-ds}$, or equivalently the one-dimensional representation of $H$ defined by $cN\mapsto \zeta_e^{-s}$. Therefore, by \Cref{lem:cyclic_twisted_exponents}, $e_1^G(\VNs)=a\left\lceil\frac{ds}{a}\right\rceil-ds$ and $e_1^H(\VNs)=e\left\lceil\frac{s}{e}\right\rceil-s$. On the other hand, $(\UNs)^* = \spn_\mathbb{C}\{ x^{d\left\lceil \frac{s}{d}\right\rceil-s} \otimes x^\sigma\}$ by \Cref{lem:cyclic_twisted_exponents}, where again $x$ and $x^\sigma$ denote basis vectors for $\Vd$ and $\Vsd$, respectively. Hence the generator $c$ of $G$ that acts by $\zeta_a^{-1}$ on its reflection representation $V$ now acts by $\zeta_{a}^{-d\left\lceil \frac{s}{d}\right\rceil}$ on $\UNs$, and therefore another application of \Cref{lem:cyclic_twisted_exponents} yields\[e_1^G(\UNs)=a\left\lceil\frac{\left\lceil\frac{s}{d}\right\rceil}{e}\right\rceil-d\left\lceil \frac{s}{d}\right\rceil.\] 

When $s=1$, we see that $\zeta_{a}^{-d\left\lceil \frac{s}{d}\right\rceil} = \zeta_e^{-1}$, so that $\UN \simeq \VN$ is the reflection representation of $H=C_e$ defined by $cN\mapsto \zeta_e^{-1}$. But since $\left\lceil \frac{s}{d}\right\rceil$ is not necessarily coprime to $e$, even when $s\in\{1,\dots,a-1\}$, it is possible for the action of $H$ on $\UNs$ to fail to be faithful---for example, when $s=a-1$ corresponding to $\sigma$ acting by complex conjugation, the generator $c$ of $G$ acts trivially on $\UNsd$ by $\zeta_a^{d\left\lceil \frac{a-1}{d}\right\rceil}=\zeta_e^{e}=1$. Hence we see that, as we mentioned in \Cref{rem:E_aint_UNs} and contrary to what one might have hoped based on the $s=1$ case, in general $\UNs \not\simeq \VN^\sigma$ as $G$-representations.

Using the explicit descriptions of the actions of $N$ and $G$ on $\Vs$, and the actions of $G$ and $H$ on $\UNs$ and $\VNs$ described above, the three equalities in~\Cref{thm:numbers} become the following numerological statements.

\begin{corollary}\label{cor:cyclic_numberology}Let $a=de$ and $s\in\mathbb{N}$ be coprime to $a$. Then
\begin{align*}
\left(d\left\lceil \frac{s}{d}\right\rceil -s\right) +\left(a\left\lceil\frac{\left\lceil\frac{s}{d}\right\rceil}{e}\right\rceil-d\left\lceil \frac{s}{d}\right\rceil\right) &= \left(a \left\lceil \frac{s}{a}\right\rceil - s\right);\\
d\cdot \left(e\left\lceil\frac{s}{e}\right\rceil-s\right)  &= \left(a\left\lceil\frac{ds}{a}\right\rceil-ds\right);\\
d\cdot e &= a.
\end{align*}
\end{corollary}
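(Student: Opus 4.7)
The plan is to specialize Theorem~\ref{thm:numbers} to the setup $G=C_a$, $N=C_d$, $H=C_e$ and to unpack every exponent using the Orlik--Solomon formula from Lemma~\ref{lem:cyclic_twisted_exponents}. The third identity $d\cdot e=a$ is the defining hypothesis, so there is nothing to prove there. The first identity is the rank-one instance of
\[e_1^N(V^\sigma)+e_1^G(\UNs)=e_1^G(V^\sigma),\]
and the second identity is the rank-one instance of
\[d_1^N\cdot e_1^H(\VNs)=e_1^G(\VNs),\]
so the only task is to substitute each closed-form expression into these equalities of Theorem~\ref{thm:numbers}.

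Since every module in sight is one-dimensional, for each one it suffices to read off the scalar by which a chosen generator ($c\in C_a$, $c^e\in C_d$, or $cN\in C_e$) acts and then to invoke Lemma~\ref{lem:cyclic_twisted_exponents} with the appropriate exponent. The module $V^\sigma$ has $c$ acting by $\zeta_a^{-s}$ and $c^e$ acting by $\zeta_d^{-s}$, yielding $e_1^G(V^\sigma)=a\lceil s/a\rceil-s$ and $e_1^N(V^\sigma)=d\lceil s/d\rceil-s$. The module $\UNs$, as spelled out in the text preceding the corollary, has $c$ acting by $\zeta_a^{-d\lceil s/d\rceil}$, so another application of Lemma~\ref{lem:cyclic_twisted_exponents} with parameter $d\lceil s/d\rceil$ in place of $s$ yields
\[e_1^G(\UNs)=a\lceil d\lceil s/d\rceil/a\rceil-d\lceil s/d\rceil=a\lceil\lceil s/d\rceil/e\rceil-d\lceil s/d\rceil,\]
using $a=de$. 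Finally, $\VNs$ has $c$ acting by $\zeta_a^{-ds}$ and $cN$ acting by $\zeta_e^{-s}$, yielding $e_1^G(\VNs)=a\lceil ds/a\rceil-ds$ and $e_1^H(\VNs)=e\lceil s/e\rceil-s$.

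Substituting these five expressions into the two equalities of Theorem~\ref{thm:numbers} reproduces the first and second identities of the corollary verbatim. No genuine obstacle arises; the one place requiring attention is the scalar action on $\UNs$, where one must keep track of both the Galois twist on the factor $x^\sigma$ and the dualization when passing from the explicit basis of $\UNsd$ to the generator of $\UNs$ itself. As an independent sanity check, after cancellation the first identity reduces to the elementary nested-ceiling identity $\lceil\lceil s/d\rceil/e\rceil=\lceil s/(de)\rceil=\lceil s/a\rceil$, and the second to $\lceil ds/a\rceil=\lceil s/e\rceil$, both of which are standard for positive integers.
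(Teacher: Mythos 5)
Your proposal is correct and matches the paper's approach essentially verbatim: both specialize Theorem~\ref{thm:numbers} to $G=C_a\triangleright N=C_d$ with quotient $H=C_e$, read off the scalar actions of $c$, $c^e$, and $cN$ on $V^\sigma$, $\UNs$, and $\VNs$, and feed them through Lemma~\ref{lem:cyclic_twisted_exponents} to get the five explicit ceiling expressions. The paper likewise reduces the first identity to the nested-ceiling fact $\lceil\lceil s/d\rceil/e\rceil=\lceil s/(de)\rceil$ and leaves it as a standard remark, so nothing is missing.
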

The first (and only non-trivial) equality in this case is equivalent to the identity $\left\lceil\frac{\left\lceil\frac{s}{d}\right\rceil}{e}\right\rceil=\left\lceil \frac{s}{de}\right\rceil$, which holds more generally for $e \in \NN$ and $s,d \in \RR$.

Our \Cref{thm:main_theorem} in this situation states that the following expressions are equal.
\begin{align*}\sum_{g \in G} \left(\prod_{\lambda_1(g)\neq 1}\!\!\!\frac{1-\lambda_1(g)^s}{1-\lambda_1(g)}\right)\! q^{\fix_V (g)}t^{\fix_E (g)} &= qt+\left(\sum_{j=1}^{d-1}\frac{1-\zeta_d^{js}}{1-\zeta_d^{j}}\right)\!t+\left(\sum_{\substack{ k=1\\ e\, \nmid \, k}}^{a-1} \frac{1-\zeta_a^{ks}}{1-\zeta_a^k}\right)\\
\left(qt+e_1^N(\Vs)t+e_1^G(\UNs)\right)&= qt+\left(d \left\lceil\frac{s}{d}\right\rceil -s\right)t+\left(a\left\lceil\frac{\left\lceil\frac{s}{d}\right\rceil}{e}\right\rceil-d\left\lceil \frac{s}{d}\right\rceil\right)
.\end{align*}
The equality of the coefficients of $t$ \[\sum_{j=1}^{d-1}\frac{1-\zeta_d^{js}}{1-\zeta_d^{j}}=d\left\lceil\frac{s}{d}\right\rceil-s(=e_1^N(\Vs))\] was already verified above with $a$ in place of $d$. To verify the equality of constant terms \[\sum_{\substack{ k=1\\ e\, \nmid \, k}}^{a-1} \frac{1-\zeta_a^{ks}}{1-\zeta_a^k}=a\left\lceil\frac{\left\lceil\frac{s}{d}\right\rceil}{e}\right\rceil-d\left\lceil \frac{s}{d}\right\rceil,\] one could proceed for example by noting that the same arguments show that the left-hand side is equal to \[\left(\sum_{k=1}^{a-1}\frac{1-\zeta_a^{ks}}{1-\zeta_a^{k}}\right)-\left(\sum_{j=1}^{d-1}\frac{1-\zeta_d^{js}}{1-\zeta_d^{j}}\right)=\left(a\left\lceil\frac{s}{a}\right\rceil-s \right)-\left(d\left\lceil\frac{s}{d}\right\rceil-s\right)=a\left\lceil\frac{s}{a}\right\rceil -d\left\lceil\frac{s}{d}\right\rceil\] and then appealing either to \Cref{cor:cyclic_numberology} or to more general properties of ceilings to obtain the equality $\left\lceil\frac{\left\lceil\frac{s}{d}\right\rceil}{e}\right\rceil=\left\lceil \frac{s}{a}\right\rceil$.

We conclude our discussion of the cyclic case with a concrete illustration of the subtlety involved in proving (in \Cref{cor:sum_side_specialization}) that the sum side of \Cref{thm:main_theorem} provides the correct contribution coset-by-coset, but not term-by-term as in the proof of \Cref{thm:orlik_solomon}.

To compare the two situations, we compute the trace of $ng=c^{ek}\cdot c^j\in Ng$ (for $0 \leq k < d$ and $0 \leq j < e$) acting on $S(V^*) \otimes \bigwedge (V^\sigma)^*$ and $S(V^*) \otimes \bigwedge (\UNs)^*$:
\begin{align*}
\sum_{\ell,p\geq 0}\mathrm{tr}\left((ng) | S(V^*)_\ell \otimes {\textstyle \bigwedge^p}(V^\sigma)^*\right)x^\ell u^p &=\frac{1+u (c^{ek+j})|_{(V^\sigma)^*}}{1-x(c^{ek+j})|_{(V)^*}}\Bigg|_{\substack{u=q(1-x)-1\\ x \to 1}}\\
&= \frac{1+u \zeta_a^{s(ek+j)}}{1-x\zeta_a^{ek+j}}\Bigg|_{\substack{u=q(1-x)-1\\ x \to 1}}\\
&=\frac{1- \zeta_a^{s(ek+j)}}{1-x\zeta_a^{ek+j}}+\frac{q(1-x) \zeta_a^{s(ek+j)}}{1-x\zeta_a^{ek+j}}\Bigg|_{x \to 1}\\
&=\left(\prod_{\lambda_1(ng)\neq 1} \frac{1-\lambda_1(ng)^\sigma}{1-\lambda_1(ng)} \right)q^{\fix_V(ng)},
\end{align*}
since the term $\frac{q(1-x) \zeta_a^{s(ek+j)}}{1-x\zeta_a^{ek+j}}$ vanishes in the limit $x \to 1$ for all elements of $G$ except the identity.  For example, for $a=6,d=2,e=3,$ and $s=5$, summing over all elements of $C_6=\langle c \rangle$ and then specializing $u\mapsto q(1-x)-1$ and taking the limit as $x \to 1$ gives
\begin{align*}
\begin{array}{cc|cc|cc}
\multicolumn{2}{c}{N}& \multicolumn{2}{c}{cN} & \multicolumn{2}{c}{c^2N}\\
\mathrm{id} & c^3 & c & c^4 & c^2 & c^5\\ \hline
\multicolumn{1}{c@{\hspace*{\tabcolsep}\makebox[0pt]{+}}}{q} & \multicolumn{1}{c@{\hspace*{\tabcolsep}\makebox[0pt]{+}}}{\frac{1-\zeta_6^3}{1-\zeta_6^3}} & \multicolumn{1}{c@{\hspace*{\tabcolsep}\makebox[0pt]{+}}}{\frac{1-\zeta_6^5}{1-\zeta_6}} &\multicolumn{1}{c@{\hspace*{\tabcolsep}\makebox[0pt]{+}}}{\frac{1-\zeta_6^2}{1-\zeta_6^4}} & \multicolumn{1}{c@{\hspace*{\tabcolsep}\makebox[0pt]{+}}}{\frac{1-\zeta_6^4}{1-\zeta_6^2}}  & \frac{1-\zeta_6^1}{1-\zeta_6^5}\\
\multicolumn{2}{c|}{q+1} & \multicolumn{2}{c|}{0} & \multicolumn{2}{c}{0}
\end{array}=q+1,
\end{align*}
so that each element contributes the ``correct amount'' specified by the sum side of~\Cref{thm:orlik_solomon}.

On the other hand, computing the sum side of our refinement~\Cref{thm:main_theorem} gives the following, where we have written $b:=e_1^N(\Vs)=d\left\lceil\frac{s}{d}\right\rceil -s$ for legibility.
\begin{align*}
\sum_{\ell,p\geq 0}\!\!\mathrm{tr}\left((ng) | S(V^*)_\ell \otimes \textstyle\bigwedge^p\UNsd\right)x^\ell y^{bp}u^p &=
\frac{1+u y^b (c^{ek+j})|_{\UNsd_{\!\!\!b}}}{1-x(c^{ek+j})|_{(V)^*}}\Bigg|_{\substack{y=x^t \\ u=qt(1-x)-1\\ x \to 1}}\\
&= \frac{1+ux^{tb} \zeta_e^{\left\lceil \frac{s}{d}\right\rceil(ek+j)}}{1-x\zeta_a^{ek+j}}\Bigg|_{\substack{u=qt(1-x)-1\\ x \to 1}}\\
&= \frac{1- x^{tb} \zeta_e^{j\left\lceil \frac{s}{d}\right\rceil}}{1-x\zeta_a^{ek+j}}+\frac{x^{tb} qt (1-x) \zeta_e^{j\left\lceil \frac{s}{d}\right\rceil}}{1-x\zeta_a^{ek+j}}\Bigg|_{x \to 1}\\
&=
\begin{cases}
qt+bt& \text{if } k=j=0\\
\frac{1- \zeta_e^{j\left\lceil \frac{s}{d}\right\rceil}}{1-\zeta_a^{ek+j}} & \text{ otherwise}.
\end{cases}
\end{align*}

\noindent Continuing the example with $a=6,d=2,e=3,$ and $s=5$, since $\lceil \frac{s}{d}\rceil=e=3$, after specializing $y\mapsto x^t$, $u\mapsto qt(1-x)-1$, and taking the limit as $x\to 1$, every term except for the identity vanishes---in particular, each element of $G$ \emph{does not} contribute the ``correct amount'' specified by the sum side of~\Cref{thm:main_theorem}:
\begin{align*}
\begin{array}{cc|cc|cc}
\multicolumn{2}{c}{N}& \multicolumn{2}{c}{cN} & \multicolumn{2}{c}{c^2N}\\
\mathrm{id} & c^3 & c & c^4 & c^2 & c^5\\ \hline
\multicolumn{1}{c@{\hspace*{\tabcolsep}\makebox[0pt]{+}}}{(qt+t)} & \multicolumn{1}{c@{\hspace*{\tabcolsep}\makebox[0pt]{+}}}{0} & \multicolumn{1}{c@{\hspace*{\tabcolsep}\makebox[0pt]{+}}}{0} &\multicolumn{1}{c@{\hspace*{\tabcolsep}\makebox[0pt]{+}}}{0} & \multicolumn{1}{c@{\hspace*{\tabcolsep}\makebox[0pt]{+}}}{0}  & 0\\
\multicolumn{2}{c|}{qt+t} & \multicolumn{2}{c|}{0} & \multicolumn{2}{c}{0}
\end{array}=qt+t.
\end{align*}
Here the only coset that provides a non-trivial contribution to the sum side of \Cref{thm:main_theorem} is the trivial coset $N$, as predicted by the computation of the sum side of \Cref{thm:orlik_solomon}, but this non-trivial contribution of $qt+t$ for the whole coset $N$ is concentrated on the identity element $c^0\in N$ alone, which according to the sum side of \Cref{thm:main_theorem} should have only contributed $qt$, whereas the non-trivial element $c^3\in N$ did not provide the correct contribution of $\frac{1-\lambda_1(c^3)^\sigma}{1-\lambda_1(c^3)}q^{\fix_V(c^3)}t^{\fix_\VN(c^3)}=t$ specified by the sum side of \Cref{thm:main_theorem}.

\subsection{The Infinite Family~\texorpdfstring{$G(ab,b,r)$}{G(ab,b,r)}}\label{sec:infinite_examples}

We begin by defining an ad-hoc operation on $G$-modules for $G=G(ab,b,r)$ (whose definition was given in \Cref{sec:classification}) that will allow us to succintly identify the Orlik-Solomon spaces $\UNs$ of \Cref{def:OS_space} for the two kinds of normal reflection subgroups $N$ of $G$ listed in \Cref{thm:classification} that occur in all ranks $r\geq 2$.

\begin{definition}\label{def:fake_tensor} For $n\in\ZZ$, let $\mu_n:G\rightarrow G$ be the group endomorphism obtained by raising each non-zero matrix entry to the $n$-th power. Given any representation $\rho:G\rightarrow\mathrm{GL}(W)$, we define the \defn{fake tensor power} $W^{\boxtimes n}$ as the representation $\rho\circ\mu_n:G\rightarrow\mathrm{GL}(W)$.
\end{definition}

\begin{remark}
Note that in general the ``fake $n$-th power map'' $\mu_n$ used in \Cref{def:fake_tensor} will be neither injective nor surjective when $\mathrm{gcd}(ab,n)\neq 1$. In the case where $\mathrm{gcd}(ab,s)=1$, note that $V^{\boxtimes s}\simeq V^\sigma$, the Galois twist corresponding to $\sigma:\zeta_{ab}\mapsto\zeta_{ab}^s$.

Although the fake tensor power operation could be expressed in terms of more systematic constructions (interpreting $G(ab,b,r)$ as an index-$b$ subgroup of the wreath product $C_{ab}\wr S_r$), we have preferred our ad hoc definition for its simplicity and concreteness.\end{remark}

In the following result, we identify the $G$-module $\UNs$ when $N=C_d^r$ as in \Cref{thm:classification}($r$.a) for $r\geq 2$.

\begin{proposition}\label{prop:infinite_normal_2}
Let $a=de$ and $N=C_d^r\triangleleft G(ab,b,r)=G$, and fix $\sigma: \zeta_{ab} \to \zeta_{ab}^s$ for $1 \leq s < ab$ with $\gcd(s,ab)=1$.   Then $H\simeq G(eb,b,r)$ and $\UNs\simeq\VN^{\boxtimes \left\lceil \frac{s}{d}\right\rceil }$ as $G$-modules.
\end{proposition}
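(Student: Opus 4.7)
My approach is to compute the Orlik-Solomon space $\UNsd = (\CN \otimes \Vsd)^N$ directly using the explicit presentation of $N = C_d^r \subset G(ab,b,r)$ as diagonal matrices with entries in the $d$-th roots of unity, and then verify that the resulting $G$-module structure matches $\VN^{\boxtimes \lceil s/d\rceil}$.

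Let $e_1,\ldots,e_r$ be the standard basis of $V$ with dual basis $x_1,\ldots,x_r$ of $V^*$, so that a monomial matrix $g\in G$ with nonzero entries $\lambda_i$ and underlying permutation $\pi$ acts on $V^*$ by $g\cdot x_i=\lambda_i^{-1}x_{\pi(i)}$ and on $\Vsd$ (with basis $y_1,\ldots,y_r$) by $g\cdot y_i=\lambda_i^{-s}y_{\pi(i)}$. Since $N$ acts on each $x_i$ independently by scaling by $d$-th roots of unity, we have $S(V^*)^N=\CC[x_1^d,\dots,x_r^d]$, so $\CN$ admits the monomial basis $\{x_1^{b_1}\cdots x_r^{b_r}:0\le b_i<d\}$.

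A short character computation with diagonal elements of $N$ then singles out the basis $\{x_j^{\ell}\otimes y_j:1\le j\le r\}$ of $\UNsd$, where $\ell:=d\lceil s/d\rceil-s$ is the unique representative of $-s\pmod d$ in $\{0,\dots,d-1\}$ (well-defined since $\gcd(s,d)=1$ whenever $d>1$). Computing the $G$-action on this basis yields $g\cdot(x_j^\ell\otimes y_j)=\lambda_j^{-(\ell+s)}(x_{\pi(j)}^\ell\otimes y_{\pi(j)})=\lambda_j^{-d\lceil s/d\rceil}(x_{\pi(j)}^\ell\otimes y_{\pi(j)})$, and so the dual basis $\{f_j\}$ of $\UNs$ transforms by $g\cdot f_i=\lambda_i^{d\lceil s/d\rceil}f_{\pi(i)}$.

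On the other hand, the basis $\{\bar e_i\}$ of $\VN$ dual to $\{x_i^d\}$ satisfies $g\cdot\bar e_i=\lambda_i^d\bar e_{\pi(i)}$, which both identifies $H=G/N$ with $G(eb,b,r)$ (cf.~\Cref{thm:classification}) and shows $\VN\simeq V^{\boxtimes d}$ as $G$-modules. Applying $\mu_{\lceil s/d\rceil}$ from \Cref{def:fake_tensor} then produces $\VN^{\boxtimes\lceil s/d\rceil}$, on which $g\cdot\bar e_i=\lambda_i^{d\lceil s/d\rceil}\bar e_{\pi(i)}$; the map $f_i\mapsto\bar e_i$ is therefore the required $G$-module isomorphism. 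The main obstacle is really just bookkeeping with the dualizations and with the arithmetic of $\ell$, $s$, and $\lceil s/d\rceil$; no conceptual difficulty arises.
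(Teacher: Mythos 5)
Your proof is correct and follows essentially the same route as the paper's: both identify the basis $x_j^{d\left\lceil s/d\right\rceil-s}\otimes x_j^\sigma$ of $\UNsd$ (the paper imports it from the cyclic-group computation, you rederive it by averaging over the diagonal subgroup) and then read off the $G$-action on it and on $\VNd=\spn_\CC\{x_1^d,\dots,x_r^d\}$ to conclude $\UNs\simeq V^{\boxtimes d\left\lceil s/d\right\rceil}\simeq \VN^{\boxtimes\left\lceil s/d\right\rceil}$. The bookkeeping with duals and with $\ell=d\left\lceil s/d\right\rceil-s\equiv -s\pmod d$ checks out.
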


\begin{proof} 
For this choice of $N$, the fundamental $N$-invariants are $\n_i = x_i^d$ for $1 \leq i \leq r$, and we obtain the basis $u_i^N= x_i^{d\left\lceil\frac{s}{d}\right\rceil-s} \otimes x_i^\sigma$ for $\UNsd$ as in \Cref{sec:cyclic_examples}. 

Fix $g\in G$ and $\ell\in\{1,\dots,r\}$, and suppose that $g(x_\ell)=\zeta_{ab}^k x_j$ for some $0\leq k<ab$ and $j\in\{1,\dots,r\}$. Then from the explicit descriptions of $\VNd$ and $\UNsd$ above we see that $g(\n_\ell)=\zeta_{ab}^{kd}\n_j$ and $g(u_{\ell}^N)=\zeta_{ab}^{kd\left\lceil\frac{s}{d}\right\rceil}u_j^N$. It follows that $\VN\simeq V^{\boxtimes d}$ and $\UNs\simeq V^{\boxtimes d\left\lceil\frac{s}{d}\right\rceil }$, and therefore $\UNs\simeq \VN^{\boxtimes \left\lceil\frac{s}{d}\right\rceil}$, as claimed.\end{proof}

As a special case of \Cref{prop:infinite_normal_2}, $\UNs\simeq \VN$ when $N=C_d^r$ and $s<d$.

\medskip
Following~\cite[Proposition~14.1]{reiner2019invariant}, a simple choice of invariant polynomials for $G(ab,b,r)$ is the set 

\begin{equation} \label{eq:infinite_invariants}
G_i=\sum_{j=1}^r x_j^{abi} \quad\text{for}\ 1 \leq i < r \qquad\text{and}\qquad G_r = (x_1 \cdots x_r)^a. \end{equation}

\noindent In the following result we determine the Orlik-Solomon space $\UGs$ of \Cref{def:OS_space} up to graded cryptomorphism.

\begin{theorem} \label{thm:infinite_exponents} Let $G=G(ab,b,r)$ and $\sigma \in \mathrm{Gal}(\mathbb{Q}(\zeta_{ab})/\mathbb{Q})$ be given by $\sigma(\zeta_{ab})=\zeta_{ab}^s$ for $1 \leq s < ab$ with $\gcd(s,ab)=1$. Let $\UGstd:=\spn_\CC\{\tilde{u}_1^G,\dots,\tilde{u}_r^G\}$, where
\[\tilde{u}_i^G = \sum_{j=1}^r x_j^{abi-s}\otimes x_j^\sigma \quad \text{for}\ \ 1 \leq i < r \qquad \text{and}\qquad \tilde{u}_r^G = \sum_{j=1}^r (x_1 \cdots x_r)^{\left\lceil \frac{s}{a}\right\rceil a} x_j^{-s} \otimes x_j^\sigma.\]  
Let $\eta_G:S(V^*)\rightarrow\CG$ denote the natural projection from $S(V^*)$ onto its $G$-stable direct summand $\CG$. Then $\eta_G\otimes 1:\UGstd\rightarrow\UGsd$ is an isomorphism of graded vector spaces, and therefore the $\Vs$-exponents of $G(ab,b,r)$ are \[\left\{ab-s,2ab-s,\ldots,(r{-}1)ab-s,\left\lceil \frac{s}{a}\right\rceil ar-s\right\}.\]
\end{theorem}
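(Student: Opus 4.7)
The plan is to apply \Cref{lem:fake_OS_space} with the normal reflection subgroup chosen to be $G$ itself. This reduces the theorem to two verifications: (a) each $\tilde{u}_i^G$ is $G$-invariant, i.e., lies in $(S(V^*)\otimes\Vsd)^G$, and (b) the collection $\{\tilde{u}_1^G,\ldots,\tilde{u}_r^G\}$ is a homogeneous $S(V^*)^G$-basis of $(S(V^*)\otimes\Vsd)^G$ as a free module. Given (a) and (b), the lemma directly yields the graded isomorphism $\eta_G\otimes 1:\UGstd\to \UGsd$, and the claimed $\Vs$-exponents are read off as the prescribed degrees $\deg(\tilde{u}_i^G)$.

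For (a), I would exploit the explicit monomial structure of $G=G(ab,b,r)$: any $g\in G$ acts by $x_\ell\mapsto\zeta_\ell x_{\pi(\ell)}$ for some $\pi\in\mathfrak{S}_r$ and some $ab$-th roots of unity $\zeta_\ell$ whose product satisfies $(\prod_\ell\zeta_\ell)^a=1$. For $1\leq i<r$, the summand $x_\ell^{abi-s}\otimes x_\ell^\sigma$ transforms to $\zeta_\ell^{abi-s}\cdot\zeta_\ell^s\cdot x_{\pi(\ell)}^{abi-s}\otimes x_{\pi(\ell)}^\sigma=x_{\pi(\ell)}^{abi-s}\otimes x_{\pi(\ell)}^\sigma$ using $\zeta_\ell^{ab}=1$, so the sum over $\ell$ is merely permuted. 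For $i=r$, the common factor $(x_1\cdots x_r)^{\lceil s/a\rceil a}$ is $G$-fixed since $(\prod_\ell\zeta_\ell)^a=1$, and the residual factor $x_\ell^{-s}\otimes x_\ell^\sigma$ is permuted by the same cancellation.

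For (b), I would compute the coefficient matrix $P=(p_{ij})$, where $\tilde{u}_i^G=\sum_j p_{ij}\otimes x_j^\sigma$. Formally factoring $x_j^{-s}$ from column $j$ leaves rows $(x_1^{abi},\ldots,x_r^{abi})$ for $i<r$ and a row with every entry $(x_1\cdots x_r)^{\lceil s/a\rceil a}$ in position $r$; expanding along this bottom row and recognizing a Vandermonde in the $x_j^{ab}$ produces
\[\det(P)=(-1)^{r-1}(x_1\cdots x_r)^{\lceil s/a\rceil a-s}\prod_{j<k}(x_k^{ab}-x_j^{ab})\neq 0.\]
Since $\Vs$ is amenable by \cite[Theorem~2.13]{orlik1980unitary}, \Cref{thm:OS_amenable} ensures $(S(V^*)\otimes\Vsd)^G$ is free of rank $r$ over $S(V^*)^G$. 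Letting $Q$ be the coefficient matrix of any standard basis and writing $P=FQ$ with $F\in M_r(S(V^*)^G)$, the identity $\det P=\det F\cdot\det Q$ combined with $\deg\det P=\sum_i\deg\tilde{u}_i^G=\sum_i e_i^G(\Vs)=\deg\det Q$ forces $\det F\in\CC^\times$, hence $F\in\mathrm{GL}_r(S(V^*)^G)$ and $\{\tilde u_i^G\}$ is a free basis.

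The main obstacle is the final degree identity $\sum_i\deg\tilde u_i^G=\sum_i e_i^G(\Vs)$. I would verify it via a Molien Poincar\'e series computation of $(S(V^*)\otimes\Vsd)^G$, specialized using the known degrees $\{ab,2ab,\ldots,(r-1)ab,ar\}$ of $G(ab,b,r)$, to directly read off $\sum_i x^{e_i^G(\Vs)}$; equivalently, one can evaluate the sum side of \Cref{thm:orlik_solomon} over monomial matrices of $G(ab,b,r)$ and match against $\prod_i(q+e_i^G(\Vs))$ with the exponents asserted in the theorem. Either route amounts to a bookkeeping exercise once (a) and the determinant in (b) are in place.
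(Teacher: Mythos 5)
Your proposal is correct and follows the same skeleton as the paper's proof: reduce via \Cref{lem:fake_OS_space} applied with $N=G$, check $G$-invariance of the $\tilde{u}_i^G$ from the monomial-matrix description of $G(ab,b,r)$, and then show the transition matrix $F$ from a genuine homogeneous basis of $(\CG\otimes\Vsd)^G$ to the $\tilde{u}_i^G$ has constant nonzero determinant by a degree comparison of the two coefficient-matrix determinants. The one genuine divergence is in how the load-bearing identity $\sum_i\deg\tilde{u}_i^G=\sum_i e_i^G(\Vs)$ is established. The paper invokes Gutkin's theorem \cite[Theorem 10.13]{lehrer2009unitary} to identify $\det[a_{ij}^G]$, up to a scalar, as the explicit product $(x_1\cdots x_r)^{\lceil s/a\rceil a-s}\prod_{i<j}(x_i^{ab}-x_j^{ab})$, computing the local exponent $C(H,\Vs)$ at each orbit of hyperplanes; comparing degrees with your Vandermonde evaluation of $\det[\tilde{a}_{ij}^G]$ then finishes the argument. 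You instead propose to compute $\sum_i e_i^G(\Vs)$ by a Molien-type computation, e.g.\ as the fake degree of $\bigwedge^r\Vs$ (using amenability of $\Vs$) or as the coefficient of $q^{r-1}$ on the sum side of \Cref{thm:orlik_solomon}, which reduces to $\sum_{\text{reflections }g}\frac{1-\lambda(g)^\sigma}{1-\lambda(g)}$; the order-two reflections contribute $ab\binom{r}{2}$ and the diagonal reflections contribute $r(a\lceil s/a\rceil-s)$ by the cyclic-group calculation, matching $\deg\det[\tilde{a}_{ij}^G]$. Both routes ultimately rest on the same local data at each reflecting hyperplane, so this is a legitimate and roughly equally costly substitute for Gutkin's theorem; what Gutkin buys the paper is the actual semi-invariant polynomial rather than just its degree, while your route has the minor advantage that the explicit Vandermonde makes $\det[\tilde{a}_{ij}^G]\neq 0$ (hence $\det F\neq 0$) immediate, a point the paper handles separately via $\CC$-linear independence of the $\tilde{u}_i^G$.
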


\begin{proof}
By \Cref{lem:fake_OS_space} (applied in the special case where $N=G$), it suffices to show that the $\tilde{u}_i^G$ form a basis for $(S(V^*)\otimes\Vsd)^G$ as a free $S(V^*)^G$-module.

Since the group $G(ab,b,r)$ acts on $V^*$ by permutation of the $x_i$ and multiplication by $(ab)$-th roots of unity, it is clear that the $\tilde{u}_i^G\in S(V^*)\otimes\Vsd$ are $G$-invariant. Let $u_i^G=\sum_{j=0}^ra_{ij}^G\otimes x_j^\sigma$ be a homogeneous basis for $\UGsd$, where $a_{ij}^G\in\CG$ and $\mathrm{deg}(a_{ij}^G)=e_i^G(\Vs)$ whenever $a_{ij}^G\neq 0$. Since $(S(V^*)\otimes\Vsd)^G\simeq S(V^*)^G\otimes\UGsd$, there exists a matrix $[p_{ij}^G]\in\mathrm{Mat}_{r\times r}(S(V^*)^G)$ such that $[p_{ij}^G]\cdot[a_{ij}^G]=[\tilde{a}_{ij}^G]$, where \[\tilde{a}_{ij}^G = \begin{cases} x_j^{abi-s} & \text{ for  } 1 \leq i <r \\ (x_1\cdots x_r)^{\left\lceil \frac{s}{a}\right\rceil a} x_j^{-s} & \text{ for  } i=r,\end{cases}\] or equivalently such that $\tilde{u}^G_i=\sum_{j=1}^rp_{ij}^G\cdot u_j^G$. It is clear from the form of the $\tilde{u}_i^G$ that they are $\CC$-linearly independent, which implies that $\mathrm{det}(p_{ij}^G)\neq 0$.

We claim that \begin{equation}\label{eq:gutkin}\mathrm{det}(a_{ij}^G)=c\cdot (x_1\cdots x_r)^{\left\lceil \frac{s}{a}\right\rceil a-s} \prod_{1\leq i < j \leq r} (x_i^{ab}-x_j^{ab})\end{equation} for some $0\neq c\in\CC$. By Gutkin's Theorem~\cite[Theorem 10.13]{lehrer2009unitary},\[\mathrm{det}(a_{ij}^G)=c\cdot \prod_{H\in\mathcal{R}_G} L_H^{C(H,\Vs)}\] for some $0\neq c\in\CC$, where $\mathcal{R}_G$ denotes the set of reflecting hyperplanes for $G$, $L_H\in V^*$ denotes a linear form defining $H$, and $C(H,\Vs)$ is defined as follows. Denoting by $\langle r_H\rangle = G_H<G$ the cyclic subgroup of $G$ that stabilizes $H$ pointwise, decompose $\Vsd\simeq \bigoplus_{i=1}^r\lambda^{\otimes k_i}$ as a $G_H$-module with $0\leq k_i<|G_H|$ for $1\leq i\leq r$, where $\lambda$ denotes the standard representation of $G_H$ on the $G_H$-stable complement of $H$ in $V$, and define $C(H,\Vs):=\sum_{i=1}^r k_i$. When $L_H=x_i-\zeta_{ab}^\ell x_j$ with $i\neq j$ and $0\leq\ell<ab-1$, the cyclic generator $r_H$ has order $2$ and $\Vs\simeq\lambda \oplus\CC^{\oplus(r-1)}$ as a $G_H$-module, and therefore $C(H,\Vs)=1$ in this case. If $a>1$, then we have the additional reflecting hyperplanes defined by $L_H=x_i$; in this case, the cyclic generator $r_H$ has order $a$ and $\Vsd\simeq \lambda^{\otimes\left(\left\lceil\frac{s}{a}\right\rceil a -s\right)}\oplus\CC^{\oplus(r-1)}$ as a $G_H$-module (cf.~\Cref{lem:cyclic_twisted_exponents}), and therefore $C(H,\Vs)=\left\lceil\frac{s}{a}\right\rceil a -s$. This concludes the proof of \Cref{eq:gutkin}.

We see by direct inspection that \[\mathrm{deg}(\mathrm{det}(a_{ij}^G))=ab\cdot\binom{r}{2}+r\cdot\left(\left\lceil\frac{s}{a}\right\rceil a-s\right)=\mathrm{deg}(\mathrm{det}(\tilde{a}_{ij}^G)),\] which implies that $\mathrm{deg}(\mathrm{det}(p_{ij}^G))=0$, and since $\mathrm{det}(p_{ij}^G)\neq 0$ as we had already seen, it follows that $[p_{ij}^G]\in\mathrm{GL}_r(S(V^*)^G)$, as we wanted to show.\end{proof}

\begin{remark}Note that when $s=ab-1$, so that $\sigma$ acts by complex conjugation, the basis $\tilde{u_i}^G$ for $\UGstd$ in \Cref{thm:infinite_exponents} agrees with the one computed in \cite[Section~6]{orlik1980unitary}.\end{remark}

In the following result, we identify the $G$-module $\UNs$ when $N=G(ab,db,r)$ as in \Cref{thm:classification}($r$.b) for $r\geq 2$.

\begin{proposition} \label{prop:infinite_normal_1}
Let $a=de$ and $N=G(ab,db,r)\trianglelefteq G(ab,b,r)=G$, and fix $\sigma: \zeta_{ab} \to \zeta_{ab}^s$ for $1 \leq s < ab$ with $\gcd(s,ab)=1$. Then $H\simeq C_d$ and $\UNs\simeq \VN^{\boxtimes \left\lceil \frac{s}{e}\right\rceil}$ as $G$-modules. 
\end{proposition}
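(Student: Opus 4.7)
The plan is to apply \Cref{thm:infinite_exponents} with $N = G(ab, db, r)$ in the role of the acting group, and then verify by direct computation that the resulting $N$-semi-invariant basis is $G$-stable (and not merely $N$-stable), so that \Cref{lem:fake_OS_space} can be invoked with $G$-equivariance.

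First, I would pin down the $G$-action on $\VN$. Since $N = G(ab, db, r)$, the fundamental $N$-invariants are $\tilde{\n}_i = \sum_{j=1}^r x_j^{abi}$ for $1 \le i < r$ together with $\tilde{\n}_r = (x_1 \cdots x_r)^e$ (using $a = de$). For any $g \in G$ acting as $x_i \mapsto \zeta_{ab}^{k_i} x_{\pi(i)}$ with $\sum_i k_i = bc$ (the defining constraint of $G(ab,b,r)$), the first $r-1$ invariants are $G$-fixed while $g(\tilde{\n}_r) = \zeta_{ab}^{e\sum_i k_i}\tilde{\n}_r = \zeta_d^c\tilde{\n}_r$. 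Hence $\VNd$ splits as an $(r-1)$-dimensional trivial piece plus a one-dimensional piece on which $H \simeq C_d$ acts faithfully.

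Since $N = G(a'b', b', r)$ with $(a', b') = (e, db)$ (so $a'b' = ab$), applying \Cref{thm:infinite_exponents} to $N$ produces a free $S(V^*)^N$-module basis for $(S(V^*) \otimes \Vsd)^N$:
\begin{align*}
\tilde{u}_i^N &= \sum_{j=1}^r x_j^{abi - s} \otimes x_j^\sigma \quad (1 \le i < r), \\
\tilde{u}_r^N &= \sum_{j=1}^r (x_1 \cdots x_r)^{\lceil s/e \rceil e}\, x_j^{-s} \otimes x_j^\sigma.
\end{align*}
The main obstacle is then to check that $\UNstd := \spn_\CC\{\tilde{u}_1^N, \ldots, \tilde{u}_r^N\}$ is $G$-stable. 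The computation in the proof of \Cref{thm:infinite_exponents} directly gives $g(\tilde{u}_i^N) = \tilde{u}_i^N$ for $1 \le i < r$; for the top generator, the same calculation, carefully tracking the $g$-transformation of the prefactor $(x_1 \cdots x_r)^{\lceil s/e \rceil e}$, yields
\[
g(\tilde{u}_r^N) = \zeta_{ab}^{\lceil s/e \rceil e\sum_i k_i}\, \tilde{u}_r^N = \zeta_d^{\lceil s/e \rceil c}\, \tilde{u}_r^N,
\]
so $\tilde{u}_r^N$ is an $H$-eigenvector and $\UNstd$ is $G$-stable.

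With $G$-stability in hand, \Cref{lem:fake_OS_space} produces a graded $G$-module isomorphism $\UNsd \simeq \UNstd$, and comparing eigenvalues with the first paragraph shows that $G$ acts on $\UNsd$ by the same diagonal matrices as on $\VNd$, except that the nontrivial eigenvalue $\zeta_d^c$ has been replaced by its $\lceil s/e \rceil$-th power $\zeta_d^{c \lceil s/e \rceil}$. Since the fake tensor power $\VN^{\boxtimes n}$ is defined by raising the nonzero entries of $g \in G$ to the $n$-th power (which scales $\zeta_d^c$ to $\zeta_d^{nc}$), this identifies $\UNsd \simeq (\VNd)^{\boxtimes \lceil s/e \rceil}$, and dualizing via $(W^*)^{\boxtimes n} \simeq (W^{\boxtimes n})^*$ yields $\UNs \simeq \VN^{\boxtimes \lceil s/e \rceil}$ as $G$-modules. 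The claim $H \simeq C_d$ is already recorded in \Cref{thm:classification}($r$.b).
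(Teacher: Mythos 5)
Your proposal is correct and follows essentially the same route as the paper: identify the $H\simeq C_d$ action on $\VNd$ via the fundamental $N$-invariants, apply \Cref{thm:infinite_exponents} to $N=G(ab,db,r)$ viewed as $G(a'b',b',r)$ with $(a',b')=(e,db)$ to get the explicit basis $\tilde{u}_i^N$, verify $G$-stability, and invoke \Cref{lem:fake_OS_space} before matching eigenvalues with the fake tensor power. The only (harmless) difference is that you compute the action of a general $g\in G$, whereas the paper only tracks the single element $\mathrm{diag}(\zeta_a,1,\dots,1)$ whose image generates $H$.
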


\begin{proof}
Note that for this choice of $G$ and $N$, with fundamental invariants defined as in \Cref{eq:infinite_invariants}, we have $\n_i=\g_i$ for $i=1,\dots,r-1$, and $\n_r=(x_1\cdots x_r)^e$ while $\g_r=(x_1\cdots x_r)^a$. The matrix $\mathrm{diag}(\zeta_a,1,\dots,1)\in G$ maps to the cyclic generator $c\in C_d\simeq H$, acting trivially on $\spn_\CC\{\n_1,\dots,\n_{r-1}\}$ and mapping $c:\n_r\mapsto\zeta_d\n_r$, where $\zeta_d:=\zeta_a^{-e}$ is a primitive $d$-th root of unity.

On the other hand, we see that the space $\tilde{U}^N_\sigma=\spn\{\tilde{u}_1^N,\dots,\tilde{u}_r^N\}$ constructed in \Cref{thm:infinite_exponents} is $G$-stable and the $\tilde{u}_i^N$ form a homogeneous basis for $(S(V^*)\otimes\Vsd)^N$ as a free $S(V^*)^N$-module, and therefore it is isomorphic to $\UNsd$ as a graded $G$-module by \Cref{lem:fake_OS_space}. We see that $\mathrm{diag}(\zeta_a,1,\dots,1)\in G$ acts trivially on $\spn_\CC\{\tilde{u}_1^N,\dots,\tilde{u}_{r-1}^N\}$ and maps $c:\tilde{u}_r^N\mapsto \zeta_a^{-e\left\lceil\frac{s}{e}\right\rceil}\tilde{u}_r^N=\zeta_d^{\left\lceil\frac{s}{e}\right\rceil}\tilde{u}_r^N$. \end{proof}

Note that we do not necessarily have that $\lceil \frac{s}{e}\rceil$ is relatively prime to $d$. When $d=a$ so that $e=1$ and $N=G(ab,ab,r)$, $G/N\simeq C_a=\langle c\rangle$ acts by $c: u_r^N \to \zeta_a^{-s} u_r^N,$ so that $\UNs \simeq E^\sigma$.  Another special case occurs when $s<e$, so that $\UNs \simeq E$.

In the following result, we identify the representation $\UNs$ when $G=(2a,2,2)$ and $N=G(a,d,2)$ as in \Cref{thm:classification}($2$.c). In this case it is not possible to write $\UNs$ as a fake tensor power (\Cref{def:fake_tensor}) of $\VN$ in general.

\begin{proposition}\label{prop:infinite_normal_3} Let $a=de$ and $N=G(a,d,2)\triangleleft G(2a,2,2)=G$, and fix $\sigma:\zeta_{2a}\mapsto\zeta_{2a}^s$ for $1\leq s<2a$ with $\mathrm{gcd}(s,2a)=1$. Then $H\simeq C_2\times C_d$ and $\VN\simeq \lambda_2\oplus\lambda_d$, where $\lambda_2$ is the standard reflection representation of $C_2$ and $\lambda_d$ is the standard reflection representation of $C_d$, and $\UNs\simeq \lambda_2^{\otimes\left\lceil\frac{s}{a}\right\rceil}\oplus \lambda_d^{\otimes \left\lceil\frac{s}{e}\right\rceil}$ as $H$-modules.
\end{proposition}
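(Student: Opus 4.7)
The plan is to apply Lemma 3.6 in the same spirit as the proofs of Propositions 7.4 and 7.6: exhibit a $G$-stable basis of $(S(V^*)\otimes \Vsd)^N$ as a free module over $S(V^*)^N$, so that the graded $G$-module $\UNsd$ is determined up to isomorphism by that basis (and then dualized).

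First I would identify the fundamental $N$-invariants as $\nu_1 := (x_1 x_2)^e$ (of $\mathbf{x}$-degree $2e$) and $\nu_2 := x_1^a + x_2^a$ (of $\mathbf{x}$-degree $a$), so that $S(V^*)^N = \mathbb{C}[\nu_1, \nu_2]$ and $\VN^* = \mathrm{span}_\mathbb{C}\{\nu_1, \nu_2\}$. To identify $H$ and its generators I would take $g := \mathrm{diag}(\zeta_{2a}, \zeta_{2a}^{-1})$ as generator of the $C_2$-factor (since $g^2 \in N$ but $g \notin N$) and $h := \mathrm{diag}(1, \zeta_a)$ as generator of the $C_d$-factor (since $h^d \in N$ but $h \notin N$ when $d > 1$). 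Direct computation yields $g \cdot \nu_1 = \nu_1$, $g \cdot \nu_2 = -\nu_2$, $h \cdot \nu_1 = \zeta_d^{-1}\nu_1$, $h \cdot \nu_2 = \nu_2$; dualizing then establishes $\VN \simeq \lambda_2 \oplus \lambda_d$, with $C_2$ acting faithfully on the line dual to $\mathbb{C}\nu_2$ and $C_d$ acting faithfully on the line dual to $\mathbb{C}\nu_1$.

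Next I would apply Theorem 7.3 to the reflection group $N = G(a,d,2)$ itself (viewing it as $G(eb_N, b_N, 2)$ with $b_N = d$), working with the representative $s' := s \bmod a \in \{1,\ldots,a-1\}$. This is sufficient because $N$ acts on $\Vs$ only through the restriction $\sigma|_N$, which is determined by $s \bmod a$. The theorem produces explicit homogeneous polynomials $\tilde{u}_1^N, \tilde{u}_2^N$ forming a basis of $(S(V^*)\otimes \Vsd)^N$ as a free $S(V^*)^N$-module, with $\mathbf{x}$-degrees equal to the $\Vs$-exponents of $N$. I would verify that their span is $G$-stable so that Lemma 3.6 applies, and then read off the $H$-module structure by computing the eigenvalues of $g$ and $h$ on each $\tilde{u}_i^N$ and dualizing.

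The main obstacle is the bookkeeping when $s > a$: as $N$-modules $\Vs$ and $V^{\sigma'}$ (with $\sigma' : \zeta_{2a} \mapsto \zeta_{2a}^{s'}$) are isomorphic, but as $G$-modules they differ by tensoring with a sign character of $H$ that detects the coset of $g$. Tracking this sign carefully through the $g$-eigenvalues of the $\tilde{u}_i^N$, and combining with the $h$-eigenvalues that follow immediately from the $\sigma|_N$-exponents appearing in Theorem 7.3, should produce the ceilings $\lceil s/a \rceil$ (from the $C_2$-factor, recording whether $s$ exceeds $a$) and $\lceil s/e \rceil$ (from the $C_d$-factor, recording the $\zeta_d$-eigenvalue of $h$) appearing in the claimed decomposition.
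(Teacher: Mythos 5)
Your setup agrees with the paper's own proof: the same fundamental $N$-invariants (with the labels swapped), valid coset representatives for the two cyclic factors of $H$ (your $\mathrm{diag}(\zeta_{2a},\zeta_{2a}^{-1})$ lies in the same $N$-coset as the antidiagonal generator the paper uses, their ratio being the transposition matrix, which lies in $N$), and the decomposition $\VN\simeq\lambda_2\oplus\lambda_d$. On the key point your plan is actually \emph{more} careful than the paper's: the paper substitutes $s$ directly into the formulas of \Cref{thm:infinite_exponents} for $N=G(a,d,2)$, outside their range of validity when $s>a$, and the resulting $\tilde{u}_2^N$ is then \emph{not} a free generator --- its degree $2e\lceil s/e\rceil-s$ exceeds the true exponent $2e\lceil s'/e\rceil-s'=2e\lceil s/e\rceil-s-a$ by $a$, and in fact $\tilde{u}_2^N=\n_1u_2-\n_2^{\lceil s'/e\rceil}u_1$ for a genuine free basis $u_1=\tilde{u}_1^N$, $u_2$ (the second generator built from $s'$), so all its coefficients lie in $\IN$ and $(\eta_N\otimes 1)(\tilde{u}_2^N)=0$. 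Reducing to $s'=s\bmod a$, where \Cref{thm:infinite_exponents} legitimately applies, and then twisting by the character $\epsilon$ with $\Vs\simeq V^{\sigma'}\otimes\epsilon$ is the right repair.

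The gap is that this deferred bookkeeping, done honestly, does not terminate at the stated formula. The character $\epsilon$ sends a monomial matrix with entries $\zeta_{2a}^{k_1},\zeta_{2a}^{k_2}$ to $(-1)^{k_1}=(-1)^{k_2}$; it is trivial on $N$ and on $h$ and equals $-1$ on $g$, so $\epsilon=\lambda_2$ as a character of $H$, and since it twists \emph{both} free generators one obtains
\[
\UNs\ \simeq\ \lambda_2^{\otimes\lceil s/a\rceil}\ \oplus\ \lambda_2^{\otimes\left(\lceil s/a\rceil+1\right)}\otimes\lambda_d^{\otimes\lceil s/e\rceil},
\]
which coincides with the Proposition only when $s<a$; for $a<s<2a$ the second summand carries an extra $\lambda_2$. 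Concretely, take $G=G(6,2,2)\triangleright N=G(3,1,2)$ (so $a=e=3$, $d=1$, $H\simeq C_2$) and $s=5$: the Proposition predicts the trivial $H$-module, but $e_i^G(\Vs)=\{1,7\}$ and $e_i^N(\Vs)=\{1,4\}$ force $e_i^G(\UNs)=\{0,3\}$ by \Cref{thm:numbers}, and a character appearing in degree $3$ of $\CG$ cannot be trivial, so $\UNs$ contains a nontrivial character of $H$. Thus your plan as written cannot close: carrying out the sign-tracking faithfully surfaces a correction to the statement itself rather than a proof of it, and any bookkeeping that lands exactly on the claimed formula for $s>a$ must contain an error.
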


\begin{proof} We may choose the fundamental $N$-invariants \[\n_1(\mathbf{x})=x_1^a+x_2^a \qquad\text{and}\qquad \n_2(\mathbf{x})=(x_1x_2)^e\] as in \Cref{eq:infinite_invariants}. Letting \[h_1:=\begin{pmatrix}0&\zeta_{2a}\\\zeta_{2a}^{-1} & 0\end{pmatrix}\qquad\text{and}\qquad h_2:=\begin{pmatrix}\zeta_a&0\\0&1\end{pmatrix}.\] We see that $h_i(\n_j)=\n_j$ if $i\neq j$, $h_1(\n_1)=-\n_1$, and $h_2(\n_2)=\zeta_d\n_2$, and therefore $h_1$ and $h_2$ map to the generators of the cyclic factors in the bicyclic quotient group $H\simeq C_2\times C_d$. By \Cref{thm:infinite_exponents,lem:fake_OS_space}, $\tilde{U}^N_\sigma=\spn\{\tilde{u}_1^N,\tilde{u}_2^N\}$ is isomorphic to $\UNsd$ as a graded $G$-module, where \begin{align*}\tilde{u}_1^N&=x_1^{\left\lceil\frac{s}{a}\right\rceil a -s}\otimes x_1^\sigma + x_2^{\left\lceil\frac{s}{a}\right\rceil a -s}\otimes x_2^\sigma;\qquad \text{and}\\ \tilde{u}_2^N&=(x_1x_2)^{\left\lceil \frac{s}{e}\right\rceil e}x_1^{-s}\otimes x_1^\sigma + (x_1x_2)^{\left\lceil \frac{s}{e}\right\rceil e}x_2^{-s}\otimes x_2^\sigma. \end{align*} We see that $h_i(\tilde{u}^N_j)=\tilde{u}^N_j$ if $i\neq j$, $h_1\tilde{u}_1^N=(-1)^{\left\lceil\frac{s}{a}\right\rceil}\tilde{u}_1^N$, and $h_2(\tilde{u}^N_2)=\zeta_d^{\left\lceil\frac{s}{e}\right\rceil}\tilde{u}_2^N$. \end{proof}

As before, note that we do not necessarily have that $\left\lceil\frac{s}{e}\right\rceil$ is relatively prime to $d$. When $d=a$ so that $e=1$, we have that $\UNs\simeq\VNs$ whenever $s<a$. When $d=1$ so that $e=a$, we do have $\UNs\simeq\VN^{\boxtimes\left\lceil\frac{s}{a}\right\rceil}$ as in the previous cases. Another special case occurs when $s<e$, so that $\UNs\simeq\VN$.

\subsection{\texorpdfstring{An exceptional example $G_{15}\label{sec:exceptional_example} \triangleright G_{12}$}{An exceptional example G12 in G15}}
To illustrate the choice of indexing of degrees and exponents that is needed in~\Cref{thm:numbers}, consider \[G:= G_{15}=\left\langle \left(\begin{array}{cc}1 & 0\\0 & -1\end{array}\right),\frac{\zeta_3}{2}\left(\begin{array}{cc}-1-i & 1-i\\-1-i & -1+i\end{array}\right),\frac{1}{\sqrt{2}}\left(\begin{array}{cc}1 & -1\\-1 & -1\end{array}\right) \right\rangle,\] where we use the conventions in~\cite[Chapter 3]{lehrer2009unitary} and the given matrices describe the action of $G$ on $V^*$. We will write the reflection generators of this non-well-generated reflection group $G$ as $\mathbf{s}$, $\mathbf{t}$, and $\mathbf{u}$, in the order they appear above. The degrees of $G$ are $d_1^G=12$ and $d_2^G=24$, with corresponding invariant generators of $S(V^*)^G$ given by \[G_1(x,y)=(x^5y - xy^5)^2 \qquad\text{and}\qquad G_2(x,y)=(x^8 + 14x^4y^4 + y^8)^3.\]

We then have the following normal reflection subgroup $N\trianglelefteq G$ generated by the reflections $\mathbf{u}$, $\mathbf{sus}^{-1}$, and $\mathbf{tsus}^{-1}\mathbf{t}^{-1}$:
\[N:= G_{12}=\left\langle\frac{1}{\sqrt{2}}\left(\begin{array}{cc}1 & -1\\-1 & -1\end{array}\right),\frac{1}{\sqrt{2}}\left(\begin{array}{cc}1 & 1\\1 & -1\end{array}\right),\left(\begin{array}{cc}0 & \zeta_8\\\zeta_8^{-1} & 0\end{array}\right) \right\rangle,\]
where again the above matrices describe the acion of $N$ on its reflection representation $V^{*}$. The degrees of $N$ are $d_1^N=6$ and $d_2^N=8$, with corresponding invariant generators of $S(V^*)^N$ given by \[N_1(x,y)=x^5y - xy^5 \qquad\text{and}\qquad N_2(x,y)=x^8 + 14x^4y^4 + y^8.\]

Then the quotient group $H:=G/N\simeq C_2 \times C_3$, and by~\Cref{thm:reflection_action} $H$ acts in a reflection representation on $\VN$, where $\VNd=\spn_\CC\{N_1,N_2\}$, since 
\begin{alignat*}{2}
\mathbf{s}(N_1)&=-N_1, \hspace{1em} &  \mathbf{s}(N_2)&=N_2\\
\mathbf{t}(N_1)&=N_1,\hspace{1em} &  \mathbf{t}(N_2)&=\zeta_3^2 N_2\\
\mathbf{u}(N_1)&=N_1, \hspace{1em}&  \mathbf{u}(N_2)&=N_2.
\end{alignat*}
Compatibly with the choices of invariant generators $\g_1$ and $\g_2$ for $G$ and $\n_1$ and $\n_2$ for $N$, we find invariant generators of $S(\VNd)^H$ that are $\mathbf{N}$-homogeneous of degrees $d_1^H=2$ and $d_2^H=3$:\[H_1(\n_1,\n_2)=N_1^2=\g_1(x,y) \qquad\text{and}\qquad H_2(\n_1,\n_2) = N_2^3=\g_2(x,y),\] and also $\mathbf{x}$-homogeneous of degrees $d_1^G=12=d_1^N\cdot d_1^H$ and $d_2^G=24=d_2^N\cdot d_2^H$.

As we can see from the explicit matrices above, the reflection representation of $G$ is actually defined over $\zeta_{24}$. For $s$ coprime to $24$ we write $\sigma_s$ for the Galois automorphism $\sigma_s\in\mathrm{Gal}(\mathbb{Q}(\zeta_{24})/\mathbb{Q})$ defined by $\sigma_s:\zeta_{24}\mapsto\zeta_{24}^s$. The complete data for every Galois twist $\Vs$ for $G=G_{15}$ and $N=G_{12}$ appears in~\Cref{table:example}.  Code for computing similar examples using Sage can be found at~\cite{nathancode}.

\begin{table}[htbp]
\[\begin{array}{|c|ccc|ccc|c|} \hline
s & \multicolumn{1}{|c@{\hspace*{\tabcolsep}\makebox[0pt]{$\cdot$}}}{d_i^N\!\!\!\!} & \multicolumn{1}{c@{\hspace*{\tabcolsep}\makebox[0pt]{$=$}}}{\! e_i^H(\VNs)} & e_i^G(\VNs)\! & \multicolumn{1}{|c@{\hspace*{\tabcolsep}\makebox[0pt]{$+$}}}{\!e_i^N(\Vs)} & \multicolumn{1}{c@{\hspace*{\tabcolsep}\makebox[0pt]{$=$}}}{e_i^G(\UNs)} & e_i^G(\Vs)\! &\!\! \sum\limits_{g \in G}\!\!\left( \prod\limits_{\lambda_i(g) \neq 1}\!\!\! \frac{1-\lambda_i(g)^\sigma}{1-\lambda_i(g)}\!\!\right) \!q^{\fix_V (g)} t^{\fix_{E} (g)}\!\! \\ \hline
1 & 6,8 & 1,2 & 6,16 & 5,7 & 6,16 & 11,23 & (qt+5t+6)(qt+7t+16)\\ 
5 & 6,8 & 1,1 & 6,8 & 1,11 & 6,8 & 7,19 & (qt+t+6)(qt+11t+8)\\
7 & 6,8 & 1,2 & 6,16 & 11,1 & 6,16 & 17,17 & (qt+11t+6)(qt+t+16)\\
11 & 6,8 & 1,1 & 6,8 & 7,5 & 6, 8 & 13,13 & (qt+7t+6)(qt+5t+8)\\
13 & 6,8 & 1,2 & 6,16 & 11,1 & 0,22 & 11,23 & (qt+11t)(qt+t+22)\\ 
17 & 6,8 & 1,1 & 6,8 & 7,5 & 0,14 & 7,19 & (qt+7t)(qt+5t+14)\\
19 & 6,8 & 1,2 & 6,16 & 5,7 & 0,22 & 5,29 &(qt+5t)(qt+7t+22)\\ 
23 & 6,8 & 1,1 & 6,8 & 1,11 & 0,14 & 1,25 & (qt+t)(qt+11t+14)\\ \hline
\end{array}\]

\caption{Data for $G=G_{15}$, $N=G_{12}$, and $H=G/N=C_2\times C_3$ computed using~\cite{nathancode}.  The rows are indexed by the Galois twists $\sigma_s:\zeta_{24}\to\zeta_{24}^s$ (for $s$ coprime to $24$). The columns contain the degrees of $N$ multiplied by the $\VNs$-exponents of $H$ to obtain the $\VNs$-exponents of $G$, and the $\Vs$-exponents of $N$ added to the $\UNs$-exponents of $G$ to obtain the $\Vs$-exponents of $G$, indexed according to~\Cref{thm:numbers}. The final column lists the corresponding factorization of the weighted sum over $G$.}
\label{table:example}
\end{table}

Let us work out the case $\sigma=\sigma_{13}$ in detail; the other cases are similar. Writing $x^\sigma$ and $y^\sigma$ for the basis vectors of $\Vsd$ as before, we find bases $\{u_1^G,u_2^G\}$ for $\UGsd$ and $\{u_1^N,u_2^N\}$ for $\UNsd$ given~by:
\begin{alignat*}{2}
u_1^G&=(x^{11} - 22 x^7 y^4 - 11 x^3 y^8) \otimes x^\sigma \\
&+(y^{11} - 22 x^4 y^7 - 11 x^8 y^3) \otimes y^\sigma ;\\
u_2^G&=\left((x^{21}y^2- xy^{22}) + 27(x^{17}y^6-x^5y^{18}) + 170(x^{13}y^{10} - x^9 y^{14})  \right)\otimes x^\sigma \\
&+\left((x^2y^{21}- x^{22}y) + 27(x^6y^{17}- x^{18}y^5) +170(x^{10}y^{13}- x^{14}y^9)\right)\otimes y^\sigma;\\
u_1^N&=u_1^G; \text{ and }\\
u_2^N&=y\otimes x^\sigma - x\otimes y^\sigma.
\end{alignat*}
Writing $u_i^G=a_{i1}^G\otimes X^\sigma+a_{i2}^G\otimes y^\sigma$ and $u_i^N=a_{i1}^N\otimes x^\sigma+a_{i2}^N\otimes y_\sigma$ as above, let us verify that $a_{ij}^G\in\CG$ and $a_{ij}^N\in\CN$. It is clear that $a_{11}^G,a_{12}^G\in\CG$, because $\mathrm{deg}(a_{ij}^G)=1<d_1^G,d_2^G$, and every polynomial of degree smaller than every degree of $G$ belongs to $\CG$. Similarly, we see that $a_{21}^N,a_{22}^N\in\CN$, since $\mathrm{deg}(a_{2j}^N)=1<d_1^N,d_2^N$. We observe that $u_2^G=\n_1\n_2^2\cdot u_2^N$ (we discuss the meaning of this observation in more detail below), and therefore $u_2^G\in\CG\otimes\Vsd\simeq\CH\otimes\CN\otimes\Vsd$ by \Cref{prop:coinvariant_decomposition}. To see that $u_1^N\in\CN\otimes \Vsd$ also, suppose that $w_1^N,u_2^N$ is a homogeneous basis for $\UNsd$, and let $p_1,p_2\in S(V^*)^N$ be homogeneous such that $u_1^N=p_1w_1^N+p_2u_2^N$. But then $p_2=0$, since no homogeneous $N$-invariant polynomial has degree $10$, which implies that $a_{1j}^N$ is divisible by a homogeneous $N$-invariant polynomial $p_1$ with $\mathrm{deg}(p_1)\leq 11$. But the only non-constant choices are $p_1=\n_1$ or $p_1=\n_2$, none of which divide the coefficients $a_{1j}^N$ above, and therefore $p_1\in\CC^\times$. This concludes the proof that the $u_i^G$ and $u_i^N$ specified above are indeed bases for $\UGsd$ and $\UNsd$, respectively, as claimed.

Hence, the $\Vs$-exponents of $G$ are $e_1^G(V^\sigma)=11$ and $e_2^G(V^\sigma)=23$, and the $\Vs$-exponents of $N$ are $e_1^N(V^\sigma)=11$ and $e_2^N(V^\sigma)=1$. We compute the action of $G$ on $\UNsd$ and obtain
\begin{align*}
g(u_1^N)&=u_1^N \text{ for all } g \in G,\\
\mathbf{s}(u_2^N)&=-u_2^N, \\
\mathbf{t}(u_2^N)&=\zeta_3^2 u_2^N, \text{ and }\\
\mathbf{u}(u_2^N)&=u_2^N.
\end{align*}

The resulting $G$-module structure on $\UNs$ yields the direct sum of the trivial representation with the unique 1-dimensional representation of $G$ with fake degree $q^{22}$, which is $G$-isomorphic to the $\mathbb{C}$-span of the $G$-harmonic semi-invariant homogeneous polynomial \[a_{22}^H:=(x^{21}y- xy^{21}) + 27(x^{17}y^5-x^5y^{17}) + 170(x^{13}y^9 - x^9y^{13})\in\CG\subset S(V^*),\] so that a basis for $(\CG\otimes\UNsd)^G$ is given by $\{1\otimes u_1^N,a_{22}^H\otimes u_2^N\}$.

On the other hand, the $H$-module structure on $\UNs$ yields the direct sum of the trivial representation with the $1$-dimensional representation of $H$ with fake degree $q^3$ given by the $\mathbb{C}$-span of the $H$-harmonic semi-invariant $\mathbf{N}$-homogeneous polynomial \[a_{22}^H=\n_1\n_2^2\in\CH\subset S(\VNd),\] so that a basis for $(\CH\otimes\UNsd)^H$ is given by $\{1\otimes u_1^N,\n_1\n_2^2\otimes u_2^N\}$.

Thus we witness the general isomorphism \[(\CG\otimes\Vsd)^G\simeq(\CH\otimes\UNsd)^H\] from the proof of \Cref{cor:general_numerology} in this example, since we obtain $a_{22}^H\otimes u_2^N\mapsto u_2^G$ by collapsing the first tensor in
\begin{align*}
a_{22}^H\otimes u_2^N&=(x^5y-xy^5)(x^8+14x^4y^4+y^8)^2\otimes(y\otimes x^\sigma-x\otimes y^\sigma)\in(\CH\otimes\UNsd)^H\\
&\mapsto (x^5y-xy^5)(x^8+14x^4y^4+y^8)^2(y\otimes x^\sigma-x\otimes y^\sigma)=u_2^G\in(\CG\otimes\Vsd)^G.
\end{align*}

\section{Acknowledgements}
We thank Theo Douvropoulos for many helpful remarks and suggestions.

\providecommand{\bysame}{\leavevmode\hbox to3em{\hrulefill}\thinspace}
\providecommand{\MR}{\relax\ifhmode\unskip\space\fi MR }
\providecommand{\MRhref}[2]{%
  \href{http://www.ams.org/mathscinet-getitem?mr=#1}{#2}
}
\providecommand{\href}[2]{#2}


\begin{thebibliography}{{The}18}

\bibitem[AW20a]{arreche2020normal}
Carlos Arreche and Nathan Williams, \emph{{N}ormal {R}eflection {Subgroups}},
  DMTCS Proceedings (Formal Power Series and Algebraic Combinatorics), vol.
  2020, 2020, p.~(to appear).

\bibitem[AW20b]{nathancode}
\bysame, \emph{Normal reflection subgroups},
  \url{https://cocalc.com/share/67b57b8d53e5ef21bd2dcb88ea7ece7ccfee23e2/Normal\%20Reflection\%20Subgroups.sagews?viewer=share},
  2020 (accessed 26 June, 2020).

\bibitem[BBR02]{bessis2002quotients}
David Bessis, C{\'e}dric Bonnaf{\'e}, and Rapha{\"\i}l Rouquier,
  \emph{Quotients et extensions de groupes de r{\'e}flexion}, Mathematische
  Annalen \textbf{323} (2002), no.~3, 405--436.

\bibitem[BD10]{bonnafe2010semidirect}
C{\'e}dric Bonnaf{\'e} and Matthew~J Dyer, \emph{Semidirect product
  decomposition of {C}oxeter groups}, Communications in Algebra \textbf{38}
  (2010), no.~4, 1549--1574.

\bibitem[BG94]{barcelo1994combinatorial}
H{\'e}l{\`e}ne Barcelo and Alain Goupil, \emph{Combinatorial aspects of the
  {P}oincar{\'e} polynomial associated with a reflection group}, Contemporary
  Mathematics \textbf{178} (1994), 21--43.

\bibitem[BLM06]{bonnafe2006twisted}
C{\'e}dric Bonnaf{\'e}, GI~Lehrer, and Jean Michel, \emph{Twisted invariant
  theory for reflection groups}, Nagoya Mathematical Journal \textbf{182}
  (2006), 135--170.

\bibitem[Che50]{chevalley1950betti}
Claude Chevalley, \emph{The {B}etti numbers of the exceptional simple {L}ie
  groups}, Proceedings of the international congress of Mathematicians,
  Cambridge, Mass, vol.~2, 1950, pp.~21--24.

\bibitem[Che55]{chevalley1955invariants}
\bysame, \emph{Invariants of finite groups generated by reflections}, American
  Journal of Mathematics \textbf{77} (1955), no.~4, 778--782.

\bibitem[Cox51]{coxeter1951product}
H.S.M Coxeter, \emph{The product of the generators of a finite group generated
  by reflections}, Duke Mathematical Journal \textbf{18} (1951), no.~4,
  765--782.

\bibitem[Gal05]{gal2005normal}
{\'S}wiatos{\l}aw Gal, \emph{On normal subgroups of {C}oxeter groups generated
  by standard parabolic subgroups}, Geometriae Dedicata \textbf{115} (2005),
  no.~1, 65--78.

\bibitem[Hop64]{hopf1964topologie}
Heinz Hopf, \emph{{\"U}ber die topologie der gruppen-mannigfaltigkeiten und
  ihrer verallgemeinerungen}, Selecta Heinz Hopf, Springer, 1964, pp.~119--151.

\bibitem[LT09]{lehrer2009unitary}
Gustav~I Lehrer and Donald~E Taylor, \emph{Unitary reflection groups}, vol.~20,
  Cambridge University Press, 2009.

\bibitem[Mic]{micheltable}
Jean Michel, \emph{Une table des groupes de r\'eflexions complexes},
  \url{https://webusers.imj-prg.fr/~jean.michel/papiers/table.pdf}, Accessed:
  Feb 12, 2019.

\bibitem[OS80]{orlik1980unitary}
Peter Orlik and Louis Solomon, \emph{Unitary reflection groups and cohomology},
  Inventiones mathematicae \textbf{59} (1980), no.~1, 77--94.

\bibitem[Ree95]{reeder1995cohomology}
Mark Reeder, \emph{On the cohomology of compact {L}ie groups}, Enseignement
  Mathematique \textbf{41} (1995), 181--200.

\bibitem[RS19]{reiner2019invariant}
Victor Reiner and Anne~V Shepler, \emph{Invariant derivations and differential
  forms for reflection groups}, Proceedings of the London Mathematical Society
  \textbf{119} (2019), no.~2, 329--357.

\bibitem[Sol63]{solomon1963invariants}
Louis Solomon, \emph{Invariants of finite reflection groups}, Nagoya
  Mathematical Journal \textbf{22} (1963), 57--64.

\bibitem[ST54]{shephard1954finite}
Geoffrey~C Shephard and John~A Todd, \emph{Finite unitary reflection groups},
  Canad. J. Math \textbf{6} (1954), no.~2, 274--301.

\bibitem[{The}18]{sagemath}
{The Sage Developers}, \emph{{S}agemath, the {S}age {M}athematics {S}oftware
  {S}ystem ({V}ersion 8.6)}, 2018, {\tt https://www.sagemath.org}.

\bibitem[Wil19]{williams2019reflexponents}
Nathan Williams, \emph{Reflexponents}, arXiv preprint arXiv:1902.06264 (2019).

\end{thebibliography}
\end{document}